\newcommand{\Ker}{\mathrm{Ker}}
\newcommand{\Rad}{\mathrm{Rad}}
\newcommand{\lex}{\,\overrightarrow{\times}\,}
\begin{document}
\newtheorem{theorem}{Theorem}[section] \newtheorem{lemma}[theorem]{Lemma}%[section]
\newtheorem{definition}[theorem]{Definition}%[section]
\newtheorem{example}[theorem]{Example}%[section]
\newtheorem{remark}[theorem]{Remark}%[section]
\newtheorem{corollary}[theorem]{Corollary}%[section]
\newtheorem{proposition}[theorem]{Proposition}
\newtheorem{problem}[theorem]{Problem}
\baselineskip 0.24in
\title{\Large\bf Discrete $(n+1)$-valued states and $n$-perfect  pseudo-effect algebras}
\author{{Anatolij Dvure\v{c}enskij$^{1,2}$, Yongjian Xie$^{1,3}$\thanks{E-mail: yjxie@snnu.edu.cn},\ \ and
 Aili  Yang$^{4}$}\\ {}\\
 {\small $^1$Mathematical Institute, Slovak Academy of Sciences, \v{S}tef\'{a}nikova 49, SK-814 73 Bratislava, Slovakia }\\
 {\small $^2$Depar.
Algebra  Geom.,  Palack\'{y} Univer.,
CZ-771 46 Olomouc, Czech Republic,  dvurecen@mat.savba.sk}\\
 {\small $^3$College of Mathematics and Information  Science, Shaanxi Normal University, Xi'an, 710062, China}\\
  {\small $^4$College of Science, Xi'an University of Science and Technology, Xi'an, 710054, China}
 }

\date{}
\maketitle
\begin{center}
\begin{minipage}{140mm}

\begin{abstract}
We give  sufficient and necessary conditions  to guarantee that a pseudo-effect algebra
admits an $(n+1)$-valued discrete state. We introduce $n$-perfect pseudo-effect algebras as algebras which can be split into $n+1$ comparable
slices. We prove that the category of strong $n$-perfect pseudo-effect algebras is categorically equivalent to the category
of torsion-free directed partially ordered groups of a special type.
\end{abstract}

 \vskip 2mm \noindent{\bf Keywords}:  Pseudo-effect algebra; po-group; symmetric pseudo-effect algebra; $(n+1)$-valued discrete state;
$n$-perfect pseudo-effect algebra; lexicographic product;
categorical equivalence

 \vskip 2mm \noindent{\bf MSC2000}: 03G12; 03B50; 08A55; 06B10

\end{minipage}
\end{center}

\section  {Introduction and basic definitions}%1

Effect algebras were introduced by Foulis and Bennett in 1994 for modeling unsharp measurements of a quantum mechanical system
\cite{FoBe94}. They are a generalization of many structures which arise in quantum mechanics, in particular of orthomodular lattices
in noncommutative measure theory and of MV-algebras in fuzzy measure theory \cite{DvPu00, Kalmach83, Hajek03}. Alternative structures called
difference posets, which are categorically equivalent with effect algebras, were introduced in \cite{KoCh94}. At the end of the 90's,
a
noncommutative version of MV-algebras, called pseudo-MV-algebras,  was introduced in \cite{GeIo01} and independently in \cite{Rac02} as generalized MV-algebras, GMV-algebras.  A noncommutative generalization of effect algebras, called pseudo-effect algebras, PEAs, was introduced and studied in \cite{DvVe01a,DvVe01b}.

Perfect MV-algebras, introduced by Belluce, Di Nola and Lettieri in \cite{BDL93}, may be viewed as the most compelling examples of
non-Archimedean MV-algebras, in such a sense that  they are generated by their infinitesimals. In a perfect MV-algebra, every
element belongs either to its radical or its coradical. In \cite{DL94}, it is shown that for any  perfect MV-algebra $M$, there exists an Abelian unital $\ell$-group $G$ (lattice-ordered group), such that $M$ is isomorphic to an interval of the lexicographic product
of the group of integers $\mathbb{Z}$ and $G.$

Especially, MV-algebras are lattice-ordered effect algebras satisfying the Riesz Decomposition Property (RDP). For any effect algebra $E$ with (RDP), there exists a partially ordered group $G$ with a strong unit $u$  such that $E$ is isomorphic to the effect algebra $\Gamma(G,u):=[0,u],$ where the effect algebra operations are the group additions existing in $[0,u].$ In \cite{Dv07},  Dvure\v{c}enskij introduced  perfect effect algebras, which are one kind of effect algebras admitting (RDP), and he proved that every perfect algebra is an interval in the lexicographical product of the group of integers with an Abelian directed  partially ordered group with interpolation. Moreover,
Dvure\v{c}enskij showed that the category of perfect effect algebras is categorically equivalent to the category of Abelian directed
partially ordered groups with interpolation.

The principal result on a representation of GMV-algebras says that every GMV-algebra is always  an interval in a
unital $\ell$-group \cite{Dv02}, i.e. an $\ell$-group with strong unit. This result provides a new bridge between different research areas, including GMV-algebras, unital
$\ell$-groups, noncommutative many valued logic, soft computing and quantum structures \cite{DvPu00}. Especially, using this result, perfect GMV-algebras and $n$-perfect GMV-algebras were introduced in \cite{DDT,Dv08}.  Furthermore, the author proved that any  $n$-strong perfect GMV-algebra is always an interval in the lexicographical product of the group of integers  $\mathbb{Z}$ with an $\ell$-group.

The notion of a state, as an analogue of a probability measure, is crucial for quantum mechanical measurements. Therefore, a special attention is done in order to exhibit whether does a state exist for the studied structure and if yes, what are its basic properties. In any Boolean algebra, we have a lot of two-valued states, and in general, every two valued state is extremal. Every perfect MV-algebra or every perfect effect algebra admits only a two-valued state. On the other hand, in the orthodox example of quantum mechanics, see e.g. \cite{93}, the system $L(H)$ of all closed subspaces of a Hilbert space $H$, $\dim H\ge 3$, does not admit any two-valued state. A two-valued state is a special case of an $(n+1)$-valued discrete state, and in this paper we concentrate to the existence of such states. We recall that, if $\dim H=n,$ then $L(H)$ admits a unique  $(n+1)$-valued  discrete state $s$, namely $s(M)= \dim M/n,$  $M \in L(H).$  In general, discrete states are of particular importance in many areas of mathematics, see \cite{Good86}.  For example, discrete states were used in \cite[Thm 3.2]{209} in order to completely  describe monotone $\sigma$-complete effect algebras.

GMV-algebras are also PEAs,  and so, $n$-perfect GMV-algebras are PEAs. Any $n$-perfect GMV-algebra  admits an $(n+1)$-valued discrete state. The main accent of the present paper is done to the study of $(n+1)$-valued discrete states on PEAs and to show how they are related with $n$-perfect PEAs.

Therefore, we start with the study of the structure of PEAs admitting a two-valued state. Then we present one characterization of PEAs admitting an $(n+1)$-valued discrete state. Later, we give a definition of $n$-strong perfect PEAs, and we prove that any $n$-strong perfect GMV-algebra is always an interval in the lexicographical product of the group of integers   $\mathbb{Z}$ with a torsion-free partially ordered group $G$ such that $\mathbb Z\lex G$ satisfies a special type of the Riesz Decomposition Property. Finally, the relationships between the category of $n$-strong perfect PEAs and the category of torsion-free directed partially ordered groups of a special type are discussed.

The paper is organized as follows. Some basic definitions and properties about pseudo-effect algebras  are presented in Section 2. In
Section 3, we study the structure of pseudo-effect algebras with two-valued states. In Section 4, we give  sufficient and
necessary conditions to guarantee that a pseudo-effect algebra
admits an $(n+1)$-valued discrete state. In Section 5, we introduce the class of $n$-perfect pseudo-effect algebras.  In Section 6, we study the class of strong $n$-perfect pseudo-effect algebras and we prove that the category of strong $n$-perfect pseudo-effect algebras is
categorically equivalent to the category of torsion-free directed  partially ordered groups of a special type.

\section  {Basic definitions and facts}%2

In this section,  we give  basic definitions  and facts about pseudo-effect algebras which we will need in this paper.

\begin{definition}\label{def:GPEA} {\rm \cite{DvVe02} A structure  $(E;+,0),$  where $+$ is a partial binary
operation and 0 is a  constant, is called a {\it generalized pseudo-effect algebra}, or GPEA for short, if for all $a,b,c \in E,$ the following hold.
\begin{itemize}
\item[{\rm (GP1)}] $ a+ b$ and $(a+  b)+ c $ exist if and only if
$ b+ c$ and $a+ ( b+ c) $ exist, and in this case,
$(a+  b)+ c =a+ ( b+ c)$.

\item[{\rm (GP2)}] If $ a+ b$ exists, there are elements $d,e\in E$ such
that $a+ b=d+ a=b+ e$.

\item[{\rm (GP3)}] If $ a+ b$ and $a+ c $ exist and are equal, then
$b=c.$ If $b+ a$ and $c+ a $ exist and are equal, then $b=c.$

\item[{\rm (GP4)}] If $ a+ b$  exists and $a+ b=0$, then $a=b=0.$

\item[{\rm (GP5)}] $ a+ 0$ and $0+ a $ exist and both are equal to $a.$
\end{itemize}
}
\end{definition}

According to \cite{DvVe02},  we introduce a binary relation $\leqslant$ in a GPEA $E$. For $a,b\in E,$ we define $a\leqslant b$ if
and only if there is an element $c\in E$ such that $a+ c=b.$ Equivalently, there exists an element $d\in E$ such that $d+ a=b.$ Then $\leqslant$ is a partial order on $E$.

We introduce two partial binary operations $/$ and $\backslash$ on a GPEA $E$. For any $a,b\in E$, $a/ b$ is defined if and only if
$b\backslash a$ is defined if and only if $a\leqslant b$, and in such a case we have $(b\backslash a)+ a=a+ (a/ b)=b.$  Then $a=(b\backslash a)/b=b\backslash(a/ b).$

By  \cite{XieLi11}, a GPEA $E$ is called a {\it weakly commutative} GPEA if it satisfies the following condition:

(C)  for any $ a,b\in E, a+  b  $ exists  if and only if $b+ a$ exists .

\begin{definition}\label{def:pea}
{\rm  \cite{DvVe01a} A structure
$(E;+,0,1),$  where $+$ is a partial binary operation and 0 and 1 are constants, is called a {\it pseudo-effect algebra}, or PEA for short, if for all $a,b,c \in E,$ the following hold.
\begin{itemize}
\item[{\rm (PE1)}] $ a+ b$ and $(a+ b)+ c $ exist if and only if $b+ c$ and $a+( b+ c) $ exist, and in this case,
$(a+ b)+ c =a +( b+ c)$.

\item[{\rm (PE2)}] There are exactly one  $d\in E $ and exactly one $e\in E$ such
that $a+ d=e + a=1 $.

\item[{\rm (PE3)}] If $ a+ b$ exists, there are elements $d, e\in E$ such that
$a+ b=d+ a=b+ e$.

\item[{\rm (PE4)}] If $ a+ 1$ or $ 1+ a$ exists,  then $a=0$ .
\end{itemize}}
\end{definition}

Let  $a$ be an element of a PEA $E$ and $n\geqslant0$ be an integer.
We define $na=0$ if $n=0,$ $1a=a$ if $n=1,$ and $na=(n-1)a+ a$ if $(n-1)a$ and $(n-1)a+ a$ are defined in $E$.  We define the {\it isotropic index} $\imath(a)$ of the element $a$, as the maximal nonnegative number $n$ such that $na$ exists. If $na$ exists for
every  integer $n$, we say that $\imath(a)=+\infty.$ In the following, we denote by $\mbox{\rm Infinit}(E)=\{a\in E\mid \imath(a)=+\infty\}.$

We recall that if $(E;+,0,1)$ is a PEA, then $(E;+,0)$ is a GPEA. If $a+ b$ exists and $a+ b=1,$ then we write
$b^{-}=a,$ $a^{\sim}=b.$ Thus,  two mappings $a\mapsto a^{-}$ and $a\mapsto a^{\sim}$ are  unary operations  satisfying the following:

(i)\ if $ a\leqslant b,$ then $b^{-} \leqslant a^{-},$ $b^{\sim} \leqslant a^{\sim}$.

(ii)\ for any $a\in E,$ $a^{-}$$^\sim=a^{\sim}$$^{-}=a.$

Assume that $(G;+,\leqslant,0)$ is a  po-group (po-group for short), i.e. $G$ is a group written additively with a partial ordering $\leqslant$ such that $a\leqslant b$ implies $c+a+d\leqslant c+b+d$ for any $c,d \in G.$ A positive element $u \in G$ is said to be a {\it strong unit} if, given $g \in G$, there is an integer $n\ge 1$ such that $g\leqslant nu.$ The pair $(G,u)$ is said to be a {\it unital po-group}. A po-group is said to be {\it directed} if, $a,b \in G,$ there is an element $c\in G$ such that $a,b \leqslant c.$ For more about po-groups, see \cite{Gla}.

We denote by $G^+:=\{g\in G\mid g\geqslant 0\}.$ For any $x,y\in G^{+}$, let $x+y$ be the group addition of $x$ and $y.$ Then $(G^{+};+,0)$ is a generalized pseudo-effect algebra. We set
$\Gamma(G,u):=\{x\in G\mid 0\leqslant x\leqslant u\},$ and we endow $\Gamma(G,u)$ with the operation $+$ such that $a+ b$ is defined in $\Gamma(G,u)$ whenever  $a\leqslant u-b,$  and in such a case, $a+b$ in $\Gamma(G,u)$ is the group addition of $a$ and $b.$ Then $(\Gamma(G,u),+,0,u)$ is a pseudo-effect algebra. We say that a
PEA $E$ is an {\it interval} PEA if there exists a  po-group $G$ such that $E$ is isomorphic to a PEA $\Gamma(G,u)$ for some
strong unit $u\in G^{+}.$

We say that a PEA $E$ satisfies the {\it Riesz Decomposition Property} (RDP), if $a_1+a_2 =b_1+b_2$, there are four elements $c_{11}, c_{12}, c_{21}, c_{22}$ such that $a_1 = c_{11}+c_{12},$ $a_2 = c_{21}+c_{22},$ $b_1 = c_{11}+c_{21},$ and $b_2 = c_{21}+c_{22}.$  If, in addition, for all $x\leqslant c_{12}$ and $y\leqslant c_{21},$ we have $x+y$, $y+x$ exists in $E$ and $x+y=y+x,$ we say that $E$ satisfies (RDP)$_1$. By  \cite{DvVe01b}, every PEA $E$ with (RDP)$_1$ is an interval PEA.

A PEA $E$ satisfies  (RDP)$_0$  if, for any $a, b_1, b_2
\in E$ such that $a \leqslant b_1+b_2,$ there are $d_1, d_2 \in E$ such
that $d_1 \leqslant b_1$, $d_2 \leqslant b_2$ and $a = d_1+d_2$.

We recall that (RDP)$_1 \Rightarrow $ (RDP) $\Rightarrow$ (RDP)$_0$, but the converse is not true, see \cite{DvVe01b}.

Finally, we say  a directed po-group $G$  satisfies (RDP) or (RDP)$_1$ if the same property as for PEA holds also for the positive cone $G^+.$

We recall that according to \cite[Thm 5.7]{DvVe01b}, for any PEA $E$ with (RDP)$_1$ there is a unique (up to isomorphism of unital po-groups) unital po-group $(G,u),$ where $G$ satisfies (RDP)$_1$, such that $E\cong \Gamma(G,u).$ Moreover, there is a categorical equivalence between the category of PEAs with (RDP)$_1$ and the category of unital po-groups $(G,u),$ where $G^+$ satisfies (RDP)$_1$.

\begin{example}\label{ex:ex21}
{\rm Let $\mathbb{Z}$ be the group of integers and $G=\mathbb{Z}$$\times\mathbb{Z}$$\times\mathbb{Z}.$ Define for every two elements of $G$

\begin{displaymath}
(a,b,c)+ (x,y,z)=\left\{
\begin{array}{ll}
(a+x,b+y,c+z), & \textrm{$x$ is even,}\\
(a+x,c+y,b+z), & \textrm{$x$ is odd,}\qquad\qquad
\end{array} \right.
\end{displaymath}
and define $(a,b,c)\leqslant(x,y,z)$  if $a<x$ or
$a=x,b\leqslant y$ and $c\leqslant z$.

Then $(G;+,\leqslant)$ is a lattice-ordered group with  strong unit $u=(1,0,0),$  and $\Gamma(G,(1,0,0))$ is a PEA.}
\end{example}

\begin{proposition}\label{pr:complement}
{ Let  $E$ be a  PEA. Then $E$ is a weakly
commutative if and only if,  for any $a\in E,$ $a^{-}=a^{\sim}.$ }
\end{proposition}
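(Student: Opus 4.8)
The plan is to translate both the notion of weak commutativity and the condition $a^-=a^\sim$ into statements about the partial order $\leqslant$ and the two complementation maps, and then let antisymmetry do the work. The central tool I would first establish is the following existence criterion: for all $a,b\in E$, the sum $a+b$ is defined if and only if $b\leqslant a^\sim$, if and only if $a\leqslant b^-$. The point is that $1$ is the greatest element of $E$ (by (PE2) every $a$ admits $d$ with $a+d=1$, so $a\leqslant 1$), and that $a+a^\sim=1=b^-+b$. Hence, if $a+b$ is defined then $a+b\leqslant 1=a+a^\sim$ and $a+b\leqslant 1=b^-+b$; these two inequalities will yield $b\leqslant a^\sim$ and $a\leqslant b^-$ once a cancellation lemma is available. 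Conversely, if $b\leqslant a^\sim$, writing $a^\sim=b+t$ and using $a+a^\sim=1$ together with (PE1) shows that $a+b$ is defined, and similarly from $a\leqslant b^-$.

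The step I expect to be the main obstacle, and the only one requiring real care, is the cancellation/monotonicity lemma behind this criterion: if $a+b$ and $a+c$ are defined and $a+b\leqslant a+c$, then $b\leqslant c$, and symmetrically on the other side. I would prove it directly from the axioms: from $a+b\leqslant a+c$ pick $z$ with $(a+b)+z=a+c$; by the associativity-existence axiom (PE1) the sum $b+z$ is then defined and $a+(b+z)=(a+b)+z=a+c$, whence the left-cancellation half of (GP3) gives $b+z=c$, i.e. $b\leqslant c$. The mirror argument, using the right-cancellation half of (GP3), supplies the one-sided comparison needed to read off $a\leqslant b^-$ from $a+b\leqslant b^-+b$. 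Once this lemma is in hand the existence criterion is immediate, and in particular I obtain the cross-identity $b\leqslant a^\sim\ \Leftrightarrow\ a\leqslant b^-$, both sides being equivalent to the definedness of $a+b$.

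With the criterion in place the proposition follows formally. Weak commutativity says precisely that $a+b$ is defined iff $b+a$ is defined, i.e. $b\leqslant a^\sim\ \Leftrightarrow\ a\leqslant b^\sim$ for all $a,b\in E$. For the direction assuming $x^-=x^\sim$ for every $x$, I would simply substitute $b^-=b^\sim$ into the cross-identity, turning $b\leqslant a^\sim\Leftrightarrow a\leqslant b^-$ into $b\leqslant a^\sim\Leftrightarrow a\leqslant b^\sim$, which is weak commutativity. For the converse, assuming weak commutativity I would chain it with the cross-identity to obtain $a\leqslant b^\sim\ \Leftrightarrow\ a\leqslant b^-$ for all $a$; specialising to $a=b^-$ and to $a=b^\sim$ then yields $b^-\leqslant b^\sim$ and $b^\sim\leqslant b^-$, so antisymmetry of $\leqslant$ forces $b^-=b^\sim$. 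Here I use only that $\leqslant$ is a partial order together with the involution identities $(a^-)^\sim=(a^\sim)^-=a$ already recorded in the excerpt.
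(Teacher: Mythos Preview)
Your proposal is correct and rests on the same existence criterion ($a+b$ defined $\Leftrightarrow b\leqslant a^\sim\Leftrightarrow a\leqslant b^-$) that the paper uses. The paper's argument is slightly more direct in the converse direction: rather than deriving the universal equivalence $a\leqslant b^\sim\Leftrightarrow a\leqslant b^-$ and then specializing, it simply applies condition (C) to the two concrete sums $a^-+a=1$ and $a+a^\sim=1$, concluding that $a+a^-$ and $a^\sim+a$ exist and hence $a^-\leqslant a^\sim$ and $a^\sim\leqslant a^-$. Your route via the cross-identity and specialization is a mild repackaging of the same idea; the extra care you take in proving the cancellation/monotonicity lemma is sound but is treated as background in the paper (it is part of the basic theory in \cite{DvVe01a}).
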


\begin{proof}  Assume that, for any $a\in E$, we have that $a^{-}=a^{\sim}.$ If $a+ b$ exists in $E$ for $a,b\in E$, then $
b\leqslant a^{\sim}=a^{-},$  which implies that $b+ a$ exists in $E.$ Hence, $E$ is a weakly commutative PEA.

Conversely, assume that $E$ is a weakly commutative PEA. Then for any $a\in E$, $a^{-}+ a$ exists and equals  the unit $1$. By the condition (C), we have that $a+ a^{-}$ exists in $E$, and so $a^{-}\leqslant a^{\sim}$. Since the equality $a+ a^{\sim}=1$ holds, we have that $a^{\sim}+ a$ exists, and so $a^{\sim}\leqslant a^{-}$.  Hence, we conclude   $a^{\sim}= a^{-}.$
\end{proof}

\begin{remark}\label{def:t-norm}
{\rm  In \cite{Dv04}, Dvure\v{c}enskij has introduced  symmetric pseudo-effect algebras as following: a pseudo-algebra $E$ is said to
be {\it symmetric} (or, more precisely, with symmetric differences) if $a^{-} = a^{\sim} $ for any $a \in E$. Now, by Proposition
\ref{pr:complement}, the set of weakly commutative pseudo-effect algebras coincides  with the set of symmetric pseudo-effect algebras.  In such a case, we set $a^{\prime}=a^{-} = a^{\sim}$ and $a^{\prime}$ is said to be
an orthosupplement of $a.$
}
\end{remark}

\begin{example}\label{ex:ex22}
{\rm Let $\mathbb{Z}$ be the group of integers and $G$ be a (not necessarily Abelian) po-group. Let $\mathbb{Z}\overrightarrow{\times}G$ be the lexicographic product of $\mathbb{Z}$ and $G,$ then $\mathbb{Z}\overrightarrow{\times}G$ is a
directed po-group with strong unit $(1,0)$. If we set $E=\Gamma(\mathbb{Z}\overrightarrow{\times}G,(1,0)),$
then $E$ is a symmetric PEA but not necessarily commutative PEA. }
\end{example}

Let $(G;+,\leqslant,0)$ be a    po-group. An element $c\in G$  such that $x+c=c+x$ for all $x\in G$ is said to be a commutator of $G$. We set $C(G)=\{c\in G\mid c $ is a commutator of $G\}.$

\begin{example}\label{ex:symPEA}
{\rm Let $G$ be a po-group. Assume that $c\in G^{+}$ is a commutator, then
$E=\Gamma(\mathbb{Z}\overrightarrow{\times}G,(n,c))$   is a symmetric PEA. Especially,
$\Gamma(\mathbb{Z}\overrightarrow{\times}G,(n,0))$ is a symmetric PEA. }
\end{example}

Let $(E;$ $+,$ $0)$  be a  GPEA. Let $E^{\sharp}$ be a set disjoint from $E$ with the same cardinality. Consider a bijection
$a\mapsto a^{\sharp}$ from $E$ onto $E^{\sharp}$ and let us denote $E\cup E^{\sharp}=\hat{E}$. Define a partial operation $+^{*}$
on $\hat{E}$ by the following rules. For $a,b\in E,$

\begin{itemize}
\item[(i)] $a+^{*} b$ is defined if and only if $a+ b$ is defined, and $a+^{*} b:=a+ b.$
\vspace{-2mm}
\item[(ii)] $a+^{*} b^{\sharp}$ is defined if and only if $b\backslash a$ is
defined, and then $a+^{*} b^{\sharp}:=(b\backslash a)^{\sharp}.$

\vspace{-2mm}
\item[(iii)] $b^{\sharp}+ ^{*}a$ is defined if and only if $a/b$ is defined, and then $b^{\sharp}+^{*} a:=(a/b)^{\sharp}.$
\end{itemize}

In \cite{XieLi11}, we have obtained the following results.

\begin{proposition}\label{pr:symmetric}
If $(E;+,0)$ is  a symmetric GPEA, then the structure $(\hat{E};+^{*}, 0,0^{\sharp})$ is a symmetric PEA. Moreover,
$E$ is an order ideal in $ \hat{E}$ closed under $+,$ and the partial order induced by $+^{*} ,$ when restricted to $E,$
coincides with the partial order induced by $+$.
\end{proposition}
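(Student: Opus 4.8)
The hypothesis that $E$ is a \emph{symmetric} GPEA means that $E$ is weakly commutative, i.e. it satisfies condition (C): $a+b$ exists if and only if $b+a$ exists. The plan is to verify the four axioms (PE1)--(PE4) for $(\hat E;+^{*},0,0^{\sharp})$ with unit $1:=0^{\sharp}$, then read off symmetry, and finally settle the order-theoretic assertions. Throughout I would use the difference identities $(b\backslash a)+a=a+(a/b)=b$ and $a=(b\backslash a)/b=b\backslash(a/b)$ (valid whenever $a\leqslant b$), together with the fact that, by (GP1) and the cancellation (GP3), left and right addition by a fixed element are order-embeddings, so $a+c\leqslant a+d\Leftrightarrow c\leqslant d$ and $c+a\leqslant d+a\Leftrightarrow c\leqslant d$. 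A preliminary computation records that $a+^{*}a^{\sharp}=(a\backslash a)^{\sharp}=0^{\sharp}=(a/a)^{\sharp}=a^{\sharp}+^{*}a$, that no sum $a^{\sharp}+^{*}b^{\sharp}$ is ever defined, and that $x+^{*}0^{\sharp}$ (resp.\ $0^{\sharp}+^{*}x$) is defined only for $x=0$; the first of these already shows that $0^{\sharp}$ is the greatest element.

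The bulk of the work is (PE1), and this is the main obstacle. I would split $x,y,z\in\hat E$ according to membership in $E$ or $E^{\sharp}$. Since $E^{\sharp}+^{*}E^{\sharp}$ is never defined, any grouping containing two starred entries is undefined on both sides, so only the patterns with at most one starred entry need genuine attention: the all-plain case is (GP1) in $E$, while the three mixed patterns $(E,E,E^{\sharp})$, $(E,E^{\sharp},E)$, $(E^{\sharp},E,E)$ must be checked individually. In each I would first show that the left grouping is defined if and only if the right one is --- translating each definedness condition, via the difference identities and the order-embedding property, into a single statement about $E$ --- and then equate the two resulting starred values by cancelling a common summand through (GP3). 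It is purely bookkeeping, but the mixing of the left difference $\backslash$ and the right difference $/$ across these patterns forces careful use of both the left and the right characterization of $\leqslant$ (each available in $E$ by (GP2)).

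Axioms (PE2) and (PE4) are then immediate from the preliminary computation: $a+^{*}a^{\sharp}=0^{\sharp}=a^{\sharp}+^{*}a$ gives existence of one-sided complements, while the rules (i)--(iii) force the only $d$ with $a+^{*}d=0^{\sharp}$ to be $d=a^{\sharp}$ (and dually for $a^{\sharp}$), yielding uniqueness; (PE4) is exactly the statement that $x+^{*}0^{\sharp}$ and $0^{\sharp}+^{*}x$ exist only for $x=0$. For (PE3) I would treat the plain--plain case by quoting (GP2) in $E$, and the two mixed cases by solving explicitly for the witnesses $d,e$. This is precisely where the symmetry hypothesis is indispensable: to realize $a+^{*}b^{\sharp}=(b\backslash a)^{\sharp}$ in the form $f^{\sharp}+^{*}a=(a/f)^{\sharp}$ one is forced to take $f=a+(b\backslash a)$, and this sum exists only because $(b\backslash a)+a=b$ exists and condition (C) lets us reverse the order of the two summands. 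Since complements are then two-sided, the uniqueness in (PE2) gives $x^{-}=x^{\sim}$ for every $x\in\hat E$ (namely $a^{-}=a^{\sim}=a^{\sharp}$ and $(a^{\sharp})^{-}=(a^{\sharp})^{\sim}=a$), so $\hat E$ is symmetric.

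For the order assertions I would argue that, for $a,b\in E$, one has $a\leqslant b$ in $\hat E$ if and only if $a\leqslant b$ in $E$: if $a+^{*}z=b$ with $b\in E$, then $z$ cannot be starred (rules (ii)--(iii) land in $E^{\sharp}$), so $z\in E$ and $a+z=b$, while the converse uses $a+(a/b)=b$. Closure of $E$ under $+^{*}$ is clause (i) of the definition. Downward closedness follows by the same dichotomy: if $c^{\sharp}\leqslant a$ with $a\in E$, then $c^{\sharp}+^{*}z=a$ would require $z\in E$ and hence land in $E^{\sharp}$, a contradiction, so no starred element lies below an element of $E$. Hence $E$ is an order ideal of $\hat E$ closed under $+^{*}$, with the induced order coinciding with that of $E$.
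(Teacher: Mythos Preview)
Your plan is sound and, carried out, yields a correct proof. Note, however, that the paper itself does not prove this proposition: it is imported from \cite{XieLi11} (``In \cite{XieLi11}, we have obtained the following results''), so there is no in-paper argument to compare against. What you have written is precisely the direct verification one would expect in the cited source: a case split on the $E$/$E^{\sharp}$ membership of the arguments to handle (PE1), the immediate identities $a+^{*}a^{\sharp}=0^{\sharp}=a^{\sharp}+^{*}a$ for (PE2) and (PE4), and the explicit construction of the (PE3) witnesses. Your identification of the single point where the symmetry hypothesis (C) is genuinely needed --- producing $f=a+(b\backslash a)$ (and dually $(a/b)+a$) so that the ``conjugate'' in (PE3) lands back in $\hat E$ --- is exactly right, and it dovetails with Proposition~\ref{pr:unitization}, which shows that without (C) the structure $\hat E$ fails to be a PEA. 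The order-ideal and order-restriction claims follow, as you say, from the simple observation that rules (ii)--(iii) always produce values in $E^{\sharp}$, so any witness to $x\leqslant a$ with $a\in E$ must already lie in $E$.

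One small refinement worth making explicit when you write it up: in the (PE1) case $(a,b^{\sharp},c)$ both the left and the right difference appear simultaneously, and the equivalence $\bigl(a\leqslant b \text{ and } c\leqslant b\backslash a\bigr)\Leftrightarrow\bigl(c\leqslant b \text{ and } a\leqslant c/b\bigr)$ together with $c/(b\backslash a)=(c/b)\backslash a$ really does need the two-sided description of $\leqslant$ furnished by (GP2), exactly as you anticipate. Otherwise there is nothing to add.
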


\begin{proposition}\label{pr:unitization}
Let $(E;+,0)$ be a GPEA and let the structure $(\hat{E};+^{*},0,0^{\sharp})$ be a PEA, then $(E;+,0)$ is a symmetric  GPEA and $(\hat{E};+^{*},0,0^{\sharp})$ is a symmetric PEA.
\end{proposition}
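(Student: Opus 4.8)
The plan is to run the converse of Proposition \ref{pr:symmetric} by exploiting the uniqueness of complements guaranteed by (PE2) in the PEA $\hat{E}$, and then to transfer the resulting commutativity of existence back down to $E$ through rule (i). First I would compute, for an arbitrary $a\in E$, the two one-sided sums $a+^{*}a^{\sharp}$ and $a^{\sharp}+^{*}a$. By rule (ii), $a+^{*}a^{\sharp}$ is defined because $a\backslash a$ is defined (as $a\leqslant a$), and it equals $(a\backslash a)^{\sharp}$; since $(a\backslash a)+a=a=0+a$, right cancellation (GP3) gives $a\backslash a=0$, so $a+^{*}a^{\sharp}=0^{\sharp}$. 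Symmetrically, by rule (iii), $a^{\sharp}+^{*}a=(a/a)^{\sharp}=0^{\sharp}$, using $a/a=0$ from left cancellation. Thus both one-sided sums of $a$ with $a^{\sharp}$ return the unit $0^{\sharp}$ of $\hat{E}$.

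Next I would invoke (PE2): in a PEA there is exactly one right complement $x^{\sim}$ and exactly one left complement $x^{-}$ of each element. Reading $a+^{*}a^{\sharp}=0^{\sharp}$ off both ways gives $a^{\sim}=a^{\sharp}$ and $(a^{\sharp})^{-}=a$, and reading $a^{\sharp}+^{*}a=0^{\sharp}$ off both ways gives $a^{-}=a^{\sharp}$ and $(a^{\sharp})^{\sim}=a$. Hence $a^{-}=a^{\sim}=a^{\sharp}$ for every $a\in E$, and $(a^{\sharp})^{-}=(a^{\sharp})^{\sim}=a$ for every $a^{\sharp}\in E^{\sharp}$. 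Since every element of $\hat{E}=E\cup E^{\sharp}$ has one of these two forms, the identity $x^{-}=x^{\sim}$ holds throughout $\hat{E}$, so $\hat{E}$ is a symmetric PEA, which is the second assertion.

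Finally I would descend to $E$. Having shown $\hat{E}$ is a symmetric PEA, Proposition \ref{pr:complement} yields that $\hat{E}$ is weakly commutative, i.e. $x+^{*}y$ is defined if and only if $y+^{*}x$ is defined for all $x,y\in\hat{E}$. Restricting to $a,b\in E$ and applying rule (i), which equates the definedness of $a+^{*}b$ with that of $a+b$, I obtain the equivalences $a+b\text{ exists}\Leftrightarrow a+^{*}b\text{ exists}\Leftrightarrow b+^{*}a\text{ exists}\Leftrightarrow b+a\text{ exists}$. This is precisely condition (C), so $(E;+,0)$ is a symmetric GPEA, completing both claims.

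I do not anticipate a serious obstacle, as this is essentially the reverse direction of the preceding proposition; the one point meriting care is that the identity $x^{-}=x^{\sim}$ must be checked on the copied part $E^{\sharp}$ as well as on $E$, and that the final descent uses \emph{only} rule (i), so that no elements of $E^{\sharp}$ are inadvertently introduced via rules (ii) or (iii) when establishing (C) for the original GPEA $E$.
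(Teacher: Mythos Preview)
The paper does not actually contain a proof of this proposition; it is stated as a result obtained in \cite{XieLi11} and quoted without argument. Hence there is no in-paper proof to compare your attempt against.

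That said, your argument is correct and self-contained. The computation $a+^{*}a^{\sharp}=(a\backslash a)^{\sharp}=0^{\sharp}$ via rule~(ii) and $a^{\sharp}+^{*}a=(a/a)^{\sharp}=0^{\sharp}$ via rule~(iii), followed by the uniqueness clause of (PE2), gives $a^{-}=a^{\sim}=a^{\sharp}$ on $E$ and $(a^{\sharp})^{-}=(a^{\sharp})^{\sim}=a$ on $E^{\sharp}$, so $\hat{E}$ is symmetric. The descent to weak commutativity of $E$ through Proposition~\ref{pr:complement} and rule~(i) is the natural finishing step. The only cosmetic remark is that you could bypass Proposition~\ref{pr:complement} entirely: once you know $a^{\sim}=a^{\sharp}=a^{-}$ in $\hat{E}$, the chain $a+b$ exists $\Leftrightarrow b\leqslant a^{\sim}=a^{-}\Leftrightarrow b+a$ exists already lives inside rule~(i), so condition~(C) for $E$ follows directly. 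Either way the proof stands.
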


By Proposition  \ref{pr:symmetric} and \ref{pr:unitization}, we immediately conclude the following result.

\begin{corollary}\label{def:symgpea}
{Let  $(E;+,0)$ be a GPEA. Then the algebraic system $(\hat{E};+^{*},0,0^{\sharp})$ is a PEA if and only if $(E;+,0)$ is  a symmetric GPEA. }
\end{corollary}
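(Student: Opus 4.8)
The plan is to observe that this equivalence is nothing more than the conjunction of the two implications already established as Propositions \ref{pr:symmetric} and \ref{pr:unitization}; so rather than argue from scratch, I would simply assemble those two results, checking only that their hypotheses and conclusions match up to give a genuine biconditional.

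For the backward direction, I would assume $(E;+,0)$ is a symmetric GPEA and invoke Proposition \ref{pr:symmetric} verbatim: it yields that $(\hat{E};+^{*},0,0^{\sharp})$ is a symmetric PEA. Since symmetry is merely the extra requirement $a^{-}=a^{\sim}$ imposed on top of the pseudo-effect-algebra axioms (as recorded in Remark \ref{def:t-norm}), a symmetric PEA is in particular a PEA, and this is exactly the conclusion needed for this half.

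For the forward direction, I would assume $(\hat{E};+^{*},0,0^{\sharp})$ is a PEA. This is precisely the hypothesis of Proposition \ref{pr:unitization}, namely a GPEA $(E;+,0)$ whose unitization $\hat{E}$ happens to carry a PEA structure; its conclusion states that $(E;+,0)$ is then a symmetric GPEA, which is what is required. Because both halves are direct citations, there is no real obstacle; the only point deserving a word is the remark that "symmetric PEA" in Proposition \ref{pr:symmetric} entails "PEA," so that the two propositions are genuinely converse to one another and their union delivers the biconditional as stated.
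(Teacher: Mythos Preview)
Your proposal is correct and matches the paper's own argument exactly: the paper simply states that the corollary follows immediately from Propositions \ref{pr:symmetric} and \ref{pr:unitization}, which is precisely the assembly you describe. Your additional remark that a symmetric PEA is in particular a PEA is the only bridging observation needed, and it is implicit in the paper's one-line deduction.
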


The symmetric PEA $(\hat{E};+^{*},0,0^{\sharp})$ is usually
called the {\it unitization} of a symmetric GPEA $(E;+,0),$ and for any $a\in E,$ $a+ a^{\sharp}=a^{\sharp}+ a=0^{\sharp},$ hence, $a^{\prime}=a^{\sharp}$ and
$a^{\sharp}$$^{\prime}=a.$ Since the operation $+^{*}$ on $\hat{E}$ coincides with the $+$ operation on $E$, it will cause no confusion if we use the notation $+$ also for its extension on $\hat{E}.$

\begin{definition}\label{def:morphism}
{\rm (i)\ Let $E,F$ be two GPE-algebras. A mapping $f : E\rightarrow F$ is a {\it morphism} if the following conditions are satisfied:

(1)\ $f(0_{E}) = 0_{F}$ .

(2)\ If $a, b \in E$ and $a+ b$ exists, then $f(a)+ f(b)$
exists and $f(a+ b) = f(a)+ f(b)$.

(ii)\ Let $E,F$ be two pseudo-effect algebras. A mapping $f : E\rightarrow F$ is a {\it morphism} if the following conditions
are satisfied:

(1)\ $f(0_{E}) = 0_{F}, f(1_{E}) = 1_{F}$ .

(2)\ If $a, b \in E$ and $a+ b$ exists, then $f(a)+ f(b)$ exists and $f(a + b) = f(a)+ f (b)$.

(iii)\ Let $E$ be a PEA. Then any morphism
$s : E\rightarrow[0,1]$ is said to be a {\it state} on $E$. A state $s$ is said to be {\it discrete} if there exists an integer $n$ such that $s(E) \subseteq \{0,\frac{1}{n},\ldots,1\},$ where $s(E)=\{s(x)\mid x\in E\}.$ If $s(E)=\{0,\frac{1}{n},\ldots,1\}$, we say that $s$ is an $(n+1)$-valued discrete state.

Especially, if $n=1$, then we say that $s$ is a {\it two-valued  state}.

(iv)\ A state $s$ is said to be  {\it extremal} if, for any
states $s_{1},s_{2}$ and $\alpha\in(0,1)$, the equation $s=\alpha s_{1}+(1-\alpha)s_{2}$ implies $s=s_{1}=s_{2}.$ }
\end{definition}

Of course, every two-valued state is a 2-valued discrete state, and vice-versa.
For example, every two-valued state on a PEA $E$ is extremal.

We note that if $s$ is a state on $E$, then $s(0)=0$ and $s(1)=1,$ therefore, in what follows we will assume $0\ne 1.$

\begin{example}\label{ex:nsta}
{ Let $G$ be a directed po-group and let $c\in G.$  Then $\Gamma(\mathbb{Z}\overrightarrow{\times}G,(n,c))$  admits an $(n+1)$-valued discrete state.}
\end{example}

We recall that the real interval $[0,1]$ can be assumed also as an interval effect algebra $\Gamma(\mathbb R,1).$

\begin{remark}\label{re:2valued}
{\rm Let $E$ be a PEA and $s:E\rightarrow [0,1]$ be a   state with $|s(E)|=n+1.$

(i) If $n=1$, then $s(E)=\{0,1\}$ is a sub-effect algebra of $[0,1]$ and $s$
is a two-valued  state.

(ii) If $n> 1,$ then $s(E)$ is not necessarily a sub-effect algebra of $[0,1]$. For example, let $E=\{0,a,b,1\}$, we endow $E$ with the partial
operation $+$ as follows,
(1) for any $x\in E$, $x+0=0+ x=x,$ (2) $a+ b=b+ a=1$. Then the algebraic
system $(E;+,0,1)$ is an effect algebra. We define a mapping $s:E\rightarrow [0,1]$ as follows, $s(0)=0,$ $s(a)=\frac{2}{5},$
$s(b)=\frac{3}{5},$ $s(1)=1$, then $s$ is a discrete state on $E$ and $s(E)=\{0,\frac{2}{5},\frac{3}{5},1\}.$
However, $s(E)$ is not a sub-effect algebra of $[0,1].$

(iii) Let $s$ be an $(n+1)$- valued discrete state on an effect algebra. Then $s$ is not necessarily extremal. For example, for the effect algebra $E$ in (ii), set $s(0)=0,$ $s(a)=s(b)=\frac{1}{2},$
$s(1)=1,$ then $s$ is a 3-valued discrete state which is not extremal. In fact, we set $s_{1}(0)=s_{1}(a)=0,$
$s_{1}(b)=s_{1}(1)=1,$ and $s_{2}(0)=s_{2}(b)=0,$
$s_{2}(a)=s_{2}(1)=1,$ then $s_{1}, s_{1}$ are two states on $E,$ and $s=\frac{1}{2}s_{1}+\frac{1}{2}s_{2},$ however, $s\neq s_{1},$ $s\neq s_{2}.$  In  \cite[Prop 8.5]{Dv05}, Dvure\v{c}enskij has proved that if $s$ is an extremal discrete state on an effect algebra $E$ with (RDP), then $s$ is an $(n+1)$-valued discrete state. }
\end{remark}

\begin{theorem}\label{th:mvalued}
Let $E$ be a PEA and $s:E\rightarrow [0,1]$ be a state. Assume that $|s(E)|=n+1$ and  $n\geqslant1$. Then the following statements are equivalent.

\begin{itemize}
\item[{\rm (i)}] $s$ is an $(n+1)$-valued discrete state.

\vspace{-2mm}
\item[{\rm (ii)}] $s(E)$ is a sub-effect algebra of the effect algebra $[0,1].$

\vspace{-2mm}\item[{\rm (iii)}] For any $t,u\in s(E)$, if $t\leqslant u$, then there exists a $v\in s(E)$, such that $t+v=u.$
\end{itemize}
\end{theorem}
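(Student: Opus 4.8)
The plan is to prove the cycle of implications (i) $\Rightarrow$ (ii) $\Rightarrow$ (iii) $\Rightarrow$ (i). The statement asserts that for a state $s$ on a PEA with exactly $n+1$ values in its range, three conditions are equivalent: that $s$ is the "evenly spaced" state with $s(E)=\{0,1/n,\dots,1\}$; that $s(E)$ is a sub-effect algebra of $[0,1]$; and an internal "difference property" on $s(E)$.

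Let me think about each implication.

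(i) $\Rightarrow$ (ii): If $s$ is $(n+1)$-valued discrete, then $s(E)=\{0,1/n,2/n,\dots,n/n=1\}$. I need to check this is a sub-effect algebra of $\Gamma(\mathbb{R},1)=[0,1]$. A sub-effect algebra must contain $0$ and $1$, be closed under the orthosupplement $t\mapsto 1-t$, and be closed under the partial sum $+$ (when the sum stays $\le 1$). Since $s(E)=\{k/n: 0\le k\le n\}$, we have $1-k/n=(n-k)/n\in s(E)$, and if $j/n+k/n\le 1$ then $(j+k)/n\in s(E)$. So this is essentially a direct verification.

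(ii) $\Rightarrow$ (iii): This looks almost immediate. If $t,u\in s(E)$ with $t\le u$, then in $[0,1]$ the element $u\backslash t=u-t$ (real subtraction) satisfies $t+(u-t)=u$. Since $s(E)$ is a sub-effect algebra, it's closed under this difference operation, so $v:=u-t\in s(E)$ and $t+v=u$.

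(iii) $\Rightarrow$ (i): This is the substantive direction. I have a finite set $s(E)\subseteq[0,1]$ with $|s(E)|=n+1$, containing $0$ and $1$, with the property that it's "downward/difference-closed" as in (iii). I want to conclude $s(E)=\{0,1/n,\dots,1\}$.

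The hard part will be (iii) $\Rightarrow$ (i). Here is my approach. Write the elements of $s(E)$ in increasing order $0=t_0<t_1<\dots<t_n=1$. Because $s$ is a state and $E$ is a PEA, $s(E)$ is closed under orthosupplement: $t\in s(E)$ implies $1-t=s(a')\in s(E)$ whenever $t=s(a)$ (using that $a'$ exists and $s(a')=1-s(a)$). So the set is symmetric about $1/2$, giving $t_i+t_{n-i}=1$. Now the key is to use (iii) repeatedly to show the gaps are all equal. Set $\delta=t_1-t_0=t_1$, the smallest positive value. Applying (iii) to $t_0\le t_1$ is trivial; the real leverage is to show every $t_i$ is a multiple of $\delta$ and that consecutive gaps cannot exceed $\delta$. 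Concretely, I would argue that for each $i$, applying (iii) to the pair $t_{i}\le t_{i+1}$ yields some $v\in s(E)$ with $t_i+v=t_{i+1}$, so $v=t_{i+1}-t_i\in s(E)$ is itself a gap; since $v\ge\delta$ (as $\delta$ is the least positive element) and one shows $v$ must in fact equal $\delta$, the gaps are forced to be constant $=\delta=1/n$. To pin down that $v=\delta$ rather than some larger value, I would track cardinality: repeatedly subtracting produces a chain of distinct elements of $s(E)$, and since $s(E)$ has exactly $n+1$ elements, the arithmetic must close up to give exactly $n$ equal steps of size $1/n$. The main obstacle is organizing this descent/cardinality argument cleanly so that condition (iii) forces uniform spacing without circular reasoning.
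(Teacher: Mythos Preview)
Your plan is correct and matches the paper's argument closely. The paper routes the hard implication through (ii) (proving (iii)$\Rightarrow$(ii) and then (ii)$\Rightarrow$(i)), listing $s(E)=\{0=t_0<t_1<\cdots<t_n=1\}$ and showing inductively that $t_i=it_1$ by using both difference-closure and sum-closure, then reading off $t_1=1/n$ from $1-t_{n-1}=t_1$; it also records an alternative proof via the fact that $s(E)=\Gamma(G,1)$ for a subgroup $G\le\mathbb{R}$, which must be cyclic since $s(E)$ is finite.

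Your direct route (iii)$\Rightarrow$(i) also closes, and the ``obstacle'' you flag is easily handled without sum-closure: by (iii), $s(E)$ is closed under nonnegative differences, so repeatedly subtracting $\delta:=t_1$ from $1$ gives $1,1-\delta,\ldots,1-m\delta\in s(E)$ with $m=\lfloor 1/\delta\rfloor$, hence $m+1\le n+1$; on the other hand the $n$ consecutive gaps are each $\ge\delta$ and sum to $1$, so $n\delta\le 1$, i.e.\ $m\ge n$; thus $m=n$ and $1-n\delta\in s(E)\cap[0,\delta)$ forces $\delta=1/n$, after which the $n+1$ values $k/n$ exhaust $s(E)$ by cardinality. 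Your symmetry remark $t_i+t_{n-i}=1$ is valid but not actually needed.
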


\begin{proof}
If $n=1,$ then using Remark \ref{re:2valued}, it is easy to see that
(i), (ii) and (iii) are mutually equivalent.

Now, we assume that $n>1$ and $s(E)=\{0,t_{1},\ldots,t_{n-1},1\}$,
where $0<t_{1}<t_{2}<\cdots <t_{n-1}<1.$

(i)$\Rightarrow$(ii). If $s$ is an $(n+1)$-valued discrete state, then
$s(E)=\{0,\frac{1}{n},\ldots,\frac{n-1}{n},1\}$ is a sub-effect algebra of $[0,1].$

(ii) $\Rightarrow$ (iii). Assume that $s(E)$ is a sub-effect algebra of the effect algebra $[0,1].$ For any   $t,u\in s(E)$, if     $t\leqslant u$, then there exists a $v\in s(E)$ such that $t+ v$ exists and $t+ v=u.$

(iii) $\Rightarrow$ (ii).
 By (iii), for any $t,v\in s(E)$ with $t\leqslant v,$ we have that $v-t\in s(E)$. We define a partial binary operation $+$ on $s(E)$ as follows:  $t+ v$ exists in $s(E)$ iff $t\leqslant 1-v$, and then  $t+v$ is the classical addition of two real numbers $t$ and $v.$ It is routine to verify that $(s(E);+,0,1)$ is an effect algebra. Further, for any $t,v\in s(E),$ $t+ v$ exists iff $t+v\leqslant1,$ which implies that  $(s(E);+,0,1)$ is a sub-effect algebra of [0,1].

(ii) $\Rightarrow $ (i).
Assume that (ii) holds, and so (iii) holds, too. It suffices to prove
that $t_{i}=\frac{i}{n}$ for any $i\in\{1,\ldots,n-1\}$.

If $n=2$, then we have that $s(E)=\{0,t_{1},1\}$. By  $0<t_{1}<1,$
there exists a real number $t\in s(E)$ such that $t+ t_{1}=1$. Obviously,
$t\neq0,1$, and so $t=t_{1}$, which implies that
$t_{1}=\frac{1}{2}.$  Then (i) holds.

Now, we assume that  $n>2.$ We are claiming that for any $i\in \{1,\ldots, n-1\},$ $t_i=it_1.$

If $i=2,$ then  $t_{1}<t_{2},$ and there exists a $j\in \{1,2\}$ such that $t_{1}+ t_{j}=t_{2},$ and so $j=1$, hence
$t_{2}=2t_{1}$.

Assume by  induction that, for any $j\leqslant i< n-1,$ we have proved $t_j=jt_1.$ Since $t_i<t_{i+1},$ we have $t_{i+1}-t_i \in \{t_1,\ldots, t_i\}.$ If it would be $t_{i+1}-t_i \ge 2t_1,$ then $t_i <t_i+t_1 <t_i +2t_1 \leqslant t_i+t_j = t_{i+1}$ which is impossible because between $t_i$ and $t_{i+1}$ there is no element in $s(E).$ Hence, $t_{i+1}-t_i = t_1$ which proves $t_i=it_1$ for any $i=1,\ldots, n-1.$

Finally, $0<1-t_{n-1}<\cdots <1-t_1 <1$ which gives $1-t_{n-1}=t_1$ so that $t_1= \frac{1}{n},$ and $s$ is an $(n+1)$-valued discrete state.

For interest, we  also give  another proof. We are assuming $s(E)$ is a sub-effect algebra of $[0,1]$. Noticing that $\Gamma(\mathbb{R},1)=[0,1],$ and so, by  \cite[Thm 2.4]{BeFo97}, there exists a subgroup $G$ of $\mathbb{R}$ such that $\Gamma(G,1)=s(E)$. By  \cite[Lem 4.21]{Good86}, there are following two cases:

(a) $G$ is a dense subgroup of $\mathbb{R}.$ Then $|\Gamma(G,1)|=|s(E)|$ is infinite, which is a contradiction with our assumption.

(b) $G$ is a cyclic subalgebra of $\mathbb{R}$. Assume that $G$ is generated by a positive element $t,$ and so $G=\{nt\mid n\in \mathbb{Z}\}$.  Thus, by $\Gamma(\mathbb{R},1)\subseteq [0,1]$, we have that $t\in (0,1),$ and $nt=1.$  In fact, by $1\in G$, there exists a natural number $m$ such that $mt=1.$ Thus, we have that $\Gamma(G,1)=\{0,\frac{1}{m},\ldots,1\},$ which implies that $s(E)=\{0,\frac{1}{m},\ldots,1\}.$ However, $|s(E)|=n+1,$ and so $m=n,$ hence, $s(E)=\{0,\frac{1}{n},\ldots,1\}.$

Thus, we have proved that  $s(E)=\{0,\frac{1}{n},\ldots,1\}.$
\end{proof}

\section{Pseudo-effect algebras with two-valued  states}%3

In this section, we will study the  structure of  pseudo-effect algebras
with two-valued  states. We will prove that a pseudo-effect
algebra $E$ admits a two-valued  state if and only if
there exists an ideal $I$ such that $E=I\cup I^{-}=I\cup I^{\sim},$
where $I^{-}=\{i^{-}\mid i\in I\}$, $I^{\sim}=\{i^{\sim}\mid i\in I\}$ and
$I\cap I^{-}=I\cap I^{\sim}=\emptyset.$

We recall that a nonempty subset $I$ of a GPEA $E$ is called an {\it ideal} if the following conditions hold:

\begin{itemize}
\item[(i)] for any $a \in E$ and $i\in I$ with $a\leqslant i$, we have $a
\in I$;

\item[(ii)] for any $i, j \in I$ if  $i + j $ exists in $E$, then we
have $i + j \in I$.
\end{itemize}
If  a set $I$ is an ideal of a  PEA $E$ and $1\notin I,$ then the ideal $I$ is called {\it proper}.

An ideal $I$  in a GPEA $E$ is called  {\it normal} if, for any
$a,i,j\in E$  such that $a+ i$ and $j+ a$ exist and are
equal, we have $i\in I$ if and only if  $j\in I$.

For example, if $s$ is a state of a PEA $E$, then the  set $\Ker(s)=\{x\in E\mid s(x)=0\},$ {\it kernel} of $s$, is a normal ideal of $E.$

An ideal $I$  in a GPEA $E$ is called  {\it maximal} if it is a
proper ideal of $E$ and is  not included properly in any proper ideal of
$E.$

For example, the sets $\{0\}$ and $E$ are ideals of $E$. In
addition, if $I$ is an ideal of the GPEA $E$, then $(I;+,0)$ is also a
sub-GPEA of $(E;+,0).$

\begin{theorem}\label{th:sym}
Let  $(E;+,0,1)$ be a symmetric  PEA. The following two
statements are equivalent.
\begin{itemize}
\item[{\rm(i)}] There exists a two-valued  state on $E$.
\vspace{-2mm}
\item[{\rm (ii)}] There exists a sub-GPEA $(I;+,0)$ of $(E;+,0)$ such that
$E=\hat{I},$ $I$ is a maximal and normal  ideal of $E$.
\end{itemize}
\end{theorem}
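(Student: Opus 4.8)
The plan is to prove both implications, using the kernel of a state as the candidate ideal and the unitization machinery of Propositions \ref{pr:symmetric} and \ref{pr:unitization}.

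For the implication (i) $\Rightarrow$ (ii), suppose $s$ is a two-valued state and set $I := \Ker(s) = \{x \in E \mid s(x)=0\}$. As noted just after the definition of a state, $I$ is a normal ideal. Since $E$ is symmetric I write $a' := a^- = a^\sim$, and from $s(a)+s(a') = s(a+a') = s(1) = 1$ I get $s(a') = 1 - s(a)$. Because $s$ is two-valued this yields $E = I \cup I'$ with $I \cap I' = \emptyset$, where $I' := \{i' \mid i \in I\} = I^- = I^\sim = \{x \mid s(x)=1\}$. Maximality of $I$ then follows immediately: a proper ideal $J \supsetneq I$ would contain some $x$ with $s(x)=1$, whence $x' \in I \subseteq J$, and then $1 = x+x' \in J$, contradicting properness. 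Next, since $E$ is weakly commutative (Proposition \ref{pr:complement}) and $I$ is an ideal, the sub-GPEA $(I;+,0)$ inherits weak commutativity, hence is a symmetric GPEA, so by Corollary \ref{def:symgpea} its unitization $\hat I$ is a symmetric PEA. Finally I would produce the isomorphism $E \cong \hat I$ by the map $i \mapsto i$ on $I$ and $i^\sharp \mapsto i'$ on $I^\sharp$, identifying the abstract complements with the concrete ones.

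For (ii) $\Rightarrow$ (i), assume $E = \hat I$ with $I$ a maximal normal ideal and define $s : E \to [0,1]$ by $s(x) = 0$ for $x \in I$ and $s(x) = 1$ for $x \in I^\sharp$. This is well defined because $\hat I = I \cup I^\sharp$ disjointly, and $s(0)=0$, $s(1) = s(0^\sharp) = 1$. To verify that $s$ is a morphism I would run through the defining clauses of $+^*$: a sum of two elements of $I$ stays in $I$ (value $0 = 0+0$), while a mixed sum $a +^* b^\sharp$ or $b^\sharp +^* a$ lands in $I^\sharp$ (value $1 = 0+1$). The only remaining possibility is a sum $c^\sharp +^* d^\sharp$ of two elements of $I^\sharp$, which I rule out: its existence forces $d^\sharp \leqslant (c^\sharp)^\sim = c \in I$, and since $I$ is an order ideal of $\hat I$ (Proposition \ref{pr:symmetric}) this gives $d^\sharp \in I \cap I^\sharp = \emptyset$, a contradiction. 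Hence $s$ respects $+$ and is a two-valued state.

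The main obstacle is the operation-matching in the first implication, namely checking that the concrete partial operation of $E$, transported along $i^\sharp \mapsto i'$, coincides with the unitization operation $+^*$ on $\hat I$. The clause for two elements of $I$ is immediate, and the existence conditions in the mixed clauses reduce to the order-reversal $a \leqslant b \Leftrightarrow b' \leqslant a'$; the remaining work is the value identities, such as showing $a + b' = (b\backslash a)'$ for $a \leqslant b$ from $(b\backslash a)+a = b$ together with weak commutativity. These verifications are routine but must be executed carefully, since they are exactly what pins $E$ down as the unitization of its kernel ideal and thus close the equivalence.
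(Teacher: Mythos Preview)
Your proposal is correct and follows essentially the same approach as the paper: take $I=\Ker(s)$, identify $E$ with $\hat I$ via the bijection $i\mapsto i$, $i^\sharp\mapsto i'$, and verify the operation-matching case by case; conversely, read off the two-valued state from the partition $\hat I = I\cup I^\sharp$. The only cosmetic differences are that you check symmetry of $I$ before invoking the unitization (the paper lets this fall out of the isomorphism via Proposition~\ref{pr:unitization}), and in (ii)$\Rightarrow$(i) you rule out sums $c^\sharp+^*d^\sharp$ via the order-ideal property of $I$ rather than by direct appeal to the clauses defining $+^*$.
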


\begin{proof}(i)$\Rightarrow$(ii). Assume that a  mapping $s:E\rightarrow
\{0,1\}$ is a state on $E$. If we set $I=\Ker(s),$ then $I\ne E$ is  a normal ideal of $E$, and so, it is  also  a sub-GPEA of $E$.

We now have to prove that $E=\hat{I}.$ Let $I^{\sharp}$ be the set
$\{a\in E\mid s(a)=1\}$. Since $s$ is a two-valued  state on
$E$, we have that  $I\cap I^{\sharp}=\emptyset$ and $E=I\cup
I^{\sharp}$. Define the mapping $f:\hat{I}\rightarrow E$ by
$f(x)=x,$ $f(x^{\sharp})=x^{\prime}$ for any $x\in I.$ Then $f$ is a
bijection and $f(0)=0,$ $f(0^{\sharp})=1.$ Assume  $a+ b$
exists in $\hat{I}$ for $a,$ $b\in \hat{I}.$ Then there are
following three cases. (1)\ Both $a$ and $b$ belong to $I$, then $
a+ b\in I,$ which implies $f(a+ b)=a+ b=f(a)+
f(b)$. (2)\ Only one of $a$ and $b$ belongs to $I,$ without loss of
generality, assume that $a\in I,$ $b\in I^{\sharp}.$ Then there
exists an element $c\in I$ with $b=c^{\sharp}.$ By $(c\backslash
a)+ a+ c$$^{\prime}=1,$ we have that $f(a+
b)=f(a+ c^{\sharp})=f((c\backslash a)^{\sharp})=(c\backslash
a)^{\prime}=a+ c^{\prime}=f(a)+ f(c^{\sharp})=f(a)+
f(b).$ Finally, (3)\ $a,b \in \hat I$ but this is impossible.  Hence, $f$ is a morphism. Furthermore, assume that
$f(a)\leqslant f(b)$ for $a,b\in \hat{I}.$ There are following four
cases. (1)\ If $a,$ $b\in I$, then
 $a\leqslant b$ by $f(a)=a,$ $f(b)=b.$ (2)\ If $a,$ $b\in I^{\sharp}$, then
there exist $c,$ $d\in I$ such that $a=c^{\sharp},$ $b=d^{\sharp},$
which imply that $c^{\prime}\leqslant d^{\prime}$, and so
$d\leqslant c.$ Hence, $a=c^{\sharp}\leqslant d^{\sharp}=b.$ (3)\ If
$a\in E,$ $b\in I^{\sharp},$ then there exists an element $c\in I$
with $b=c^{\sharp}.$ Therefore, $a\leqslant c^{\prime}$,  $a\leqslant
b$. (4) If $a\in I^{\sharp},$ $b\in I$,  then there exists an
element $d\in I$ with $a=d^{\sharp}.$  Therefore, $d^{\prime}\leqslant
b$, $d^{\sharp}\leqslant b$, which is impossible.  Hence, the statement $f(a)\leqslant f(b)$ implies that $a\leqslant b,$  which implies that $f$ is a monomorphism. Thus, $f$ is an isomorphism between $\hat{I}$ and $E$. Noticing that the set $\hat{I}$ equals $E$, we have the PEA  $\hat{I}$ coincides
with $E$.

Now, if there exists an ideal $J$ of $E$,
such that $I\subseteq J$ with $J\setminus I\neq\emptyset,$ then
there exists an element $i\in I$ such that $i^{\prime}\in J,$ hence,
$1\in J,$ which implies that $J=E.$

 (ii)$\Rightarrow$(i). Assume that PEA
$E=I\cup I^{\sharp},$ then $I$ is a symmetric GPEA by Proposition
\ref{pr:unitization}. Then define a mapping $s: E\rightarrow
\{0,1\}$ by setting $s(a)=0,$ $s(a^{\sharp})=1$ for any $a\in I$.
Consequently, $s(0)=0,$ $s(1)=s(0^{\sharp})=1.$ If $x,y\in E,$ and
$x+ y$ exists in $E,$ then  $x,y\in I$ or only exactly  one of
$x,y$ belongs to $I.$ If $x,y\in I,$ then $x+ y\in I$, and so
$s(x+ y)=s(x)+ s(y)=0.$ If $x\in I$ and $y\in I^{\sharp}$,
then $x+ y\in I^{\sharp}$, and so $s(x+ y)=1=s(x)+
s(y).$ Similarly, if $x\in I^{\sharp}$ and $y\in I$, then  $x+
y\in I^{\sharp}$, and so $s(x+ y)=1=s(x)+ s(y).$ Thus, $s$
is a two-valued  state on $E.$
\end{proof}

\begin{example}\label{ex:ex23}
{\rm Let $\mathbb{Z}$ be the group of integers and $G$ be a
po-group. Let $\mathbb{Z}\overrightarrow{\times}G$ be
the lexicographic product of $\mathbb{Z}$ and $G$. If we set
$E=\Gamma(\mathbb{Z}\overrightarrow{\times}G,(1,0)),$ then $E$ is a
symmetric PEA but not necessarily commutative. Set $I=\{(0,g)\in
E\mid g\in G\}$, then $I$ is a maximal and normal ideal of $E$ and
it is routine to verify that $E=I\cup I^{-}=I\cup I^{\sim}$ and
$I\cap I^{-}=I\cap I^{\sim}=\emptyset.$ Thus, the symmetric PEA $E$
admits a two-valued  state, and this state is a unique state of $E.$ }

\end{example}

In \cite{Rie08,RieMar05}, Z. Rie\v{c}anov\'{a} and I. Marinov\'{a}
studied   effect algebras with two-valued (discrete) states, and they proved that any effect algebra admitting a two-valued state is the unitization of a generalized sub-effect algebra.    Theorem
\ref{th:sym} shows that any symmetric PEA admitting a two-valued state is the unitization of a symmetric sub-GPEA, and so, it may be considered as a generalization of the results for
effect algebras proved in \cite{Rie08}.  However, for any PEA admitting a two-valued state, if it is not symmetric, then it is not a unitization of any sub-GPEA. In general, for a two-valued  state PEA, we have the following result.

\begin{theorem}\label{th:pea}
Let $E$ be a  PEA. Then $E$ admits   a two-valued  state
$s$ if and only if there exists a maximal and normal ideal $I$ such
that $E=I\cup I^{-}=I\cup I^{\sim}$ and $I\cap I^{-}=I\cap
I^{\sim}=\emptyset.$

\end{theorem}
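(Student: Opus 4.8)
The plan is to adapt the proof of Theorem \ref{th:sym} to the non-symmetric setting, the essential new feature being that the two complements $a^{-}$ and $a^{\sim}$ must now be tracked separately since they need no longer coincide. For the forward implication, given a two-valued state $s$ I would set $I=\Ker(s)$, which is a normal ideal by the remark preceding the theorem and is proper since $s(1)=1$. The decomposition $E=I\cup I^{-}=I\cup I^{\sim}$ follows from a direct complement computation: if $s(a)=1$ then from $a+a^{\sim}=1$ and $a^{-}+a=1$ we obtain $s(a^{\sim})=s(a^{-})=0$, so $a^{\sim},a^{-}\in I$; and since $a=(a^{\sim})^{-}=(a^{-})^{\sim}$ by property (ii), this exhibits $a\in I^{-}$ and $a\in I^{\sim}$. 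Disjointness $I\cap I^{-}=I\cap I^{\sim}=\emptyset$ is immediate, because $i\in I$ forces $s(i^{-})=s(i^{\sim})=1$ (from $i^{-}+i=1$ and $i+i^{\sim}=1$), so no element of $I^{-}$ or $I^{\sim}$ can lie in $I$.

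For maximality I would argue by contradiction. If a proper ideal $J$ properly contained $I$, pick $a\in J\setminus I$; then $s(a)=1$, so $a^{\sim}\in I\subseteq J$, and since $a+a^{\sim}=1$ exists with both summands in $J$, closure of an ideal under existing sums yields $1\in J$, contradicting properness. Hence $I$ is maximal, and the normal ideal $I=\Ker(s)$ satisfies all the stated conditions.

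For the converse, given such an $I$ I would define $s\colon E\to\{0,1\}$ by $s(a)=0$ for $a\in I$ and $s(a)=1$ otherwise; this is well-defined, $s(0)=0$ since $0\in I$, and $s(1)=1$ since $I$ is proper. To verify that $s$ is a morphism I distinguish the cases in which $a+b$ exists. If $a,b\in I$ then $a+b\in I$, so $s(a+b)=0=s(a)+s(b)$. If exactly one summand lies in $I$, then $a+b\notin I$: indeed $a\leqslant a+b$ and $b\leqslant a+b$ (the latter by (PE3)), so $a+b\in I$ would drag the non-$I$ summand into $I$ by downward closure, a contradiction; hence $s(a+b)=1=s(a)+s(b)$ in $\{0,1\}$.

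The main obstacle is the remaining case $a,b\notin I$, where $s(a)+s(b)=2\notin[0,1]$; here one must show that $a+b$ simply does not exist, so that the morphism condition becomes vacuous. I would use that $a\notin I$ gives $a^{\sim}\in I$ (via $E=I\cup I^{-}$, whence $a=i^{-}$ and $a^{\sim}=i\in I$): if $a+b$ existed, then $a+b\leqslant 1=a+a^{\sim}$, and writing $(a+b)+x=a+a^{\sim}$ and applying associativity (PE1) together with left cancellation (GP3) would give $b\leqslant a^{\sim}\in I$, forcing $b\in I$ by downward closure, a contradiction. This cancellation step, which converts the ``value $>1$'' intuition into the nonexistence of the sum, is the crux of the argument; the rest is bookkeeping parallel to Theorem \ref{th:sym}, and normality of $I$ is simply carried along rather than used in the converse.
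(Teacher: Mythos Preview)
Your proof is correct and follows essentially the same route as the paper's: set $I=\Ker(s)$ for the forward direction and define $s$ as the characteristic function of $E\setminus I$ for the converse, handling the three cases for $x+y$ in the same way (in particular, the ``both summands outside $I$'' case is dispatched via $b\leqslant a^{\sim}\in I$, which is exactly the paper's $b^{-}\leqslant a^{-\sim}=a$ argument in slightly different notation). Your remark that normality of $I$ is not actually used in the converse is also accurate and matches the paper, which likewise carries the hypothesis along without invoking it.
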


\begin{proof}  Assume that  $E$ is a PEA admitting a  two-valued  state $s$,
then   for any $x\in E,$   either $s(x)=0$, or $s(x)=1.$ Set
$I=\Ker(s)$,  we have that    $E=I\cup(E\setminus I)$. For any $x\in
I$, $s(x^{-})=1,$ hence, we have that $I^{-}\subseteq E\setminus I.$
Conversely, for any $y\in E\setminus I,$ we have that $s(y)=1,$ and
so $s(y^{\sim})=0,$ which implies $y^{\sim}\in I.$ Noticing that
$y=y^{\sim-},$ we have that $y\in I^{-},$ and so $E\setminus
I\subseteq I^{-}.$ Hence, $E\setminus I=I^{-}.$ Similarly, we can
prove that $E\setminus I= I^{\sim}.$ It is obvious that $I\cap
I^{-}=I\cap I^{\sim}=\emptyset.$ In the same way as in Theorem \ref{th:sym}, we can prove that $I$ is a maximal and normal ideal of $E$.

Conversely, we assume that there exists a normal ideal $I$ such that
$E=I\cup I^{-}=I\cup I^{\sim}$ and $I\cap I^{-}=I\cap
I^{\sim}=\emptyset.$ Define a mapping $s:E\rightarrow \{0,1\}$ as
follows:

\begin{displaymath}
s(x)=\left\{
\begin{array}{ll}
0, & \textrm{$ x\in I$,}\\
1, & \textrm{otherwise.}\qquad\qquad
\end{array} \right.
\end{displaymath}

It is easy to see that $s$ is well defined and $s(0)=0,$ $s(1)=1.$
Now,  assume that  $x+ y$ exists in $E$ for   $x,y\in E,$ then
there are the following three cases:

(i)\ $x,y\in I$. Then  $x+ y\in I$, since $I$ is an ideal of
$E$. Therefore, $s(x+ y)=s(x)+ s(y)=0.$

(ii)\ Only one of $x$ and $y$ belongs to $I$; without loss of
generality, we assume that $x\in I$ and $y\notin I$. Then $x+
y\notin I$, since $I$ is an ideal of $E$. Consequently, $s(x+
y)=s(x)+ s(y)=1.$

(iii)\ $x\notin I$ and $y\notin I.$ Now we assume that there exist
$a,b\in I$ with $x=a^{-}$ and $y=b^{-}.$ Then $a^{-}+ b^{-}$
exists, which implies $b^{-}\leqslant a^{-}$$^{\sim}=a$. Hence,
$y=b^{-}\in I,$ which is a contradiction with $y\notin I.$ Thus, if
$x+ y$ exists in $E$, then at least one of  $x $ and $y$
belongs to $I.$

Hence, we have proved that  for $x,y\in E,$ $s(x+ y)=s(x)+
s(y),$ whenever $x+ y$ exists in $E$. This yields that the mapping $s$
is a two-valued  state on $E.$
\end{proof}

\begin{example}\label{thm:level1}
{\rm Let  $E$ be the PEA $\Gamma(G,(1,0,0))$ in Example
\ref{ex:ex21}. Assume that  $s:E\rightarrow [0,1]$ is a state on
$E$. Notice that, for any $(0,b,c)\in E,$ $n(0,b,c)$ exists in $E$ for
any $n\in \mathbb{N}$. Hence, we have $s(0,b,c)=0.$  Then
$\Ker(s)=\{(0,b,c)|(0,b,c)\in E\}$ which is a normal ideal of $E$.
Furthermore,  for any $(0,b,c)\in E,$ it is easy to see that
$(0,b,c)^{-}= (1,-b,-c)),$ $(0,b,c)^{\sim}= (1,-c,-b)),$ which
implies that
$E=\Ker(s)\cup(\Ker(s))^{-}=\Ker(s)\cup(\Ker(s))^{\sim}$ and $
\Ker(s)\cap(\Ker(s))^{-}=\Ker(s)\cap(\Ker(s))^{\sim}=\emptyset.$ Therefore,
the state $s$ of $E$ is  two-valued, and this state is a unique state of $E.$}
\end{example}

\section{Pseudo-effect algebras with $(n+1)$-valued discrete states}%4

In this section,   we give   sufficient and necessary
conditions in order  a pseudo-effect algebra admits an $(n+1)$-valued state. In addition, some properties of  pseudo-effect algebras having an $(n+1)$-valued state are studied.

Let $E$ be a PEA and $A,B\subseteq E.$ In the following, we write (i)
$A\leqslant B$ iff $a\leqslant b$ for all $a\in A,$ and all $b\in B,$ (ii) $A+B:=\{a+b\mid  a \in A,\ b \in B$ and $a+b$ exists in $E\}.$ It can happen that $a+b$ exists in $E$ for any $a\in A$ and any $b\in B.$ Then we are saying that $A+B$ {\it exists in} $E$.

We write $1A:=A.$ If $A+ A$ exists, then
we denote $2A=A+ A.$ If $iA$ exists, and $iA+ A$ exits,
then we denote $(i+1)A=iA+ A$ for $i\geqslant 2.$

\begin{theorem}\label{th:nstate}
Let  $(E;+,0,1)$ be a  PEA. Then the following two statements
are equivalent.

{\rm (i)} There exists an $(n+1)$-valued discrete state on $E$.

{\rm (ii)} There exist nonempty  subsets $E_{0}, E_{1},\ldots,
E_{n}$ of
 $E$ such that

{\rm (a)} $E_{i}\cap E_{j}=\emptyset,$ for any $i,j\in
\{0,1,\ldots,n\}$ with $i\neq j,$

{\rm (b)} $E=E_{0}\cup E_{1}\cup\cdots\cup E_{n},$

{\rm (c)} $E_{i}^{-}=E_{i}^{\sim}=E_{n-i}$ for any $i\in
\{0,1,\ldots,n\},$

{\rm (d)} if $x\in E_{i},$ $y\in E_{j}$ and $x+ y$ exists in
$E$, then $i+j\leqslant n$ and $x+ y\in E_{i+j}$ for
$i,j\in\{0,1,\ldots,n\}.$
\end{theorem}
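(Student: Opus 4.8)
The plan is to pass back and forth between a state and the partition given by its level sets. For the direction (i) $\Rightarrow$ (ii) I would start from an $(n+1)$-valued discrete state $s$, so by definition $s(E)=\{0,\frac1n,\ldots,\frac{n-1}{n},1\}$, and set $E_{i}:=\{x\in E\mid s(x)=i/n\}$ for $i=0,1,\ldots,n$. Nonemptiness of each $E_i$ is immediate since $s$ is surjective onto its $n+1$ values, and conditions (a), (b) hold because $s$ is a single-valued total function on $E$. Condition (c) follows from the identities $x^{-}+x=1=x+x^{\sim}$: applying the additive map $s$ gives $s(x^{-})=s(x^{\sim})=1-s(x)$, so $x\in E_{i}$ forces $x^{-},x^{\sim}\in E_{n-i}$, i.e. $E_i^-,E_i^\sim\subseteq E_{n-i}$; the reverse inclusions come from the involution identities $(a^{-})^{\sim}=(a^{\sim})^{-}=a$ recorded after the definition of the unary operations $a\mapsto a^{-}$, $a\mapsto a^{\sim}$. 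Finally (d) is just $s(x+y)=s(x)+s(y)=(i+j)/n$, together with the fact that a state takes values in $[0,1]$, which forces $i+j\leqslant n$ and places $x+y$ in $E_{i+j}$.

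For the converse (ii) $\Rightarrow$ (i) I would define $s\colon E\rightarrow[0,1]$ by $s(x):=i/n$ whenever $x\in E_{i}$. This is well defined and total by (a) and (b), and $s(E)=\{0,\frac1n,\ldots,1\}$ has exactly $n+1$ elements because each $E_{i}$ is nonempty; hence $s$ will be an $(n+1)$-valued discrete state as soon as it is shown to be a morphism. Additivity is read directly off (d): if $x+y$ exists with $x\in E_{i}$, $y\in E_{j}$, then (d) gives $i+j\leqslant n$ and $x+y\in E_{i+j}$, so $s(x+y)=(i+j)/n=s(x)+s(y)$, and the inequality $i+j\leqslant n$ guarantees that this sum is legal in the effect algebra $[0,1]=\Gamma(\mathbb{R},1)$.

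The one point needing a genuine argument — and the step I expect to be the only real obstacle — is verifying $s(0)=0$ and $s(1)=1$, since the hypotheses (a)--(d) never say where $0$ and $1$ sit. Here I would exploit idempotency of $0$: suppose $0\in E_{k}$; since $0+0=0$ always exists, condition (d) applied to $x=y=0$ yields $0=0+0\in E_{2k}$, and disjointness (a) forces $k=2k$, i.e. $k=0$, so $0\in E_{0}$ and $s(0)=0$. Then $1=0^{-}$ lies in $E_{0}^{-}=E_{n}$ by (c), giving $s(1)=n/n=1$. With these boundary values in place, $s$ meets all requirements of a morphism $E\rightarrow[0,1]$, which completes the equivalence.
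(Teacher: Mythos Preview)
Your proof is correct and follows the same strategy as the paper's: in both directions one passes between the state $s$ and the partition $E_i=s^{-1}(\{i/n\})$, reading (a)--(d) off the properties of $s$ and conversely. The only place you diverge is in locating $0$: you argue that if $0\in E_k$ then $0=0+0\in E_{2k}$, so disjointness forces $k=2k=0$; the paper instead takes any $x\in E_n$, notes that $x+0$ exists, and uses (d) to get $k+n\leqslant n$, hence $k=0$. Both arguments are valid and the rest of your proof matches the paper essentially line for line.
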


\begin{proof}
Assume that $s$ is an $(n+1)$-valued discrete state on $E,$ then we
set $E_{i}=s^{-1}(\{\frac{i}{n}\})$ for any $i\in \{0,1,\ldots,n\}.$
It is easy to see that (a) and (b) hold. For (c),  $x\in E_{i}$
if and only if  $s(x)=\frac{i}{n}$ if and only if
$s(x^{-})=s(x^{\sim})=\frac{n-i}{n}$, which entails  that the
statement (c) holds. For (d), assume that $x\in E_{i},$ $y\in E_{j}$
and $x+ y$ exists in $E$, then we have that $s(x)=\frac{i}{n}$,
$s(y)=\frac{j}{n}$ and $s(x+
y)=s(x)+s(y)=\frac{i+j}{n}\leqslant1,$ which implies that
$i+j\leqslant n$ and $x+ y\in E_{i+j}.$

Conversely, define a mapping $s:E\rightarrow [0,1]$ by
$s(x)=\frac{i}{n}$ if $x\in E_{i}.$ It is clear that $s$ is
well-defined and $s(E)=\{0,\frac{1}{n},\ldots,\frac{n-1}{n},1\}$.
Take $x,y\in E$ such that $x+ y$ is defined in $E.$ Then there
are unique integers $i$ and $j$ such that $x\in E_{i}$ and $y\in
E_{j}$. By (d), we have that $i+j\leqslant n$ and $x+ y\in
E_{i+j}$. Hence, $s(x+ y)=s(x)+s(y).$ Furthermore, there is a
unique $i\in\{0,1,\ldots,n\}$ such that $0\in E_{i}$. For any $x\in
E_{n},$ $x+ 0$ and $0+ x$ exist, and so, by (d)
$i+n\leqslant n,$ which implies $i=0.$ Thus, $0\in E_{0}$ and $1\in
E_{n}$ by (c). Hence, $s(0)=0,$ and $s(1)=1.$ Thus, $s$ is an
$(n+1)$-valued discrete state on $E$.
\end{proof}

Let $E$ be a PEA and $n\ge $ be an integer. If
subsets $E_{0},\ldots,E_{n}$ of $E$ satisfy the conditions (a)--(d)
in Theorem \ref{th:nstate},  we say that $E_{0},\ldots,E_{n}$ is
an {\it $n$-decomposition} of $E$ and  we shall denote it
by $(E_{0},\ldots,E_{n}).$ Let $\mathcal{D}_{n}(E)=\{(E_{0},\ldots,E_{n})\mid$
$(E_{0},\ldots,E_{n})$ is an $n$-decomposition of $E\}$ and
$\mathcal{S}_{n}(E)=\{s\mid s $ is an $(n+1)$-valued discrete state
on $E\}$.

\begin{theorem}\label{th:uniq}
 Let  $(E;+,0,1)$ be a  PEA and $n\ge 1$ be an integer. Then there is a bijective mapping between
$\mathcal{D}_{n}(E)$ and $\mathcal{S}_{n}(E)$.
\end{theorem}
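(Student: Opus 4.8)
The plan is to exhibit an explicit bijection between the set $\mathcal{D}_n(E)$ of $n$-decompositions and the set $\mathcal{S}_n(E)$ of $(n+1)$-valued discrete states, and this bijection is already essentially contained in the proof of Theorem~\ref{th:nstate}. First I would define a map $\Phi:\mathcal{S}_n(E)\to\mathcal{D}_n(E)$ by sending a state $s$ to the tuple $(E_0,\ldots,E_n)$ where $E_i:=s^{-1}(\{\frac{i}{n}\})$; the verification that this tuple satisfies conditions (a)--(d) is exactly the forward implication (i)$\Rightarrow$(ii) of Theorem~\ref{th:nstate}, so nothing new is needed there. Conversely, I would define $\Psi:\mathcal{D}_n(E)\to\mathcal{S}_n(E)$ by sending $(E_0,\ldots,E_n)$ to the state $s$ given by $s(x)=\frac{i}{n}$ whenever $x\in E_i$; that this is a well-defined $(n+1)$-valued discrete state is precisely the converse implication (ii)$\Rightarrow$(i) of the same theorem.

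The substance of the proof then reduces to checking that $\Phi$ and $\Psi$ are mutually inverse. For $\Psi\circ\Phi=\mathrm{id}$, starting from a state $s$, the associated decomposition has blocks $E_i=s^{-1}(\{\frac{i}{n}\})$, and the state $\Psi(E_0,\ldots,E_n)$ reconstructed from it assigns to $x\in E_i$ the value $\frac{i}{n}$, which is exactly $s(x)$ because $x\in s^{-1}(\{\frac{i}{n}\})$ forces $s(x)=\frac{i}{n}$; hence $\Psi(\Phi(s))=s$. For $\Phi\circ\Psi=\mathrm{id}$, starting from a decomposition $(E_0,\ldots,E_n)$, the reconstructed state $s=\Psi(E_0,\ldots,E_n)$ satisfies $s^{-1}(\{\frac{i}{n}\})=E_i$; here I would use condition (a), which guarantees the blocks are pairwise disjoint, so that the level set of $s$ at $\frac{i}{n}$ consists of exactly those $x$ lying in $E_i$ and in no other block, giving $\Phi(\Psi(E_0,\ldots,E_n))=(E_0,\ldots,E_n)$.

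The one point deserving slight care, and the closest thing to an obstacle, is the well-definedness of $\Psi$ together with the injectivity that the disjointness guarantees: for the reconstructed state to be unambiguous, each $x\in E$ must lie in exactly one block, which is ensured by the combination of the covering condition (b) and the pairwise-disjointness condition (a). Once this is noted, $s$ is genuinely a function on $E$, and the argument that it is a morphism into $[0,1]$ with range $\{0,\frac{1}{n},\ldots,1\}$ is supplied verbatim by Theorem~\ref{th:nstate}. I would therefore keep the proof short, invoking Theorem~\ref{th:nstate} for both directions of well-definedness and only spelling out the two composition identities to confirm bijectivity.
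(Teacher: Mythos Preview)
Your proposal is correct and follows essentially the same approach as the paper. The only cosmetic difference is that the paper defines a single map $f:\mathcal{D}_n(E)\to\mathcal{S}_n(E)$ (your $\Psi$) and checks injectivity and surjectivity directly, whereas you construct both $\Psi$ and its inverse $\Phi$ and verify the two composition identities; the underlying content---well-definedness via Theorem~\ref{th:nstate} and the use of the level sets $s^{-1}(\{\frac{i}{n}\})$---is identical.
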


\begin{proof}
We define a mapping
$f:\mathcal{D}_{n}(E)\rightarrow\mathcal{S}_{n}(E)$ as follows: for
any $D=(E_{0},\ldots,E_{n})\in \mathcal{D}_{n}(E),$ $f(D)=s,$ where
$s:E\rightarrow [0,1]$ is a state such that $s(E_{i})=\frac{i}{n}$
for any $i\in \{0,\ldots,n\}.$ Assume that there exists another
state $s_{1}$ on $E$ such that $s_{1}(E_{i})=\frac{i}{n}$ for any
$i\in \{0,\ldots,n\}.$  For any $x\in E,$ there exists a unique
$i\in  \{0,\ldots,n\}$ such that $x\in E_{i}$ which implies that
$s(x)=s_{1}(x).$ Thus, $f$ is defined well.  Now, for any $D=(E_{0},
\ldots, E_{n}),$ and $D_{1}=(F_{0}, \ldots, F_{n})$, if
$f(D)=f(D_{1})=s$, then $s(E_{i})=s(F_{i})=\frac{i}{n}$ for any
$i\in\{0,\ldots,n\}.$ Hence, $s^{-1}(\{\frac{i}{n})\}=E_{i}=F_{i}$ for
any $i\in\{0,\ldots,n\},$ and so $D=D_{1},$ which implies that $f$ is  injective.  By Theorem \ref{th:nstate}, $f$ is surjective. Thus, $f$ is bijective.
\end{proof}

\begin{corollary}\label{co:ideal}
Let  $(E;+,0,1)$ be a  PEA. If $(E_0,E_1,\ldots,E_n)$ is an
$n$-decomposition of $E$, then $E_{0}$ is a
normal ideal.
\end{corollary}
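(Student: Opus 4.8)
The plan is to exploit the correspondence between $n$-decompositions and $(n+1)$-valued discrete states furnished by Theorem \ref{th:nstate}. Given the $n$-decomposition $(E_0,\ldots,E_n)$, that theorem produces a state $s$ with $s(x)=\frac{i}{n}$ whenever $x\in E_i$; in particular $E_0=s^{-1}(\{0\})=\Ker(s)$. Since it was already observed in Section 3 that the kernel of any state on a PEA is a normal ideal, the conclusion follows at once. I would, however, also record a direct verification from conditions (a)--(d), since it is short and makes the statement self-contained.

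For the direct argument I would check the two ideal axioms and the normality condition separately. Closure under addition is immediate: if $i,j\in E_0$ and $i+j$ exists, then by (d) $i+j\in E_{0+0}=E_0$. For downward closure, suppose $a\leqslant x$ with $x\in E_0$, say $a+c=x$ for some $c\in E$ (the case $c+a=x$ is symmetric). Writing $a\in E_k$ and $c\in E_m$, condition (d) gives $a+c\in E_{k+m}$, while $a+c=x\in E_0$; by the disjointness (a) this forces $k+m=0$, hence $k=m=0$ since the indices are nonnegative, so $a\in E_0$.

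For normality, suppose $a+i$ and $j+a$ both exist and are equal, with $a\in E_p$, $i\in E_q$, $j\in E_r$. Applying (d) to each product gives $a+i\in E_{p+q}$ and $j+a\in E_{r+p}$. Since these two elements coincide and the slices are disjoint by (a), we obtain $p+q=r+p$, whence $q=r$. Thus $i$ and $j$ always lie in the same slice, and in particular $i\in E_0$ if and only if $j\in E_0$, which is exactly normality of $E_0$.

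There is no real obstacle here; the only point demanding minor care is the downward-closure step, where one must apply (d) in the correct order (according to whether $a\leqslant x$ is witnessed by $a+c=x$ or by $c+a=x$) and invoke the nonnegativity of the indices to pass from $k+m=0$ to $k=0$. Everything else is a direct reading of conditions (a)--(d), and since $0\in E_0$ the set $E_0$ is nonempty as required of an ideal.
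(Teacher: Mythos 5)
Your first paragraph is precisely the paper's own proof: Theorem \ref{th:nstate} produces an $(n+1)$-valued discrete state $s$ with $s(E_i)=\frac{i}{n}$, so that $E_0=s^{-1}(\{0\})=\Ker(s)$, and the paper then invokes the fact, stated in Section 3, that the kernel of any state is a normal ideal. Your supplementary direct verification from (a)--(d) is also correct and is a genuinely different, more elementary route: closure under $+$ is (d) with $i=j=0$; downward closure works because $a+c=x\in E_0$ with $a\in E_k$, $c\in E_m$ gives $x\in E_{k+m}$ by (d), whence $k+m=0$ by the disjointness (a) and $k=m=0$ by nonnegativity; and your normality argument actually proves something stronger than needed, namely that whenever $a+i=j+a$ the elements $i$ and $j$ lie in the \emph{same} slice, since $p+q=r+p$ forces $q=r$. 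What the direct argument buys is self-containment: the paper never proves that $\Ker(s)$ is a normal ideal (it is merely asserted as an example in Section 3), so your verification derives the corollary from the decomposition axioms alone. The only step you assert without proof is $0\in E_0$, but it is a one-line check in the same spirit: if $0\in E_i$, then $0=0+0\in E_{2i}$ by (d), and disjointness gives $2i=i$, i.e., $i=0$.
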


\begin{proof}
By Theorem \ref{th:nstate}, there exists an $(n+1)$-valued discrete
state $s$ such that $E_{0}=\Ker(s)$, and so it is a normal ideal.
\end{proof}

\begin{remark}\label{re:nounique}
{\rm Assume that a PEA $(E;+,0,1)$ admits an $(n+1)$-valued discrete state, $s, $ then by Theorem \ref{th:uniq}, there exists a unique $n$-decomposition  $(E_{0}, E_{1},\ldots, E_{n})$ of $E$ such that $s(E_i)=\frac{i}{n},$ $i=0,1,\ldots,n.$  We note that:

(i) For $i,j\in \{0,1,\ldots,n\}$ with $i\leqslant j,$
$E_{i}\leqslant E_{j}$ does not hold in general. For example, the four element Boolean algebra $E=\{0,a,a^{\prime},1\}$ admits a 2-valued discrete state $s$ such
that $s(0)=s(a)=0, s(a^{\prime})=s(1)=1$. Set $E_{0}=\{0,a\}$, $E_{1}=\{0,a^{\prime}\}$, then  $E_{0}\not\leqslant E_{1}.$

(ii) In general, for $i,j\in \{0,1,\ldots,n\}$,   $E_{i}+ E_{j}$ does not
exist when $i+ j<n$. Even $E_{0}+ E_{0}$ does not exist.  For example, the four element Boolean algebra $E=\{0,a,a^{\prime},1\}$ admits a two-valued  state $s$ such
that $s(0)=s(a)=0, s(a^{\prime})=s(1)=1$. Set  $E_{0}=\{0,a\}$, $E_{1}=\{0,a^{\prime}\}$, then  $E_{0}+ E_{0}$ does not exists in $E.$

(iii) By Theorem \ref{th:pea}, if $n=1,$ then  the ideal $E_{0}$ is maximal. However, if $n\geqslant2,$ then $E_{0}$ is not necessarily  maximal. For example, the four element Boolean algebra $E=\{0,a,a^{\prime},1\}$ admits a 3-valued discrete state $s$ such
that $s(0)=0, s(a)=s(a^{\prime})=\frac{1}{2},s(1)=1$. But
$E_{0}=\{0\}$ is not a maximal ideal of $E.$
 }
\end{remark}

\begin{theorem}\label{th:infinit}
Let  $(E_{0}, E_{1},\ldots, E_{n})$ be an $n$-decomposition of a PEA $E$. Then $E_{0}\leqslant E_{1}\leqslant\cdots\leqslant E_{n}$ if and only if $E_{i}+ E_{j}$ exists in $E$
whenever $i+j<n$ for any $i,j\in \{0,\ldots,n\}.$

In such a case,
\begin{itemize}

\item[{\rm (i)}] $E_{0}=\mbox{\rm Infinit}(E)$ and $\mbox{\rm Infinit}(E)$ is a normal ideal.

\vspace{-2mm}
\item[{\rm (ii)}] $E_{i}+ E_{j}=E_{i+j}$ whenever $i+j<n$.

\vspace{-2mm}
\item[{\rm (iii)}] For any $x\in E_{i},$ $y\in E_{j}=E_{i+j},$ if $i+j>n$,
then  neither $x+ y$ nor $y+ x$ exists.
\end{itemize}

\end{theorem}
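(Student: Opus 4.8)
The plan is to establish the equivalence first and then read off the three consequences. Throughout I will use the $(n+1)$-valued discrete state $s$ attached to the decomposition by Theorem \ref{th:nstate}, so that $E_i=s^{-1}(\{i/n\})$, together with the elementary fact that in a PEA the sum $x+y$ exists if and only if $y\leqslant x^{\sim}$ (this follows from $x+x^{\sim}=1$ together with cancellation). I also note that the chain condition $E_0\leqslant\cdots\leqslant E_n$ is transitive on elements, hence equivalent to $E_a\leqslant E_b$ for all $a\leqslant b$.

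For the implication ``order $\Rightarrow$ existence'', I would fix $x\in E_i$ and $y\in E_j$ with $i+j<n$. By (c) we have $x^{\sim}\in E_{n-i}$, and since $i+j<n$ gives $j\leqslant n-i$, the chain yields $E_j\leqslant E_{n-i}$, so $y\leqslant x^{\sim}$; therefore $x+y$ exists. As $x,y$ are arbitrary, $E_i+E_j$ exists.

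The converse, ``existence $\Rightarrow$ order'', is the step I expect to be the main obstacle, since the order has to be recovered from existence of sums alone. It suffices to prove $E_i\leqslant E_{i+1}$ for every $i$. Fix $x\in E_i$ and $z\in E_{i+1}$. By (c), $z^{-}\in E_{n-i-1}$, and because $(n-i-1)+i=n-1<n$, the hypothesis makes $z^{-}+x$ exist, with $z^{-}+x\in E_{n-1}$ by (d). Setting $v=(z^{-}+x)^{\sim}\in E_1$, we get $(z^{-}+x)+v=1=z^{-}+z$; associativity (PE1) rewrites the left-hand side as $z^{-}+(x+v)$, and left cancellation (GP3) gives $x+v=z$, i.e.\ $x\leqslant z$. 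This yields $E_i\leqslant E_{i+1}$, and transitivity gives the whole chain.

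For the consequences I assume the equivalent conditions. Statement (iii) is immediate from (d): if $x+y$ or $y+x$ existed with $x\in E_i$, $y\in E_j$ and $i+j>n$, then (d) would force $i+j\leqslant n$, a contradiction. For (ii), the inclusion $E_i+E_j\subseteq E_{i+j}$ is again (d), while for the reverse I take $z\in E_{i+j}$ and any $x\in E_i$ (nonempty by the definition of a decomposition); the chain gives $x\leqslant z$, so $z=x+y$ for some $y$, and applying $s$ forces $s(y)=j/n$, hence $y\in E_j$ and $z\in E_i+E_j$. For (i), the inclusion $E_0\subseteq\mathrm{Infinit}(E)$ follows by induction: since $E_0+E_0$ exists (as $0<n$), every multiple $kx$ of $x\in E_0$ exists and lies in $E_0$; conversely if $x\in E_i$ with $i\geqslant1$ then $s(kx)=ki/n$ cannot exceed $1$, bounding $k$, so $\imath(x)<+\infty$ and $\mathrm{Infinit}(E)\subseteq E_0$. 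Thus $E_0=\mathrm{Infinit}(E)$, which is a normal ideal by Corollary \ref{co:ideal}.
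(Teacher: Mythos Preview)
Your proof is correct and follows essentially the same route as the paper's: both directions hinge on the characterization of summability via complements, and the consequences are read off from property~(d) together with the attached state~$s$. One remark on economy: in your backward step, once you know $z^{-}+x$ exists you may conclude $x\leqslant (z^{-})^{\sim}=z$ immediately from the very fact you stated at the outset, making the detour through $v$ unnecessary; the paper proceeds in exactly this shorter way, writing ``$E_i+E_{n-i-1}$ exists $\Rightarrow E_i\leqslant E_{n-i-1}^{-}=E_{i+1}$''.
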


\begin{proof} By Theorem \ref{th:nstate}, there is a unique discrete $(n+1)$-valued state $s$ such that $s(E_i)=\frac{i}{n}$ for $i=0,1,\ldots,n.$

Assume $E_{0}\leqslant E_{1}\leqslant\cdots\leqslant E_{n}$. For any $i,j\in \{0,\ldots,n-1\}$ with $i+j<n,$ we have that $i<n-j$, and so $E_{i}\leqslant E_{j}^{-},$ which implies that $E_{i}+
E_{j}$ exists and so $E_{i}+ E_{j}=E_{i+j}.$ In fact, for any $a\in E_{i},$ $b\in E_{j},$  then $s(a+ b)=\frac{i+j}{n}$,
which implies that $a+ b\in E_{i+j}.$ Conversely, let $c\in E_{i+j}.$ For any $a\in E_{i}$, we have that $a\leqslant c.$ Then there exists an element  $b\in E$ such that $a+ b=c.$ Whence, $s(a+ b)=s(a)+s(b)=\frac{i+j}{n},$ then $s(b)=\frac{j}{n}$, which implies that $b\in E_{j}.$ We have also proved (ii).

Conversely, let $E_i + E_j$ exist in $E$ for $i+j<n.$

(i) For any $x, y\in E_{0},$ we have that $x+ y$ exists in $E.$  Then $s(x+y)=s(x)+s(y)=0$ and $x+y \in E_0$ which implies
$E_{0}\subseteq \mbox{\rm Infinit}(E).$
Conversely, let $x\in \mbox{\rm Infinit}(E),$ we have that $mx$ is defined in $E$ for each integer $m\ge 1.$ Then $s(mx)=ms(x) \leqslant 1$ which implies $s(x)=0$ and $x\in \Ker(s),$ and so $x\in E_{0}.$

For  $i,j\in \{0,1,\ldots,n-1\}$, if $i+j< n$, then
$E_{i}+ E_{j}$ exists in $E$, and so  $E_{i}\leqslant
E_{j}^{-}=E_{n-j}.$ Now, for $i\in \{0,1,\ldots,n-1\},$ set
$j=n-i-1$, we have that $i+j< n$, and so we have that
$E_{i}\leqslant E_{n-j}^{-}=E_{i+1},$ which proves $E_{0}\leqslant E_{1}\leqslant\cdots\leqslant E_{n}.$

(iii) Assume that $a\in E_{i}$ and $b\in E_{j}$ for $i+j<n.$ Then
$a+ b$ exists and $s(a+ b)= \frac{i+j}{n},$ and so
$a+ b\in E_{i+j}.$ Conversely, let $z\in E_{i+j},$ then for any
$x\in E_{i}$, $x\leqslant z$, so that $z=x+(x/z),$ by
$s(z)=s(x)+s(x/z),$ which implies that $x/z\in E_{j}.$

(iv) Assume that $i+j>n$, $x\in E_{i},$ $y\in E_{j}=E_{i+j},$ either
$x+ y$  or $y+ x$ exists, then $s(x+ y)>1$ or
$s(y+ x)>1$, which is absurd.
\end{proof}

\begin{example}\label{ex:eperfect}
{\rm  Let $D$ be the set $\{0,a,b,1\}$. Let a partial operation
$+_{D}$ on $B$ be defined  as follows:
$a+_{D} a=b+_{D} b=1,$ $0+_{D} a=a+_{D} 0=a,$
$0+_{D} b=b+_{D} 0=b,$ $1+_{D} 0=0+_{B} 1=1.$
Then the algebraic system $(D;+_{D},0,1)$ is an effect algebra, which is usually called the diamond.
Let $E_{0}=\{(0,i)\mid i\in \mathbb{Z}^{+}\}$, $E_{1}=\{(a,i)\mid
i\in \mathbb{Z} \}\cup \{(b,j)\mid j\in \mathbb{Z}\}$,
$E_{2}=\{(1,-i)\mid i\in \mathbb{Z}^{+}\},$ and $E=E_{0}\cup E_{1}
\cup E_{2}.$   We define a partial binary operation $+$ on
$E$ as follows:

(i) for any $x=(0,i),$ $y=(0,j)\in E_{0},$ $x+ y=(0,i+j).$

(ii) for any $x=(0,i)\in E_{0},$ $y=(a,j)\in E_{1},$ then $x+
y=y+ x=(a,i+j).$
For any $x=(0,i)\in E_{0},$ $z=(b,j)\in E_{1},$ then $x+ z=z+ x=(b,i+j).$

It is routine to verify that $(E;+,0,1)$ is an effect algebra,
where $0$ and $1$ denote $(0,0)$ and $(1,0)$, respectively. A
mapping $s:E\rightarrow [0,1]$  such that $s(E_{i})=\frac{i}{2}$ for $i=0,1,2$ is a 3-valued discrete state.

The following statements are true.

(1) $E_{0}=E_{0}+ E_{0},$ $E_{1}=E_{0}+ E_{1}.$

(2) Any of the following sum   $E_{0}+
E_{2}$, $E_{1}+ E_{1}$, $E_{1}+ E_{2}$ does not exist.

(3) $E_{0}\leqslant E_{1}\leqslant E_{2}.$

(4)  $E_{0}=\mbox{\rm Infinit}(E)$ and $E_{0}$ is  a maximal ideal.
  }
\end{example}

\begin{example}\label{ex:eperfect1}
{\rm Let $B$ be the set $\{0,a,b,1\}$. Let a partial operation
$+_{B}$ on $B$ be defined as follows:
$a+_{B} b=b+_{B} a=1,$ $0+_{B} a=a+_{B} 0=a,$
$0+_{B} b=b+_{B} 0=b,$ $1+_{B} 0=0+_{B} 1=1.$
Then the algebraic system $(B;+_{B},0,1)$ is an effect algebra.
Let $(G,u)$ be a  po-group with strong unit $u$. Let
$E_{0}=\{(0,i)\mid i\in G^{+}\}$, $E_{1}=\{(a,i)\mid i\in G \}\cup
\{(b,j)\mid j\in G\}$, $E_{2}=\{(1,-i)\mid i\in G^{+}\},$ and
$E=E_{0}\cup E_{1} \cup E_{2}.$   We define a partial binary
operation $+$ on $E$ as follows:

\begin{itemize}
\item[(i)] for any $x=(0,i), y=(0,j)\in E_{0},$ $x+ y$ exists and
$x+ y=(0,i+j),$

\vspace{-2mm}\item[(ii)] for any $x=(a,i),$ $y=(b,j)\in E_{1},$  if $i+j\leqslant 0$,
then $x+ y$ exists and  $x+ y=(1,i+j),$

\vspace{-2mm}\item[(iii)] for any $x=(0,i)\in E_{0},$ $y=(a,i),$ $z=(b,j)\in E_{1},$
$x+ y,$ $y+ x,$ $x+ z,$ and  $z+ x $ exist, and
$x+ y=(a,i+j),$ $y+ x=(a,j+i),$ $x+ z=(b,j),$
$z+ x=(b,j+i).$
\end{itemize}
It is routine to verify that $(E;+,0,1)$ is a PEA, where $0 $
and  $1$ denote $(0,0)$ and $(1,0)$, respectively.

It is easy to see that $E_{i}+ E_{j}$ exists for $i+j<2.$

We have $E_{0}=\mbox{\rm Infinit}(E),$ however, $E_{0}$ is not a maximal ideal. If
we set $I_{a}=E_{0}\cup\{(a,i)\mid (a,i)\in E_{1}\}$ and
$I_{b}=E_{0}\cup\{(b,j)\mid (b,j)\in E_{1}\}$, then both $I_{a}$ and
$I_{b}$ are   proper normal ideals, and $E_{0}\subsetneq I_{a},$ $
E_{0}\subsetneq I_{b}.$ In fact, $\{I_{a},I_{b}\}$ is the set of
maximal ideals of $E,$ and $E_{0}=I_{a}\cap I_{b}.$ }
\end{example}

\section{$n$-perfect  PEA}%5

We now give the definition of $n$-perfect PEAs as follows.

\begin{definition}\label{de:npefect}
{\rm Let  $(E;+,0,1)$ be a  PEA.  We say that $E$ is an  {\it
$n$-perfect} PEA if
\begin{itemize}
\item[(i)] there exists an $n$-decomposition $(E_{0}, E_{1},\ldots, E_{n})$
of $E$.

\item[(ii)] $E_{i}+ E_{j}$ exists   if $i+j<n$.

\item[(iii)] $E_{0}$ is the unique maximal ideal of $E$.
\end{itemize}

}
\end{definition}

We recall that according to Corollary \ref{co:ideal}, $E_0$ is a unique maximal ideal of $E$ and it is normal.

\begin{example}\label{ex:nperf}
{\rm Let $\mathbb{Z}$ be the group of integers and $G$ be a
po-group. Let $\mathbb{Z}\overrightarrow{\times}G$ be
the lexicographic product of $\mathbb{Z}$ and $G$, and let
$u=(n,0)$. If we set
$E=\Gamma(\mathbb{Z}\overrightarrow{\times}G,(n,0)),$ then $E$ is a
PEA. If we set $E_{0}=\{(0,g)\mid g\in G^{+}\}$, for $i\in
\{1,\ldots,n-1\}$, $E_{i}=\{(i,g)\mid g\in G\}$, and
$E_{n}=\{(0,-g)\mid g\in G^{+}\}$, then $E$ is an $n$-perfect PEA.}
\end{example}

We recall the following two definitions used in \cite{Dv08}. Let $E$ be
a PEA. We denote by $\mathcal{M}(E)$ and $\mathcal{N}(E)$ the set of
maximal ideals and the set of normal ideals of $E,$ respectively. We define (i) the {\it radical} of a PEA $E$, $\Rad(E)$, as the set
$$\Rad(E)=\bigcap\{I\mid I\in \mathcal{M}(E)\},$$
and (ii) the {\it normal radical} of $E$, via
$$\Rad_{n}(E)=\bigcap\{I\mid I\in \mathcal{M}(E)\cap\mathcal{N}(E)\}.
$$
It is obvious that $\Rad(E)\subseteq \Rad_{n}(E)$ holds in any PEA
$E.$

\begin{lemma}\label{le:radeq}
{\rm Let  $(E;+,0,1)$ be an $n$-perfect PEA.  Then
$E_{0}=\mbox{\rm Infinit}(E)=\Rad(E)=\Rad_{n}(E).$ }
\end{lemma}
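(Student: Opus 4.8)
The plan is to prove the chain of equalities $E_{0}=\mathrm{Infinit}(E)=\Rad(E)=\Rad_{n}(E)$ by establishing each identity in turn, leaning heavily on the structure already extracted in Theorem \ref{th:infinit} and the definition of an $n$-perfect PEA. First, since $E$ is $n$-perfect, it carries an $n$-decomposition $(E_{0},\ldots,E_{n})$ with $E_{i}+E_{j}$ existing whenever $i+j<n$, so by Theorem \ref{th:infinit} the slices are comparable, $E_{0}\leqslant E_{1}\leqslant\cdots\leqslant E_{n}$, and part (i) of that theorem gives directly $E_{0}=\mathrm{Infinit}(E)$. This disposes of the first equality at once; it is the hypotheses of $n$-perfectness that make Theorem \ref{th:infinit} applicable, so no extra work is needed there.

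Next I would handle $E_{0}=\Rad(E)$. By Definition \ref{de:npefect}(iii), $E_{0}$ is the \emph{unique} maximal ideal of $E$, so $\mathcal{M}(E)=\{E_{0}\}$ and hence $\Rad(E)=\bigcap\{I\mid I\in\mathcal{M}(E)\}=E_{0}$ immediately from the definition of the radical. For the normal radical, recall from Corollary \ref{co:ideal} that $E_{0}$ is a normal ideal; being also the unique maximal ideal, it is the unique member of $\mathcal{M}(E)\cap\mathcal{N}(E)$, and therefore $\Rad_{n}(E)=\bigcap\{I\mid I\in\mathcal{M}(E)\cap\mathcal{N}(E)\}=E_{0}$ as well. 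Combining, $\Rad(E)=\Rad_{n}(E)=E_{0}$.

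The argument is short because the definition of $n$-perfect PEA has front-loaded the essential content: condition (iii) pins the maximal-ideal structure down completely, and Theorem \ref{th:infinit}(i) pins down the infinitesimal part. The only point requiring a moment's care—and thus the closest thing to an obstacle—is verifying that $E_{0}$ genuinely sits inside $\mathcal{M}(E)\cap\mathcal{N}(E)$ so that this intersection is nonempty and equals $\{E_{0}\}$; this is exactly where Corollary \ref{co:ideal} (normality of $E_{0}$) together with Definition \ref{de:npefect}(iii) (maximality and uniqueness) combine, and once both are invoked the chain of equalities closes with no residual computation.
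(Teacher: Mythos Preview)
Your proof is correct and follows essentially the same route as the paper: invoke Theorem \ref{th:infinit}(i) for $E_{0}=\mathrm{Infinit}(E)$, use Definition \ref{de:npefect}(iii) to conclude $\mathcal{M}(E)=\{E_{0}\}$ and hence $\Rad(E)=E_{0}$, and then use normality of $E_{0}$ (Corollary \ref{co:ideal}) to obtain $\Rad_{n}(E)=E_{0}$. The paper's argument is more terse but structurally identical.
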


\begin{proof} By Theorem \ref{th:infinit}, $E_{0}=\mbox{\rm Infinit}(E)$. By
(iii) of Definition \ref{de:npefect}, we have that $\Rad(E)=E_{0}.$
Furthermore, $E_{0}$ is also a normal ideal and so
$\Rad(E)=\Rad_{n}(E).$
\end{proof}

\begin{definition}\label{de:rieszid}

{\rm An ideal $I$ in a GPEA $E$ is called an $R_{1}$-$ideal,$ if the
following condition holds:

(R1) if $i\in I,a,b\in E$ and  $a+ b$ exists, $i\leqslant
a+ b,$ then there exist $j,k\in I$ such that $ j \leqslant a,$
$k\leqslant b$ and $i\leqslant j+ k$.

An  $R_{1}$-ideal $I$ is called a $Riesz$ $ideal,$ if the following two
conditions  hold:

(R2) if $i\in I,a,b\in E$,  $i\leqslant a$  and $(a\backslash
i)+ b$ exists, then there exists  $j\in I$ such that
$j\leqslant b $ and $a+ (j/b)$ exists; if $i\in I,a,b\in
E,i\leqslant a$ and $b+(i/a)$ exists, then there exists  $j\in
I$ such that $j\leqslant b $ and $( b\backslash j)+ a$ exists.}

\end{definition}

Let $A$ be a subset of a partially ordered set $E.$ We say that
$A$ is {\it downwards} ({\it upwards}) directed if for any $x,y\in A,$ there exists $z\in A$ such that $z\leqslant x,y$ $(x,y\leqslant z)$. If $E$ is a po-group or a PEA, then $E$ is upwards directed iff it is downwards directed; then we say simply that $E$ is {\it directed}.

\begin{proposition}\label{pr:rideal}

{\rm \cite{XieLi10a} } In an upwards directed GPEA $E$, an ideal $I$
is a Riesz ideal if and only  if  $I$ is $R_{1}$-ideal.

\end{proposition}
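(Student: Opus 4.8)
The only substantive implication is the converse one. Since a Riesz ideal is by definition an $R_1$-ideal that in addition satisfies (R2), the plan is to assume that $I$ is an $R_1$-ideal in the upwards directed GPEA $E$ and to deduce (R2). The two clauses of (R2) are interchanged by passing to the opposite GPEA $(E;+^{\mathrm{op}},0)$, which satisfies (GP1)--(GP5), has the same order and the same ideals, for which (R1) again holds, and in which the roles of $/$ and $\backslash$ are swapped; hence it suffices to establish the first clause, the second following by this mirror argument.

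So let $i\in I$, $i\leqslant a$, put $a_1:=a\backslash i$ (thus $a_1+i=a$), and assume $a_1+b$ exists. Here I would use directedness: choose $m$ with $a\leqslant m$ and $a_1+b\leqslant m$. Since $a_1\leqslant m$, write $m=a_1+m'$ with $m':=a_1/m$. Cancelling $a_1$ on the left in $a_1+i\leqslant a_1+m'$ and in $a_1+b\leqslant a_1+m'$ gives $i\leqslant m'$ and $b\leqslant m'$; and setting $a':=a/m$, so that $a+a'=m$, the same cancellation in $a_1+i+a'=m=a_1+m'$ gives $m'=i+a'$. This reduces the first clause to the single task: produce $j\in I$ with $j\leqslant b$ and $(j/b)\leqslant a'$. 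Indeed, since $a+a'=m$ exists and the existence of a sum is inherited by a smaller second summand (if $a+y$ exists and $z\leqslant y$ then $a+z$ exists), $(j/b)\leqslant a'$ forces $a+(j/b)$ to exist, which is exactly what the clause asks.

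This is where the ideal hypothesis enters. Writing $m'=b+c_0$ with $c_0:=b/m'$, the relation $i\leqslant m'=b+c_0$ feeds into (R1) and produces $j,k\in I$ with $j\leqslant b$, $k\leqslant c_0$ and $i\leqslant j+k$; this $j$ is the candidate. To conclude I would show $(j/b)\leqslant a'$ by comparing the two presentations $m'=i+a'$ and $m'=j+(j/b)+c_0$ of the same element, feeding in $i\leqslant j+k\leqslant j+c_0$ and stripping the shared summands by the cancellation laws (GP3).

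The hard part is exactly this last step. In the commutative case it is painless: $i\leqslant j+k$ together with $k\leqslant c_0$ and $c_0=m'-b$ give $j\geqslant b-a'$, so $(j/b)\leqslant a'$, and $j$ is merely a concrete stand-in for the meet $i\wedge b$, which need not exist in $E$. In the genuinely noncommutative setting the obstruction is that in $m'=j+(j/b)+c_0$ the summand $(j/b)$ is trapped between $j$ and $c_0$, so it cannot be cancelled against the information $i\leqslant j+c_0$ as it stands; I expect to have to first transport $(j/b)$ past an adjacent summand using (GP2) (which rewrites $x+y$ as $y+x'$ for a suitable conjugate $x'$) and only then apply (GP3). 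Controlling these conjugations is the technical heart of the argument, and it is precisely here that both the $R_1$-decomposition and the upward directedness are indispensable.
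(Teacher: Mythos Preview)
The paper does not actually prove this proposition: it is quoted verbatim from \cite{XieLi10a} and no argument is given here. So there is no ``paper's own proof'' to compare your proposal against; whatever comparison is possible would have to be with the original source.

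On the substance of your attempt: the reductions are correct and well chosen. The passage to the opposite GPEA does exchange the two clauses of (R2) (same order, same ideals, (R1) preserved), and the use of upward directedness to produce a common majorant $m$, followed by left cancellation of $a_1$, cleanly reduces the first clause to finding $j\in I$ with $j\leqslant b$ and $j/b\leqslant a'$, where $m'=i+a'=b+c_0$. Invoking (R1) on $i\leqslant b+c_0$ to get $j,k\in I$ with $j\leqslant b$, $k\leqslant c_0$, $i\leqslant j+k$ is the natural move.

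The gap is exactly where you flag it, and it is a real one rather than a formality. Your chain ``$i\leqslant j+k\leqslant j+c_0$'' already presumes that $j+c_0$ exists, but from $m'=j+(j/b)+c_0$ you only get that $j+\bigl((j/b)+c_0\bigr)$ exists; in a GPEA $c_0\leqslant (j/b)+c_0$ need not hold, so $j+c_0$ is not guaranteed. More seriously, the hoped-for endgame---rewrite $m'=j+(j/b)+c_0$, conjugate via (GP2) to push $(j/b)$ outward, and then cancel against $m'=i+a'$ using (GP3)---does not obviously terminate in $j/b\leqslant a'$: each (GP2)-move replaces a summand by a conjugate, and you have given no mechanism for tracking how $j/b$ relates to its conjugates, nor why the residual pieces assemble to witness $j/b\leqslant a'$. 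As it stands this is a plan with the decisive computation missing, not a proof; to complete it you would need either a sharper use of (R1) (perhaps applied to a different decomposition of $m'$) or a lemma controlling the (GP2)-conjugates, and you should consult \cite{XieLi10a} for how this is actually carried out.
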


\begin{proposition}\label{pr:radeq}
Let  $(E;+,0,1)$ be an $n$-perfect PEA for some integer $n\ge 1.$  Then $E_{0}$ is a
Riesz ideal.
\end{proposition}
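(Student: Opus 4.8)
The plan is to reduce the statement to the weaker condition (R1) from Definition \ref{de:rieszid} and then invoke Proposition \ref{pr:rideal}. Since $E$ is a PEA it has a greatest element $1$, so $a\leqslant 1$ and $b\leqslant 1$ for all $a,b\in E$; hence $E$ is upwards directed. By Corollary \ref{co:ideal} the set $E_{0}$ is an ideal, so by Proposition \ref{pr:rideal} it suffices to prove that $E_{0}$ is an $R_{1}$-ideal.

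The key preliminary observation is an ordering fact about the slices. Because $E$ is $n$-perfect, condition (ii) of Definition \ref{de:npefect} gives that $E_{i}+E_{j}$ exists whenever $i+j<n$, so Theorem \ref{th:infinit} yields $E_{0}\leqslant E_{1}\leqslant\cdots\leqslant E_{n}$. Since each $E_{i}$ is nonempty, transitivity of $\leqslant$ between these sets upgrades this chain to $E_{0}\leqslant E_{p}$ for every $p\in\{1,\ldots,n\}$; that is, every element of $E_{0}$ lies below every element of each slice $E_{p}$ with $p\geqslant 1$. In particular, if $i\in E_{0}$ and $a\in E_{p}$ with $p\geqslant 1$, then $i\leqslant a$.

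With this in hand, verifying (R1) is a short case analysis. Suppose $i\in E_{0}$, $a+b$ exists in $E$, and $i\leqslant a+b$; write $a\in E_{p}$ and $b\in E_{q}$. If $p\geqslant 1$, then $i\leqslant a$ by the observation above, and we take $j:=i$ and $k:=0$, both in $E_{0}$: then $j\leqslant a$, $k\leqslant b$, and $i=j+k$. Symmetrically, if $p=0$ but $q\geqslant 1$, then $i\leqslant b$, and we take $j:=0$, $k:=i$. Finally, if $p=q=0$, then $a,b\in E_{0}$, so we may simply take $j:=a$ and $k:=b$, which lie in $E_{0}$ and satisfy $j\leqslant a$, $k\leqslant b$, and $i\leqslant a+b=j+k$. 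This exhausts all cases and establishes (R1).

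The argument is elementary once the slice ordering is available, so the only real obstacle is extracting $E_{0}\leqslant E_{p}$ for all $p\geqslant 1$ from Theorem \ref{th:infinit}, which literally states only the chain $E_{0}\leqslant\cdots\leqslant E_{n}$; this is exactly the point at which the nonemptiness of every slice is used. Having proven (R1), Proposition \ref{pr:rideal} immediately gives that $E_{0}$ is a Riesz ideal, which completes the proof.
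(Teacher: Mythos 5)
Your proof is correct and takes essentially the same route as the paper: both reduce the claim to condition (R1) via Proposition \ref{pr:rideal} (using that a PEA is upwards directed) and then run a case analysis on which of $a,b$ lie in $E_{0}$, with the key input being the slice ordering $E_{0}\leqslant E_{p}$ for $p\geqslant 1$ extracted from Theorem \ref{th:infinit}. Your witness choices ($j=i,\ k=0$, etc.) are marginally simpler than the paper's ($j=a,\ k=i$ in the mixed case and $j=k=i$ when neither summand is in $E_{0}$), and you make explicit the transitivity-through-nonempty-slices step that the paper leaves implicit, but the argument is the same.
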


\begin{proof} Since the PEA $E$ is upwards directed, by Proposition \ref{pr:rideal}, it suffices to show that $E_{0}$ satisfies the
condition (R1). Assume that $i\in E_{0},$ $a, b\in E,$ $a+ b$
exists,  and $i\leqslant a+ b.$ There are the following three
cases: (1) Both $a$ and $b$ belong to $E_{0};$ then the condition is trivial. (2)
Only one of $a, b$ belongs to $ E_{0}$, without loss of generality,
we assume that $a\in E_{0}$ and $b\notin E_{0}.$ By Theorem
\ref{th:infinit} (ii), $i\leqslant b$, thus $i\leqslant a+
i\leqslant a+ b.$ (3) Neither $a$ nor $ b$ belongs to $E_{0}$,
then by Theorem \ref{th:infinit} (ii), $i\leqslant a, b$, and so
$i\leqslant i+ i\leqslant a+ b$, and $i\in E_{0}.$
\end{proof}

\begin{definition}\label{def:t-norm1}
{\rm
 For an ideal $I$ in a GPEA $E$, we define $a\sim_{I}b$ if there
exist $i, j\in I,$ $i\leqslant a,$ $j\leqslant b$ such that
$a\setminus i=b\setminus j.$ }
\end{definition}

\begin{theorem}\label{th:linear}
{\rm \cite{XieLi10a} Let  $I$ be  a normal Riesz ideal  in a GPEA
$E$. Then $E/\sim_{I}$ is a linear GPEA if and only if $I$ satisfies
the following condition:

 (L) For any $a,b\in E$, there exists a $c\in E$ such that  $a+ c  \sim_{I} b$ or $b+ c  \sim_{I} a.$ }
\end{theorem}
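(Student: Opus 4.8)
The plan is to translate linearity of the quotient into an order-theoretic trichotomy and then to read off (L) through a single realization lemma. Since $I$ is a normal Riesz ideal, I take for granted (from \cite{XieLi10a}) that $\sim_{I}$ is a congruence on $E$, that $E/\sim_{I}$ is a GPEA whose partial sum is $[a]+[c]=[a+c]$ whenever a representative sum exists, that $i\in I$ iff $i\sim_{I}0,$ and that the induced order is $[a]\leqslant[b]$ iff $[a]+[c]=[b]$ for some class $[c].$ Recalling that a GPEA is linear exactly when its induced order is total, the theorem reduces to showing that the statement ``$[a]\leqslant[b]$ or $[b]\leqslant[a]$ for all $a,b$'' is equivalent to (L).

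The crux is the following realization lemma: for $a,b\in E$ one has $[a]\leqslant[b]$ in $E/\sim_{I}$ if and only if there is $c\in E$ with $a+c$ existing in $E$ and $a+c\sim_{I}b.$ Granting it, the theorem is immediate: $E/\sim_{I}$ is linear iff for all $a,b$ we have $[a]\leqslant[b]$ or $[b]\leqslant[a],$ which by the lemma says precisely that for all $a,b$ there is a $c$ with $a+c\sim_{I}b$ or $b+c\sim_{I}a,$ i.e. (L). The backward implication of the lemma is trivial, since $a+c\sim_{I}b$ gives $[a]+[c]=[a+c]=[b],$ whence $[a]\leqslant[b].$ Thus the only genuine work sits in the forward implication.

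The hard part will be this forward implication, and it is exactly where the Riesz conditions enter. If $[a]\leqslant[b],$ then there are a representative $a'\sim_{I}a$ and an element $c'$ with $a'+c'$ existing and $a'+c'\sim_{I}b.$ Using $a\sim_{I}a'$ I write $a=w+i$ and $a'=w+j$ with $i,j\in I$ and $w=a\setminus i=a'\setminus j,$ so that $a'+c'=w+(j+c')$ witnesses that $(a\setminus i)+(j+c')$ exists. This is precisely the hypothesis of clause (R2) applied to the triple $(i,a,\,j+c')$: it yields some $k\in I$ with $k\leqslant j+c'$ for which $a+\bigl(k/(j+c')\bigr)$ exists, and I set $c:=k/(j+c').$ By construction $k+c=j+c',$ so in the quotient $[c]=[k]+[c]=[k+c]=[j+c']=[j]+[c']=[c']$ (using $j,k\in I$), that is, $c\sim_{I}c'.$ Now $a\sim_{I}a',$ $c\sim_{I}c',$ and both $a+c$ and $a'+c'$ exist, so the congruence property gives $a+c\sim_{I}a'+c'\sim_{I}b,$ which proves the lemma; the symmetric alternative $[b]\leqslant[a]$ is handled identically through the second clause of (R2). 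The delicate points to get right are the bookkeeping of which $I$-elements are absorbed and the verification that the element $c$ produced by (R2) indeed lands in the class $[c'],$ both of which rely on normality of $I$ so that left and right $I$-translations behave coherently.
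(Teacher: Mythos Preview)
The paper does not prove this theorem at all: it is quoted verbatim from \cite{XieLi10a} and used as a black box, so there is no in-paper proof to compare against. Your argument must therefore be judged on its own, and it is essentially sound. The reduction to the realization lemma is the right move, and your use of the first clause of (R2) to transport the witness $c'$ from the representative $a'$ back to $a$ is exactly how the Riesz property is meant to be deployed; the chain $[c]=[k+c]=[j+c']=[c']$ is valid once one knows $\sim_I$ is a congruence with $I=[0]$.

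Two small remarks. First, your closing comment that the symmetric alternative $[b]\leqslant[a]$ needs ``the second clause of (R2)'' is not quite right: the lemma you proved is symmetric in $a$ and $b$, so swapping their roles and reapplying the \emph{first} clause of (R2) already handles that case; the second clause would be needed if you had instead set up the $\sim_I$-witness via right differences $i/a$ rather than left differences $a\backslash i$. Second, you are leaning on several facts from \cite{XieLi10a}---that $\sim_I$ is a genuine congruence for normal Riesz ideals, that $I=[0]$, and that the quotient is again a GPEA with the expected induced order---and since the theorem itself lives in that paper, make sure you are citing its preparatory lemmas rather than the theorem you are proving.
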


\begin{lemma}\label{le:direct}
{\rm \cite{XieLi10a}} If $\sim$ is  a Riesz congruence  in a  PEA
$E,$ then for any $a\in E$ the equivalence class $[a]$ is both
upwards directed and downwards directed.
\end{lemma}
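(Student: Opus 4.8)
The plan is to read the downward direction straight off the definition of $\sim_I$ and then to deduce the upward direction by dualizing through the order-reversing complements $^{-}$ and $^{\sim}$. To set up, I would write $\sim=\sim_I$ for its kernel $I=[0]$, a normal Riesz ideal; being an ideal, $0\in I$, and being a Riesz congruence, $\sim$ is compatible with $+$, so the canonical map $q\colon E\to E/\!\sim$ is a PEA-morphism. The key preliminary observation is that $\sim$ respects both complements: from $q(a^{-})+q(a)=q(a^{-}+a)=q(1)=1$ together with the uniqueness of complements in (PE2) one gets $q(a^{-})=q(a)^{-}$, and symmetrically $q(a^{\sim})=q(a)^{\sim}$. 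Hence $b\sim c$ implies $b^{-}\sim c^{-}$ and $b^{\sim}\sim c^{\sim}$.

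For downward directedness, take $b,c\in[a]$, so that $b\sim_I c$. By Definition \ref{def:t-norm1} there are $i,j\in I$ with $i\leqslant b$, $j\leqslant c$ and $x:=b\backslash i=c\backslash j$. Then $x+i=b$ and $x+j=c$, whence $x\leqslant b$ and $x\leqslant c$. Moreover $x\sim_I b$, as witnessed by $0\in I$ and $i\in I$ via $x\backslash 0=x=b\backslash i$; thus $x\in[b]=[a]$ is a common lower bound, and $[a]$ is downwards directed.

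For upward directedness I would dualize. Given $b,c\in[a]$ we have $b\sim c$, hence $b^{-}\sim c^{-}$ by the preliminary observation, so $b^{-},c^{-}\in[b^{-}]$. Applying the downward case already proved to the class $[b^{-}]$ yields some $y\in[b^{-}]$ with $y\leqslant b^{-}$ and $y\leqslant c^{-}$. Set $d:=y^{\sim}$. The order-reversal rule $a\leqslant b\Rightarrow b^{\sim}\leqslant a^{\sim}$, together with $a^{-\sim}=a$, gives $b=(b^{-})^{\sim}\leqslant y^{\sim}=d$ and likewise $c\leqslant d$, while $y\sim b^{-}$ forces $d=y^{\sim}\sim(b^{-})^{\sim}=b$, so $d\in[a]$; thus $d$ is a common upper bound and $[a]$ is upwards directed.

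The step I expect to be the crux is the preliminary one, namely that $\sim$ preserves the complements (equivalently, that $E/\!\sim$ is a PEA with $q$ a morphism). This is precisely where one must use that $\sim$ is a genuine Riesz congruence rather than the relation attached to an arbitrary ideal, since for a general ideal $\sim_I$ need not even be transitive. Granting it, both halves follow almost immediately: the downward half from the definition, and the upward half by the $^{-}/^{\sim}$ symmetry.
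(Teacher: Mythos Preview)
The paper does not supply its own proof of this lemma; it is merely quoted from \cite{XieLi10a}. So there is no in-paper argument to compare against, and the relevant question is simply whether your proposal stands on its own.

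It does. The downward half is immediate from Definition~\ref{def:t-norm1}: with $b\sim_I c$ witnessed by $i,j\in I$ and $x:=b\backslash i=c\backslash j$, one has $x+i=b$, $x+j=c$, hence $x\leqslant b,c$, and $x\backslash 0=x=b\backslash i$ shows $x\sim_I b$, so $x\in[a]$. For the upward half your dualization via $^{-}$ and $^{\sim}$ is clean: once the quotient $E/\!\sim$ is a PEA with $q$ a morphism, (PE2) forces $q(a^{-})=q(a)^{-}$ and $q(a^{\sim})=q(a)^{\sim}$, so $b\sim c\Rightarrow b^{-}\sim c^{-}$ and $y\sim b^{-}\Rightarrow y^{\sim}\sim b$, and the order-reversal of the complements turns the common lower bound $y$ of $b^{-},c^{-}$ into the common upper bound $d=y^{\sim}\in[a]$ of $b,c$.

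Your identification of the crux is accurate: the step that genuinely uses ``Riesz'' is that $\sim_I$ is a congruence with PEA quotient (equivalently, that $I$ is a normal Riesz ideal), which is exactly the content developed in \cite{XieLi10a}. Given that, both halves are short. One could also argue the upward direction directly from an equivalent right-hand formulation of $\sim_I$ (using $/$ instead of $\backslash$), but your complement-based symmetry argument is just as efficient and avoids re-deriving that equivalence.
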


\begin{lemma}\label{le:dir}
{\rm \cite{Dv03}} If $I$ is  an ideal of PEA $E$ satisfying
{\rm (RDP)}$_{0}$ and  $a$ is an element of $E,$ then the ideal $I_{0}(I,a)$
generated by $I$ and  $a$ is given by $I_{0}(I,a)=\{x\in E\mid
x=x_{1}+ a_{1}+\cdots + x_{n}+ a_{n},\ x_{i}\in
I, a_{i}\leqslant a, 1\leqslant i\leqslant n,\ n\geqslant1\}.$ If $I$
is a normal ideal, then $I_{0}(I,a)=\{x\in E\mid x=x_{1}+
a_{1}+\cdots+
 a_{n},\ y\in I, a_{i}\leqslant a, 1\leqslant
i\leqslant n,\ n\geqslant1\}.$
\end{lemma}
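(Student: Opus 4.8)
The plan is to denote by $J$ the set displayed on the right-hand side of the first formula and to prove that $J$ is exactly the smallest ideal of $E$ containing $I\cup\{a\}$, which is by definition $I_{0}(I,a)$. One inclusion is immediate: since $I_{0}(I,a)$ is an ideal containing $I$ and $a$, it contains every element $\leqslant a$ by downward closure and hence, by closure under the partial addition, it contains every finite sum $x_{1}+a_{1}+\cdots+x_{n}+a_{n}$ with $x_{i}\in I$ and $a_{i}\leqslant a$; thus $J\subseteq I_{0}(I,a)$. For the reverse inclusion I would verify that $J$ is itself an ideal containing $I$ and $a$, whence $I_{0}(I,a)\subseteq J$ follows by minimality. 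That $I\subseteq J$ and $a\in J$ is seen by taking trivial summands ($n=1$ with $a_{1}=0$, respectively $x_{1}=0$ and $a_{1}=a$), and closure of $J$ under existing sums is immediate by concatenating two representations, using associativity (PE1) to regroup.

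The crux of the first assertion is the downward closure of $J$, and this is where (RDP)$_{0}$ enters, which I expect to be the main obstacle. First I would upgrade (RDP)$_{0}$ to its finite, order-respecting form by induction: if $y\leqslant b_{1}+\cdots+b_{k}$, then $y=d_{1}+\cdots+d_{k}$ with $d_{\ell}\leqslant b_{\ell}$ for every $\ell$, splitting $b_{1}$ off from $b_{2}+\cdots+b_{k}$ at each step. Applying this to $y\leqslant x=x_{1}+a_{1}+\cdots+x_{n}+a_{n}$, I obtain $y=y_{1}+c_{1}+\cdots+y_{n}+c_{n}$ with $y_{i}\leqslant x_{i}$ and $c_{i}\leqslant a_{i}$. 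Since $I$ is an ideal, $y_{i}\leqslant x_{i}\in I$ forces $y_{i}\in I$, while $c_{i}\leqslant a_{i}\leqslant a$ gives $c_{i}\leqslant a$; hence $y\in J$. This shows $J$ is a downward-closed ideal and establishes the first formula.

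For the normal case I would start from a representation $x=x_{1}+a_{1}+\cdots+x_{n}+a_{n}\in J$ and bubble all the $I$-summands to the left. The tool is (PE3) together with normality of $I$: whenever a block $a_{k}+x_{k+1}$ occurs, (PE3) yields an element $x_{k+1}'$ with $a_{k}+x_{k+1}=x_{k+1}'+a_{k}$, and since these two sums exist and are equal with $x_{k+1}\in I$, normality of $I$ forces $x_{k+1}'\in I$. Each such swap moves an ideal element past an element $\leqslant a$, which is left unchanged and hence still $\leqslant a$, and adjacent ideal elements are then merged via closure of $I$ under the partial sum (their sum existing as a subterm by (PE1)). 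Iterating these swaps and merges finitely many times rewrites $x$ as $i+a_{1}+\cdots+a_{n}$ with $i\in I$ and each $a_{j}\leqslant a$, which is the claimed normal form. I expect the remaining technical care to lie in keeping every intermediate partial sum defined while regrouping, handled throughout by (PE1), and in the bookkeeping of the (double) bubbling induction, since $x_{k+1}$ must be carried past $k-1$ many $a$-summands; the conceptual content is entirely the iterated (RDP)$_{0}$ for downward closure and the (PE3)-plus-normality commutation for the normal form.
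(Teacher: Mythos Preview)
The paper does not supply its own proof of this lemma: it is quoted verbatim from \cite{Dv03} and carries no argument in the present text. So there is nothing to compare your proposal against here.

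That said, your argument is sound and is exactly the standard one. The inclusion $J\subseteq I_{0}(I,a)$ is trivial, and for the reverse inclusion the only nontrivial point is downward closure of $J$, which you handle correctly by the obvious inductive extension of (RDP)$_{0}$ to finitely many summands; the resulting pieces land in $I$ (by heredity of $I$) and below $a$ (by transitivity). For the normal case, your commutation step is precisely the definition of a normal ideal used in this paper: from $a_{k}+x_{k+1}=x_{k+1}'+a_{k}$ with $x_{k+1}\in I$ one gets $x_{k+1}'\in I$, while $a_{k}$ is unchanged and so remains $\leqslant a$; (PE1) guarantees that every intermediate sub-sum you need along the way is defined. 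The only cosmetic slip is that $x_{k+1}$ must be bubbled past $k$ (not $k-1$) of the $a$-summands, but you already flag this bookkeeping as the residual care required.
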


\begin{proposition}\label{pr:1perf}
Let $(E;+,0,1)$ be an $n$-perfect PEA. Then  $E_{0}$ and $E_{n}$ are both  upwards  and downwards directed.
\end{proposition}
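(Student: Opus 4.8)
The plan is to prove the four directedness claims separately: two of them are immediate because $0$ and $1$ act as universal bounds, while the remaining two are mutually dual through the complements $^-$ and $^\sim$, so only one of them needs a genuine argument. First I would record the trivial directions. Since $0\in E_0$ is the least element of $E$, it is a common lower bound for any two elements of $E_0$, so $E_0$ is downwards directed; dually $1\in E_n$ (because $s(1)=1$) is the greatest element of $E$, hence a common upper bound for any two elements of $E_n$, so $E_n$ is upwards directed.

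Next I would show that $E_0$ is upwards directed, which is where $n$-perfectness enters. Because $E$ is $n$-perfect with $n\geqslant 1$, Theorem \ref{th:infinit}(ii) gives $E_0+E_0=E_0$; in particular, for any $x,y\in E_0$ the sum $x+y$ exists, lies in $E_0$, and satisfies $x\leqslant x+y$ and $y\leqslant x+y$ (the second inequality by (GP2)). Thus $x+y$ is a common upper bound lying inside $E_0$, so $E_0$ is upwards directed. As an alternative one may observe that $E_0$ is a normal Riesz ideal by Proposition \ref{pr:radeq} and Lemma \ref{le:radeq}, that $E_0$ is precisely the class $[0]$ of the associated congruence, and then invoke Lemma \ref{le:direct}.

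Finally I would deduce that $E_n$ is downwards directed by transporting the previous step across the complements. Given $x,y\in E_n$, condition (c) of the $n$-decomposition yields $x^\sim,y^\sim\in E_n^\sim=E_0$, so by the step just proved the element $w:=x^\sim+y^\sim$ exists, belongs to $E_0$, and dominates both $x^\sim$ and $y^\sim$. Setting $z:=w^-$, condition (c) again gives $z\in E_0^-=E_n$. Applying the order-reversing complement $^-$ to $x^\sim\leqslant w$ and using the identity $(x^\sim)^-=x$, I obtain $z=w^-\leqslant x$, and symmetrically $z\leqslant y$; hence $z\in E_n$ is a common lower bound and $E_n$ is downwards directed.

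The argument carries no real difficulty; the only step that demands care is the bookkeeping of the two complements in the last paragraph, where one must send $E_n$ into $E_0$ via $^\sim$ and return via $^-$, so that the involution law $a^{\sim-}=a$ and the order-reversal of $^-$ combine correctly in the noncommutative setting.
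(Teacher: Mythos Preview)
Your proof is correct and follows essentially the same route as the paper's. The paper argues even more tersely: $0\in E_0$ gives downwards directedness, the existence of $x+y\in E_0$ for $x,y\in E_0$ gives upwards directedness, and then the single line ``By $E_n=E_0^{\sim}$, we have that $E_n$ is both upwards directed and downwards directed'' transports both properties at once via the order-reversing complement; your version simply unpacks this last step (and adds the harmless observation that $1\in E_n$ already settles upwards directedness of $E_n$).
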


\begin{proof} By $0\in E_{0},$ $E_{0}$ is  downwards directed. For any $x,y\in
E_{0},$ $x+ y$ exists in $E$ and $x+ y\in E_{0},$ thus $E_{0}$ is  upwards directed. By $E_{n}=E_{0}^{\sim}$, we have that $E_{n}$ is both upwards directed and downwards directed.
\end{proof}

\begin{proposition}\label{pr:uddirect}
Let  $(E;+,0,1)$ be an $n$-perfect PEA satisfying  {\rm (RDP)}$_{0}$. Then $E$ satisfies the following condition

{\rm(e)} for any $i\in \{0,1,\ldots,n\},$ $E_{i}$ is both upwards
directed and downwards directed.
\end{proposition}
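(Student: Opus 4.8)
The plan is to prove that every slice $E_i$ in an $n$-perfect PEA with (RDP)$_0$ is directed. Since $E_0$ and $E_n$ are already handled by Proposition \ref{pr:1perf}, and since the orthosupplement map $x\mapsto x^-$ sends $E_i$ onto $E_{n-i}$ by condition (c) of the $n$-decomposition while reversing the order, I first observe that $E_i$ is upwards (downwards) directed if and only if $E_{n-i}$ is downwards (upwards) directed. Hence it suffices to prove that each $E_i$ with $1\leqslant i\leqslant n-1$ is, say, upwards directed; downwards directedness will then follow by applying the orthosupplement argument to $E_{n-i}$, which is again an interior slice.

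The main idea is to reduce directedness of $E_i$ to directedness of $E_0=\mbox{\rm Infinit}(E)=\Rad(E)$, which we already know. First I would fix $x,y\in E_i$ with $1\leqslant i\leqslant n-1$. The goal is to produce an upper bound in $E_i$. The key structural tool is Theorem \ref{th:infinit}, which in the $n$-perfect (hence $E_0\leqslant E_1\leqslant\cdots\leqslant E_n$) setting gives $E_a+E_b=E_{a+b}$ whenever $a+b<n$. I would pick any fixed element $w\in E_{i}$ (for instance, using that $E_0\leqslant E_i$ one can write elements of $E_i$ in terms of elements of $E_0$ via the sums $E_0+E_i=E_i$). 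Then I would use (RDP)$_0$ together with Lemma \ref{le:dir} to analyze how $x$ and $y$ sit above a common element of the lower slice $E_0$: writing $x=p+a$ and $y=q+b$ with $a,b\in E_0$ and $p,q$ drawn from a fixed ``base'' of $E_i$, I would compare the $E_0$-components $a,b$, find a common upper bound $c\in E_0$ for them (using directedness of $E_0$), and then lift this to a common upper bound for $x,y$ inside $E_i$ via the cancellation and association axioms (GP3)/(PE1) and the additivity $E_0+E_i=E_i$.

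The hard part will be the ``lifting'' step: controlling noncommutativity when moving an infinitesimal $E_0$-component past the $E_i$-part, and ensuring that the constructed upper bound genuinely lands back in $E_i$ rather than overshooting into $E_{i+1}$ or higher. This is exactly where (RDP)$_0$ is essential, since it lets me decompose a given element lying below a sum into pieces bounded by the summands, and where normality of $E_0$ (from Lemma \ref{le:radeq}) is needed so that left and right cosets coincide and the passage past the $E_0$-part is well behaved. I expect the bookkeeping to go smoothly once the reduction $E_i=E_0+E_i$ is in place, because the slice index is additive under $+$ by condition (d), so any bound I build from $E_0$-data stays in $E_i$ by degree reasons. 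Finally, having shown each interior $E_i$ is upwards directed, I invoke the orthosupplement symmetry once more to get downwards directedness, completing condition (e) for all $i\in\{0,1,\ldots,n\}$.
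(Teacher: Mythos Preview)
Your outline has a genuine gap at the ``lifting'' step, and it is not just bookkeeping. You propose to write $x=p+a$ and $y=q+b$ with $a,b\in E_0$ and $p,q$ ``drawn from a fixed base of $E_i$'', then bound $a,b$ by some $c\in E_0$ and lift. But you never say what this fixed base is, and in fact obtaining a \emph{common} $p=q$ beneath both $x$ and $y$ is precisely the downwards directedness of $E_i$ that you are trying to establish. With $p\neq q$ there is no mechanism that turns an upper bound of the infinitesimal tails $a,b$ into an upper bound of $x,y$: the slice-additivity $E_0+E_i=E_i$ only tells you where sums land, not that two arbitrary elements of $E_i$ share a common $E_i$-part. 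Neither (RDP)$_0$ nor normality of $E_0$ supplies this, and Lemma~\ref{le:dir} is not invoked in a way that produces such a common piece.

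The paper's argument closes exactly this gap, and it does so by a different route that uses the one hypothesis you never exploit: that $E_0$ is the \emph{unique maximal} ideal. For $x,y\in E_1$, maximality forces the ideal $I_0(E_0,x)$ to equal $E$, so by Lemma~\ref{le:dir} (this is where (RDP)$_0$ enters) one writes $y=a+z_1+\cdots+z_m$ with $a\in E_0$ and each $z_k\leqslant x$. Counting slice indices forces $m=1$ and $z_1\in E_1$, giving $z_1\leqslant x,y$; hence $E_1$ is downwards directed. Then $E_i=iE_1$ (Theorem~\ref{th:infinit}) transfers this to all interior slices, and the orthosupplement symmetry you noted yields upwards directedness. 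So the missing idea is to use maximality of $E_0$ together with Lemma~\ref{le:dir} to compare $y$ against $x$ directly, rather than trying to compare both against an unspecified third object.
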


\begin{proof}
By Proposition \ref{pr:1perf}, if $n=1,$ then the result holds. Now,
assume that $n>1.$  By $E_{i}=E_{i}^{-}=E_{i}^{\sim}$, it suffices
to prove that $E_{i}$ is downwards directed, for any $i\in
\{1,\ldots,n-1\}$.

Assume that $x,y\in E_{1}$. Then the ideal $I(E_{0}, x)$, which is
generated by the normal ideal $E_{0}$ and $x$, is equal to $E$,
since $E_{0}$ is a maximal ideal. Thus, there exists $a\in E_{0}, $
and $z_{1},\ldots,z_{m}\in E_{1}$ with $z_{1},\ldots,z_{m}\leqslant
x$ such that $y=a+ z_{1}+\cdots + z_{m}$ by Lemma
\ref{le:dir}. By $y\in E_{1},$ we have that $m=1$. Thus,
$z_{1}\leqslant x, y$ and $z_{1}\in  E_{1}$ and
$E_{1}$ is downwards directed.

By Theorem \ref{th:infinit}, $E_{i}=iE_{1}$ for any $i\in
\{1,\ldots,n-1\}.$ Now, assume that $x,y\in  E_{i},$ then there
exist  $x_{1},\ldots,x_{i}\in E_{1}, $  and $y_{1},\ldots, y_{i}\in
E_{1},$ such that $x=x_{1}+\cdots + x_{i}$ and
$y=y_{1}+\cdots + y_{i}.$ Since $E_{1}$ is downwards directed,
there exists a $z\in E_{1}$ such that $z\leqslant x_{1},\ldots,x_{i}
$ and $z\leqslant y_{1},\ldots, y_{i}.$ Thus, $iz\in E_{i}$ and
$iz\leqslant x,y. $ Hence, $E_{i}$ is downwards directed for any
$i\in\{1,\ldots,n-1\}.$
\end{proof}

For $a,b \in E$ with $a\leqslant b,$ we define an interval $[a,b]:=\{x\in E \mid a\leqslant x\leqslant b\}.$

\begin{proposition}\label{pr:minimum}
Let $(E;+,0,1)$ be an $n$-perfect PEA satisfying the condition
{\rm(e)}. If any decreasing chain in $E_{1}$ has a lower bound
in $E_{1}$, then
\begin{itemize}
\item[{\rm (i)}] There exists a smallest element $c\in E_{1}$.
\vspace{-2mm}
\item[{\rm (ii)}] For any $i\in \{0,1,\ldots,n\},$ $ic$ is a smallest
element in $E_{i}$.

\vspace{-2mm}\item[{\rm (iii)}] For any $i\in \{0,1,\ldots,n\},$ $(ic)^{\sim}=(ic)^{-}$
and it is the largest element  in $E_{n-i}.$
\vspace{-2mm}
\item[{\rm (iv)}] For any $i\in \{0,1,\ldots,n\},$
 $E_{i}=[ic,((n-i)c)^{\sim}].$

\vspace{-2mm}
\item[{\rm (v)}] $E=\{0,c,\ldots,nc\}.$

\end{itemize}
\end{proposition}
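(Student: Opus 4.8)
The plan is to establish the five claims in sequence, since each one feeds into the next. The crucial hypothesis is that every decreasing chain in $E_1$ has a lower bound in $E_1$; combined with the fact that $E_1$ is downwards directed (part of condition (e)), this is precisely the setup for a Zorn-type argument to produce a minimum.

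For (i), I would apply Zorn's Lemma to the set $E_1$ ordered by the reverse of $\leqslant$. By condition (e), $E_1$ is downwards directed, and by hypothesis every decreasing chain has a lower bound in $E_1$, so every chain in the reversed order has an upper bound. Zorn then yields a minimal element $c$ of $E_1$ in the reverse order, i.e.\ a $\leqslant$-minimal element. To upgrade minimality to a genuine smallest element, I would use downward directedness once more: for any $x\in E_1$ there is $z\in E_1$ with $z\leqslant c$ and $z\leqslant x$, and minimality of $c$ forces $z=c$, hence $c\leqslant x$. So $c$ is the smallest element of $E_1$.

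For (ii), I would invoke Theorem \ref{th:infinit}, which gives $E_i = iE_1$ and $E_i + E_j = E_{i+j}$ whenever $i+j<n$ under the ordering $E_0\leqslant E_1 \leqslant\cdots\leqslant E_n$ (this ordering holds because condition (e) together with the $n$-perfect structure guarantees the sums exist, matching the equivalent condition in Theorem \ref{th:infinit}). Since every element of $E_i$ has the form $x_1+\cdots+x_i$ with each $x_k\in E_1$, and $c\leqslant x_k$ for all $k$, monotonicity of $+$ gives $ic = c+\cdots+c \leqslant x_1+\cdots+x_i$; one must check $ic$ actually lies in $E_i$, which follows from $ic\in iE_1 = E_i$. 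Thus $ic$ is a smallest element of $E_i$. For (iii), applying the order-reversing unary operations from property (i) of the supplements, $x\mapsto x^{-}$ sends the smallest element $ic$ of $E_i$ to the largest element of $E_i^{-}=E_{n-i}$, and since $E$ is symmetric (being an $n$-perfect PEA with an $(n+1)$-valued state forces $E_i^{-}=E_i^{\sim}$ by condition (c) of the decomposition), $(ic)^{-}=(ic)^{\sim}$ is the largest element of $E_{n-i}$.

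Part (iv) then assembles (ii) and (iii): the smallest element of $E_i$ is $ic$ and the largest is $((n-i)c)^{\sim}$, and since $E_i$ is an interval-like slice, I would argue $E_i=[ic,\,((n-i)c)^{\sim}]$ by showing every element between these bounds lies in $E_i$ (using that $s$ is constant $=i/n$ on the interval, as the bounds share that value and $s$ is monotone). Finally, for (v), the striking conclusion $E=\{0,c,\ldots,nc\}$ should follow by showing each slice $E_i$ collapses to the single point $ic$; the key is that the smallest and largest elements of $E_i$ coincide, i.e.\ $ic = ((n-i)c)^{\sim}$, which I expect to be \textbf{the main obstacle}. I anticipate deriving this from an orthogonality/counting argument: $c + ((n-1)c) = nc = 1$ would force $((n-1)c)^{\sim}=c$, and more generally $ic + ((n-i)c)=1$, so that the largest element of $E_i$ equals its smallest element. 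Establishing the identity $ic + (n-i)c = 1$ in the PEA — hence that the two bounds of each slice coincide — is the delicate point, and it likely requires using that $c$ is the smallest element together with the additivity $E_i+E_{n-i}$ landing in $E_n=\{1\}$'s structure.
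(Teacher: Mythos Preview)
Your treatment of (i)--(iv) matches the paper's proof almost verbatim: Zorn plus downward directedness for (i), the decomposition $E_i=iE_1$ together with monotonicity of $+$ for (ii), and the order-reversing complements for (iii)--(iv). One small wobble: in (iii) you invoke ``$E$ is symmetric'' to get $(ic)^-=(ic)^\sim$, but condition (c) of the $n$-decomposition only says $E_i^-=E_i^\sim$ as \emph{sets}, not elementwise. The paper's (simpler) argument is that both $(ic)^-$ and $(ic)^\sim$ are largest in $E_{n-i}$, and a largest element is unique.

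The genuine gap is in (v). You correctly isolate the obstacle as proving $nc=1$, but your proposed route is circular: you write that $E_i+E_{n-i}$ lands in ``$E_n=\{1\}$'s structure'', yet $E_n=\{1\}$ is exactly what is being proved. Knowing only that $nc\in E_n$ and that $1$ is the largest element of $E_n$ yields nothing beyond the trivial $nc\leqslant 1$. The paper's key trick, which you are missing, works at the \emph{other} end of the decomposition: by (iii), $(nc)^-$ is the largest element of $E_0$; since $E_0+E_0$ exists and lies in $E_0$ (take $i=j=0$ in the $n$-perfect definition), the element $(nc)^-+(nc)^-$ belongs to $E_0$ and is therefore $\leqslant (nc)^-$. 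Cancellation then forces $(nc)^-=0$, hence $nc=1$. From this, $ic+(n-i)c=nc=1$ gives $((n-i)c)^\sim=ic$, so each interval $[ic,((n-i)c)^\sim]$ collapses to a point and $E=\{0,c,\ldots,nc\}$.
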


\begin{proof}  By Zorn's Lemma, there exists a minimal element $c$
in $E_{1}$. Since $E_{1}$ is downwards directed, and so the minimal
element $c$ is also the smallest element in $E_{1}$.

Now, by  Theorem \ref{th:infinit} (iii), for any $i\in
\{1,\ldots,n\},$ $E_{i}=iE_{1}.$ For $i\in \{1,\ldots,n\},$ $x\in
E_{i},$ there exist $x_{1},\ldots, x_{i}\in E_{1}$ such that
$x=x_{1}+\cdots+ x_{i},$ which implies that $ic\leqslant
x,$ since $c$ is  the smallest element in $E_{1}$. Hence, for any
$i\in \{0,1,\ldots,n\},$ $ic$ is the smallest element in $E_{i}$.
Thus, $(ic)^{\sim}$ and $(ic)^{-}$ are the largest elements in $E_{n-i},$
and so  $(ic)^{\sim}=(ic)^{-}.$ Hence,
$E_{i}=[ic,((n-i)c)^{\sim}],$ for any $i\in \{0,1,\ldots,n\}.$

Since $(nc)^{-}$ is the largest element of $E_{0}$,
$(nc)^{-}+ (nc)^{-}$ exists and $(nc)^{-}+ (nc)^{-}\in
E_{0},$ then we have that $(nc)^{-}=0,$ which implies that $nc=1.$
Thus, we have that $ic=((n-i)c))^{\sim}$, for $i\in
\{0,1,\ldots,n\}$. By $E_{i}=[ic,((n-i)c)^{\sim}],$ we have that
$E_{i}=\{ic\},$ $i\in \{0,1,\ldots,n\}$.
\end{proof}

Recall that for any state $s:E\rightarrow [0,1]$ on a PEA $E$, we can
define a binary operation $\sim_{s}$ as follows: $x\sim_{s}y$ if
and only if $s(x)=s(y)$, for $x,y\in E$.

\begin{proposition}\label{pr:equcladd}
Let  $(E;+,0,1)$ be an $n$-perfect PEA satisfying the
condition {\rm(e)} and let $s$ be a state $s:E\rightarrow[0,1]$ such that
$s(E_{i})=\frac{i}{n}$ for any $i\in\{0,1,\ldots,n\}$. Then:

\begin{itemize}
\item[{\rm (i)}] For $x,y\in E$, $x\sim_{s}y$  if only if there exists a
unique $i\in \{0,1,\ldots,n\}$  such that $x,y\in E_{i}.$

\vspace{-2mm}
\item[{\rm(ii)}] For $x,y\in E$, $x\sim_{s}y$  if only if $x\sim_{E_{0}}y.$
\end{itemize}

\end{proposition}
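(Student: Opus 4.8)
The plan is to establish (i) first, since its main by-product --- the identification $E_i = s^{-1}(\{i/n\})$ --- is exactly what drives (ii). For (i): by hypothesis $s(E_i)=\frac{i}{n}$, while the family $(E_0,\ldots,E_n)$ partitions $E$ (conditions (a),(b) of Theorem \ref{th:nstate}) and the scalars $0,\frac1n,\ldots,1$ are pairwise distinct. Hence $E_i = s^{-1}(\{\frac{i}{n}\})$ for every $i$. Now $x\sim_{s}y$ means $s(x)=s(y)$; this common value is some $\frac{k}{n}$ with a unique index $k$, so $x,y\in s^{-1}(\{\frac{k}{n}\})=E_k$. The converse is immediate, since $x,y\in E_k$ forces $s(x)=s(y)=\frac{k}{n}$. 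In particular $E_0=s^{-1}(\{0\})=\Ker(s)$, which I will use repeatedly below.

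For the forward implication of (ii), suppose $x\sim_{E_0}y$. By Definition \ref{def:t-norm1} there are $p,q\in E_0$ with $p\leqslant x$, $q\leqslant y$ and $w:=x\setminus p = y\setminus q$, so that $w+p=x$ and $w+q=y$. Applying the morphism $s$ and using $s(p)=s(q)=0$ (because $E_0=\Ker(s)$) gives $s(x)=s(w)=s(y)$, i.e.\ $x\sim_{s}y$. This direction is a one-line computation and uses no directedness.

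The substantive direction is the converse of (ii), and it is here that condition {\rm(e)} enters. Assume $x\sim_{s}y$; by (i) there is a common index $k$ with $x,y\in E_k$. Since {\rm(e)} guarantees that $E_k$ is downwards directed, I can pick $w\in E_k$ with $w\leqslant x$ and $w\leqslant y$. Put $p:=w/x$ and $q:=w/y$, i.e.\ the unique elements with $w+p=x$ and $w+q=y$; note $p\leqslant x$ and $q\leqslant y$ since these sums equal $x$ and $y$. Applying $s$ and cancelling $s(w)=\frac{k}{n}$ yields $s(p)=s(q)=0$, so $p,q\in E_0$ by the identity $E_0=\Ker(s)$. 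Finally $x\setminus p = w = y\setminus q$, so the witnesses $p,q$ exhibit $x\sim_{E_0}y$. The only real obstacle is producing the common lower bound $w$ \emph{inside the single slice} $E_k$; once $w\in E_k$ is available the complements $p,q$ automatically land in $E_0$. Thus the hypothesis {\rm(e)} (downward directedness of each $E_i$) is used precisely at this point and nowhere else essentially.
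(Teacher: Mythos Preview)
Your proof is correct and follows essentially the same approach as the paper. The only cosmetic difference is that the paper treats the extreme indices $i=0$ and $i=n$ separately (using $0$ as a common lower bound in $E_0$, and passing to complements for $E_n$) and invokes downward directedness from condition {\rm(e)} only for the intermediate slices $1\leqslant i\leqslant n-1$; you instead handle all indices $k$ uniformly via the downward directedness of $E_k$ guaranteed by {\rm(e)}, which is slightly cleaner.
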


\begin{proof}
(i) It is obvious.

(ii) For $x,y\in E$,  if $x\sim_{E_{0}}y$, then there exist $a,b\in
E_{0}$ such that $x\setminus a=y\setminus b$, thus $s(x\setminus
a)=s(y\setminus b).$ By $s(E_{0})=0$, we have that  $s(x)=s(y),$ and
so $x\sim_{s}y$.

Conversely, if $x\sim_{s}y$, then there is a unique $i\in
\{0,1,\ldots,n\}$  such that $x,y\in E_{i}.$ Now, if $i=0,$ then
$x\sim_{E_{0}}y.$ If $i=n,$ then $1\setminus x,1\setminus y\in
E_{0},$ and so $1\setminus x \sim_{E_{0}}1\setminus y,$ hence,
$x\sim_{E_{0}}y.$ If $n=1$, we have finished the proof. Assume that
$n>1$ and $i\in\{1,\ldots,n-1\}.$ Since $E_{i}$ is downwards
directed, there exists $z\in E_{i}$ such that $z\leqslant x,y.$
Thus, $z=x\backslash(z/x)=y\backslash(z/y)$ and $z/x, z/y\in E_{0},$
which implies that $x\sim_{E_{0}}y$.
\end{proof}

\begin{proposition}\label{pr:caequ}
Let  $(E;+,0,1)$ be an $n$-perfect PEA and a state
$s:E\rightarrow[0,1]$ such that $s(E_{i})=\frac{i}{n}$ for any
$i\in\{0,1,\ldots,n\}$.    If for $x,y\in E$, $x\sim_{s}y$  if only
if $x\sim_{E_{0}}y,$ then $E$ satisfies the condition {\rm(e)}.
\end{proposition}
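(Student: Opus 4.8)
The plan is to prove condition (e) by establishing directly that each slice $E_{i}$ is downwards directed, and then to deduce upward directedness from the order-reversing symmetry supplied by the orthosupplement. Recall that by (c) of the $n$-decomposition the map $x\mapsto x^{-}$ is an order-reversing bijection of $E$ (with inverse $x\mapsto x^{\sim}$) carrying $E_{n-i}$ onto $E_{i}$. Consequently, a common lower bound of two elements of $E_{n-i}$ is sent by this bijection to a common upper bound of two elements of $E_{i}$; so once every $E_{i}$ is shown to be downwards directed, every $E_{i}$ is automatically upwards directed as well. It therefore suffices to prove downward directedness for each $i\in\{0,1,\ldots,n\}$.

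For the core step I would fix $i$ and take $x,y\in E_{i}$. Since $s(E_{i})=\frac{i}{n}$, we have $s(x)=s(y)=\frac{i}{n}$, whence $x\sim_{s}y$, and the hypothesis yields $x\sim_{E_{0}}y$. Unravelling Definition \ref{def:t-norm1} for the ideal $E_{0}$, there exist $p,q\in E_{0}$ with $p\leqslant x$, $q\leqslant y$ and a common remainder $z:=x\backslash p=y\backslash q$. By the defining property of $\backslash$ this means $z+p=x$ and $z+q=y$, so in particular $z\leqslant x$ and $z\leqslant y$; the element $z$ is the natural candidate for a common lower bound of $x$ and $y$.

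It remains to verify that $z$ lands in the same slice $E_{i}$. Since $s$ is additive on existing sums and $p\in E_{0}=\Ker(s)$, we obtain $s(x)=s(z)+s(p)=s(z)$, hence $s(z)=\frac{i}{n}$ and $z\in E_{i}$. Thus $z\in E_{i}$ is a common lower bound of $x$ and $y$, and $E_{i}$ is downwards directed. The argument is short and poses no genuine obstacle; its whole content is the observation that the $\sim_{E_{0}}$-remainder $z$ automatically has the same state value, and hence lies in the same slice, as $x$ and $y$. I note finally that only the forward implication $x\sim_{s}y\Rightarrow x\sim_{E_{0}}y$ of the hypothesis is actually used.
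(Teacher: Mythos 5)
Your proof is correct, but it takes a genuinely different route from the paper's. The paper's proof is essentially two lines: the hypothesis $\sim_{s}=\sim_{E_{0}}$ together with Proposition \ref{pr:equcladd}(i) identifies each slice $E_{i}$ as an equivalence class of $\sim_{E_{0}}$, which is a Riesz congruence because $E_{0}$ is a normal Riesz ideal (Corollary \ref{co:ideal} and Proposition \ref{pr:radeq}), and then Lemma \ref{le:direct} (imported from \cite{XieLi10a}) says that every class of a Riesz congruence is both upwards and downwards directed. You instead work directly from Definition \ref{def:t-norm1}: given $x,y\in E_{i}$, the hypothesis produces $p,q\in E_{0}$ with $z:=x\backslash p=y\backslash q$, and additivity of $s$ together with $E_{0}=\Ker(s)$ forces $s(z)=\frac{i}{n}$, so $z\in E_{i}$ is a common lower bound; upward directedness then follows from the order-reversing bijection $x\mapsto x^{-}$, which by condition (c) carries $E_{n-i}$ onto $E_{i}$. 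Both arguments are sound, and each step of yours checks out (in particular $z^{-}\in E_{n-i}^{-}=E_{i}$ is a common upper bound, and $z\in E_{i}$ follows since the slices partition $E$ with distinct $s$-values). What your approach buys is self-containedness and slightly greater generality: you never use that $E_{0}$ is a Riesz ideal, nor any congruence machinery, nor even maximality of $E_{0}$ from Definition \ref{de:npefect} --- only that $E_{0}\subseteq\Ker(s)$ and the decomposition properties --- and you correctly note that only the implication $x\sim_{s}y\Rightarrow x\sim_{E_{0}}y$ is needed. What the paper's approach buys is brevity and placement of the result inside the Riesz-ideal/congruence framework it develops, at the cost of leaning on an external lemma.
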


\begin{proof}
By the assumption, we have that  for any $i\in \{0,1,\ldots,n\}$,
$E_{i}$ is the equivalent class of $E$ with respect to the Riesz congruence
$\sim_{E_{0}}.$ Thus, by Lemma \ref{le:direct}, for any $i\in
\{0,1,\ldots,n\}$, $E_{i}$ is both upwards directed and downwards
directed.
\end{proof}

\begin{theorem}\label{th:nperfect}
 Let  $(E;+,0,1)$ be an $n$-perfect PEA satisfying the
condition {\rm(e)}. Then $E/\sim_{E_{0}}$ is isomorphic to the
effect algebra $\{0,\frac{1}{n},\ldots,\frac{n-1}{n},1\}.$
\end{theorem}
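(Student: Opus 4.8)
The plan is to transport the effect-algebra structure across the state $s$. First I would record that $E_{0}$ is a normal Riesz ideal: it is normal by Corollary~\ref{co:ideal} (equivalently Lemma~\ref{le:radeq}) and a Riesz ideal by Proposition~\ref{pr:radeq}. Consequently $\sim_{E_{0}}$ is a normal Riesz congruence, so $E/\sim_{E_{0}}$ carries a well-defined quotient PEA structure in which $[x]\oplus[y]$ (I write $\oplus$ for the induced operation) is defined precisely when some representatives $x'\in[x]$, $y'\in[y]$ admit a sum $x'+y'$ in $E$, and then $[x]\oplus[y]=[x'+y']$. Because the condition {\rm(e)} holds, Proposition~\ref{pr:equcladd} gives $x\sim_{E_{0}}y \Leftrightarrow s(x)=s(y) \Leftrightarrow x,y$ lie in a common $E_{i}$; hence the $\sim_{E_{0}}$-classes are exactly $E_{0},E_{1},\dots,E_{n}$, and so $|E/\sim_{E_{0}}|=n+1$.

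Next I would define $\bar s\colon E/\sim_{E_{0}}\to\{0,\tfrac1n,\dots,\tfrac{n-1}{n},1\}$ by $\bar s([x])=s(x)$. By the equivalence just noted $\bar s$ is well defined and injective, and it is surjective because $s$ is an $(n+1)$-valued discrete state; thus $\bar s$ is a bijection carrying the class $E_{i}$ to $\tfrac in$, with $\bar s([0])=0$ and $\bar s([1])=1$.

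The core of the argument is to check that $\bar s$ is an isomorphism of effect algebras, i.e.\ that $[x]\oplus[y]$ is defined iff $\bar s([x])+\bar s([y])\leqslant 1$ and then equals $\bar s([x])+\bar s([y])$. Write $x\in E_{i}$, $y\in E_{j}$. If $[x]\oplus[y]$ is defined, choose summable representatives $x',y'$; since $s$ is a morphism, $\bar s([x]\oplus[y])=s(x'+y')=s(x')+s(y')=\tfrac in+\tfrac jn\leqslant 1$, which in particular shows the quotient sum is independent of representatives. For the converse suppose $i+j\leqslant n$. When $i+j<n$ the defining condition~(ii) of an $n$-perfect PEA (Definition~\ref{de:npefect}) guarantees that $E_{i}+E_{j}$ exists, so every pair of representatives is summable and lands in $E_{i+j}$ by Theorem~\ref{th:infinit}(ii). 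When $i+j=n$ I would exhibit one summable pair directly: for $x\in E_{i}$ the complement $x^{\sim}$ lies in $E_{i}^{\sim}=E_{n-i}=E_{j}$ and $x+x^{\sim}=1\in E_{n}$, so $[x]\oplus[y]=[1]$. In either case $\bar s([x]\oplus[y])=\tfrac{i+j}{n}=\bar s([x])+\bar s([y])$. Finally, if $i+j>n$, Theorem~\ref{th:infinit}(iii) says that no pair of representatives is summable, so $[x]\oplus[y]$ is undefined, matching $\tfrac in+\tfrac jn>1$. Hence $\bar s$ and its inverse are morphisms, and $E/\sim_{E_{0}}\cong\{0,\tfrac1n,\dots,1\}$.

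I expect the delicate point to be the boundary case $i+j=n$: there $E_{i}+E_{j}$ need not exist for \emph{all} representatives, so Theorem~\ref{th:infinit} does not supply summability, and one must produce a single summable pair $x,x^{\sim}$ by hand while using the state $s$ to ensure the resulting quotient sum is independent of this choice. The cases $i+j<n$ and $i+j>n$ are then immediate from the $n$-perfect structure and Theorem~\ref{th:infinit}, respectively.
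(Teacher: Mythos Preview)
Your argument is correct, but it follows a different line from the paper's. The paper first verifies condition~(L) of Theorem~\ref{th:linear} (for $x\in E_i$, $y\in E_j$ with $i\leqslant j$, either $x\sim_{E_0}y$ or $x+z\sim_{E_0}y$ for some $z$), concluding that $E/\sim_{E_0}$ is a \emph{linear} PEA. It then uses the downward directedness of $E_1$ to construct an explicit element $c\in E_1$ with $ic\in E_i$ for all $i\in\{0,\ldots,n\}$, so that $E_i=[ic]$, and declares $\phi(E_i)=\tfrac{i}{n}$ to be the desired isomorphism. By contrast you never invoke Theorem~\ref{th:linear} and never build a cyclic element: you factor the state $s$ through the quotient and check both directions of the morphism property by the trichotomy $i+j<n$, $i+j=n$, $i+j>n$, handling the boundary case with the complement $x^{\sim}$. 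Your route is a bit more self-contained and makes the case $i+j=n$ (which the paper leaves implicit) explicit; the paper's route has the side benefit of producing a cyclic element $c$ with $nc\in E_n$, a construction that resurfaces in Section~6.
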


\begin{proof}
For any $x,y\in E,$ there exist unique $i,j\in\{0,1,\ldots,n\}$ such
that $x\in E_{i},$ $y\in E_{j}.$ Without loss of generality, we
assume that $i\leqslant j.$ Assume that $i=j.$ By Theorem
\ref{th:nstate}, there exists a state
$s:E\rightarrow\{0,\frac{1}{n},\ldots,\frac{n-1}{n},1\}$ such that
$s(E_{i})=\frac{i}{n}$, for $i\in \{0,1,\ldots,n\}.$ By Proposition
\ref{pr:equcladd}, $E/\sim_{E_{0}}=\{E_{0}, E_{1},\ldots, E_{n}\},$
then $x\sim_{E_{0}}y$. If $i< j,$ then $x< y,$ and so there
exists $z\in E$ such that $y=x+ z,$ and so $x+
z\sim_{E_{0}}y.$ By Theorem \ref{th:linear}, $E/\sim_{E_{0}}$ is a
linear PEA.

For any $a\in E_{1},$ we have $ma$ exists and $ma\in E_{m}$ for
$m\in \{1,\ldots,n-1\}$ by Definition \ref{de:npefect}. Now,
$(n-1)a+ ((n-1)a)^{\sim}=1,$ and $((n-1)a)^{\sim}\in E_{1}$.
However, since  $E_{1}$ is downwards directed, and so, there exists
an element $c\in E_{1}$ such that $c\leqslant a,((n-1)a)^{\sim}.$ Whence, for
$i\in \{0,1,\ldots,n\},$ $ic$ exists and $ic\in E_{i}.$ Thus,
$E_{i}= (ic)/\sim_{E_{0}},$ for $i\in \{0,1,\ldots,n\}.$ Hence, we
can define the mapping $\phi:E/\sim_{E_{0}}\rightarrow
\{0,\frac{1}{n},\ldots,1\}$  by $\phi(E_{i})=\frac{i}{n}$ for any
$i\in \{0,1,\ldots,n\},$ which is an isomorphism between effect
algebras.
\end{proof}

\section{Representation of strong $n$-perfect PEA}%6

In \cite{li08}, the author studied the structure of  non-Archimedean effect algebras and gave some conditions such that a non-Archimedean  effect algebra $E$ is isomorphic to the lexicographical product of one Archimedean effect algebra with a linearly ordered group. We recall that a PEA $E$ is {\it Archimedean} if $\mbox{Infinit}(E)=\{0\}.$

In this section, we introduce a stronger class of $n$-perfect PEAs, called strong $n$-perfect PEAs. We  will give  conditions such that any strong $n$-perfect PEA is isomorphic with the $n$-perfect PEA $\Gamma(\mathbb Z\lex G,(n,0)),$ where $G$ is a torsion-free po-group such that $\mathbb Z\lex G$ satisfies (RDP)$_1.$ In addition, we will study a categorical equivalence of the category of strong $n$-perfect PEAs with a special category of torsion-free directed po-groups.

Let $(E;+,0,1)$ be a PEA and $(G;+,\leqslant)$ be a
directed  po-group with a fixed element $h \in G.$ Let $E\overrightarrow{\times}_h  G$
be the set $\{(0,g)\mid g\in G^{+}\}\cup\{(a,g)\mid a\in E\setminus \{0,1\}, g\in G\}
\cup\{(1,g)\mid g\leqslant h, g\in G\}$, and define a
partial addition $+^{*}$ on $E\overrightarrow{\times}_h G$
componentwise just as following, for any $(a,x), (b,y), (c,z)\in
E\overrightarrow{\times}_h G,$ $(a,x)+^{*}(b,y)$ exists and
equals to $(c,z)$ if and only if $a+ b=c,x+y=z.$ It is routine
to prove that $(E\overrightarrow{\times}_h G; +^{*},(0,0),(1,h))$
is a pseudo-effect algebra. The set $(E\overrightarrow{\times}_h G;
+^{*},(0,0),(1,h))$ is called the
{\it lexicographical product} of the pseudo-effect algebra
$E$ with the po-group $G$ and with respect to the  element $h$ of $G$.
We recall that $E\overrightarrow{\times}_h G$ can be also expressed via the $\Gamma$ functor as $\Gamma(E\overrightarrow{\times} G, (1,h)):=\{(a,g)\mid (0,0)\leqslant (a,g)\leqslant (1,h)\}.$

It is routine to verify the following proposition.

\begin{proposition}\label{pr:lex}
Let $(E;+,0,1)$ be the  effect algebra
$\{0,\frac{1}{n},\ldots,1\}$ and $(G;+,\leqslant)$ be a directed
po-group. Then the lexicographical product
$(E\overrightarrow{\times}_h  G; +^{*},(0,0),(1,h))$  of the effect
algebra $E$ and the po-group $G$ with respect to the element $h$ is
the $n$-perfect PEA $\Gamma(\frac{1}{n}\mathbb Z \overrightarrow{\times} G, ), (1,h)).$
\end{proposition}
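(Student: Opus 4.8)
The plan is to produce an explicit isomorphism between the two structures and then read off an $n$-decomposition to confirm $n$-perfectness. First I would identify the chain effect algebra $E=\{0,\frac1n,\dots,1\}$ with the interval $\Gamma(\frac1n\mathbb Z,1)$, so that each $a\in E$ is simultaneously the element $\frac in$ of the po-group $\frac1n\mathbb Z$. Under this identification the obvious candidate is the flattening map $\phi\colon E\overrightarrow{\times}_h G\to\Gamma(\frac1n\mathbb Z\overrightarrow{\times}G,(1,h))$ defined by $(a,g)\mapsto(a,g)$, where on the right the pair is read inside $\frac1n\mathbb Z\overrightarrow{\times}G$. I would then establish that $\phi$ is a PEA isomorphism in two steps: that it is a bijection of the underlying sets, and that it carries the partial addition $+^{*}$ to the partial addition inherited by $\Gamma$, in both directions.

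Bijectivity I would obtain by computing the interval $\Gamma(\frac1n\mathbb Z\overrightarrow{\times}G,(1,h))$ explicitly. Unwinding the lexicographic order against the two endpoints, the inequality $(0,0)\leqslant(\frac in,g)\leqslant(1,h)$ forces exactly three blocks: $i=0$ with $g\in G^{+}$; $1\leqslant i\leqslant n-1$ with $g\in G$ arbitrary; and $i=n$ with $g\leqslant h$. This is precisely the three-block defining set $\{(0,g)\mid g\in G^{+}\}\cup\{(a,g)\mid a\in E\setminus\{0,1\},\ g\in G\}\cup\{(1,g)\mid g\leqslant h\}$ of $E\overrightarrow{\times}_h G$, so $\phi$ is a bijection fixing $(0,0)$ and $(1,h)$.

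For the operations, the key facts are that the group law of a lexicographic product is componentwise and that in $\Gamma$ a sum is defined exactly when the componentwise sum stays $\leqslant(1,h)$. Writing $a=\frac in,\ b=\frac jn$, I would run a short case split on $i+j$ versus $n$: when $i+j<n$ the first coordinate of the sum is $<1$, so the sum lies in the interval for all choices of second coordinates and $a+b$ exists in $E$; when $i+j=n$ the sum $(1,x+y)$ lies in $\Gamma$ iff $x+y\leqslant h$, which is exactly the condition for $(1,x+y)$ to belong to $E\overrightarrow{\times}_h G$; and when $i+j>n$ the first coordinate exceeds $1$, so neither $a+b$ exists in $E$ nor does the sum lie in $\Gamma$. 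In every case $+^{*}$ is defined iff the $\Gamma$-sum is, with equal values, so $\phi$ is an isomorphism of PEAs. The only step demanding care is the boundary $i+j=n$, where the second-coordinate constraint $x+y\leqslant h$ must be matched on both sides; everything else is bookkeeping.

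Finally I would confirm $n$-perfectness by exhibiting the decomposition $E_{0}=\{(0,g)\mid g\in G^{+}\}$, $E_{i}=\{(\frac in,g)\mid g\in G\}$ for $1\leqslant i\leqslant n-1$, and $E_{n}=\{(1,g)\mid g\leqslant h\}$. The order computation already yields conditions (a)--(d) of Theorem \ref{th:nstate} and clause (ii) of Definition \ref{de:npefect} (the sum $E_{i}+E_{j}$ exists whenever $i+j<n$), while $E_{0}=\mathrm{Infinit}(E)$ is a normal ideal by Theorem \ref{th:infinit}; that $E_{0}$ is the unique maximal ideal follows because the quotient is the simple chain $\{0,\frac1n,\dots,1\}$ of Theorem \ref{th:nperfect}, whose only ideals are trivial. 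Alternatively, the whole statement reduces to the computation of Example \ref{ex:nperf} (with unit $(n,h)$ in place of $(n,0)$) through the po-group isomorphism $\frac1n\mathbb Z\cong\mathbb Z$, $\frac kn\mapsto k$, which sends $(1,h)$ to $(n,h)$ and is what produces the coefficient $\frac1n$ in the statement.
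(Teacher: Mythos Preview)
The paper offers no proof beyond ``it is routine to verify,'' so you are supplying the omitted details, and your construction of the bijection $\phi$ together with the case split on $i+j$ versus $n$ is correct and is exactly the natural verification of the isomorphism.

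There is, however, a circularity in your justification that $E_0$ is the \emph{unique} maximal ideal. You invoke Theorem~\ref{th:nperfect} to identify the quotient $E/{\sim_{E_0}}$ with the chain $\{0,\tfrac1n,\dots,1\}$, but that theorem takes ``$E$ is an $n$-perfect PEA satisfying (e)'' as a hypothesis, which is precisely what you are in the middle of establishing. Even if you compute the quotient directly via the first-coordinate projection (which is easy here and avoids the circular citation), the step from ``the quotient has only trivial ideals'' to ``$E_0$ is the unique maximal ideal of $E$'' still needs the extra observation that every ideal of $E$ is comparable with $E_0$; this is specific to the lex structure, not a general PEA correspondence theorem. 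The clean fix is a direct argument: if a proper ideal $I$ contains some $(\tfrac{i}{n},g)$ with $i\geqslant1$, then downward closure gives $E_0\subseteq I$ and $(\tfrac1n,g')\in I$ for all $g'\leqslant g$; closing under sums with elements of $E_0$ and using that $G$ is directed then forces all of $E_1$ into $I$; an $n$-fold sum within $E_1$ reaches $(1,h)$, contradicting properness. Your fallback to Example~\ref{ex:nperf} is candid that the unit there is $(n,0)$ rather than $(n,h)$, and since that example is itself asserted without proof, citing it does not discharge the obligation either.
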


\begin{proposition}\label{pr:pogid}
Let  $(E;+,0,1)$ be an $n$-perfect PEA. Then there exists a
unique directed  po-group $G$ such that
$G^{+}=E_{0}$.
\end{proposition}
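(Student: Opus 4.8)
The plan is to realize $E_0$ as the positive cone of the group of two-sided fractions of the monoid $(E_0,+,0)$, and then to read off uniqueness from the universal property of that group. First I would pin down the algebraic nature of $E_0$. Since $E$ is $n$-perfect, condition (ii) of Definition~\ref{de:npefect} gives that $E_i+E_j$ exists whenever $i+j<n$; in particular $E_0+E_0$ exists, so by Theorem~\ref{th:infinit}(ii) the partial operation $+$ is in fact \emph{everywhere} defined on $E_0$ with $E_0+E_0=E_0$. Hence $(E_0,+,0)$ is a genuine total monoid. It is cancellative by (GP3), and it is conical in the sense that $a+b=0$ forces $a=b=0$ by (GP4); consequently $E_0\cap(-E_0)=\{0\}$ inside any enveloping group. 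Finally, by Proposition~\ref{pr:1perf}, $E_0$ is both upwards and downwards directed, with $0$ as least element.

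For existence I would next verify the two-sided Ore condition. Given $a,b\in E_0$, upward directedness provides $c\in E_0$ with $a,b\leqslant c$; unwinding the two equivalent descriptions of $\leqslant$ supplies $x_1,x_2,y_1,y_2\in E_0$ with $a+x_1=c=b+x_2$ and $y_1+a=c=y_2+b$. The first pair is a common right multiple and the second a common left multiple of $a$ and $b$, so both Ore conditions hold. By Ore's theorem the cancellative monoid $E_0$ embeds into its group of fractions $G$, in which every element is a difference of two elements of $E_0$; thus $G=E_0-E_0$. I would then set $G^{+}:=E_0$ and define $g\leqslant h$ iff $-g+h\in E_0$. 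Reflexivity, transitivity, and antisymmetry (the last from conicality) are immediate, and left translations automatically preserve this relation. One also checks painlessly that this order restricts on $E_0$ to the original GPEA order, since $a+c=b$ with $b\in E_0$ forces $c\in E_0$ because $E_0$ is a downward-closed ideal.

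The delicate point, and the step I expect to be the main obstacle, is two-sided translation invariance, which amounts to showing that $E_0$ is \emph{normal} in $G$. Here I would exploit (GP2): for $a,d\in E_0$ the existence of $d+a\in E_0$ yields $p,q\in E$ with $d+a=p+d=a+q$. Since $p,q\leqslant d+a\in E_0$ and $E_0$ is an ideal, $p,q\in E_0$; solving in the group gives $d+a-d=p\in E_0$ and $-a+d+a=q\in E_0$. Thus conjugation by each $d\in E_0$ and by each $-a$ (for $a\in E_0$) carries $E_0$ into $E_0$, and as these elements generate $G=E_0-E_0$, conjugation by every $g\in G$ preserves $E_0$; applying this to $g$ and to $-g$ gives $g+E_0-g=E_0$, so $E_0$ is normal. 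With normality in hand, for $g\leqslant h$ one has $-(g+d)+(h+d)=-d+(-g+h)+d\in E_0$, establishing right (hence two-sided) invariance. Therefore $(G;+,\leqslant)$ is a po-group with $G^{+}=E_0$, and it is directed because $G=G^{+}-G^{+}$.

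For uniqueness, suppose $H$ is any directed po-group with $H^{+}=E_0$ as monoids. Directedness forces $H=H^{+}-H^{+}=E_0-E_0$, so $H$ is a group generated by the Ore monoid $E_0$; by the universal property of the group of fractions the identity on $E_0$ extends to a (necessarily unique) group isomorphism $G\to H$. Since in both groups the order is determined by the common cone $E_0$, this isomorphism is an order isomorphism, which establishes that the directed po-group $G$ with $G^{+}=E_0$ is unique up to isomorphism of po-groups.
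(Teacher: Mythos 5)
Your proof is correct, and it reaches the same structural conclusion as the paper but by a more self-contained route. The paper disposes of the proposition in a few lines: it checks that $(E_0;+,0)$ is a total, cancellative, conical semigroup (totality from Definition~\ref{de:npefect}(ii) together with Theorem~\ref{th:infinit}, exactly as you argue) and then cites Birkhoff's conditions \cite[Thm II.4]{Fuc} to conclude that $E_0$ is the positive cone of a unique po-group, invoking \cite[Prop II.5]{Fuc} to take $G$ generated by $E_0$, hence directed. You instead unwind that citation: you verify the two-sided Ore condition from upward directedness of $E_0$ plus the two equivalent descriptions of $\leqslant$, build the group of fractions, and---this is the genuinely substantive extra step---prove normality of the cone, $g+E_0-g=E_0$ for all $g\in G$, from (PE3) together with the fact that $E_0$ is a downward-closed ideal, which yields the right-invariance of the order. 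This is worth noting: Birkhoff's characterization of positive cones requires precisely this normality condition $a+E_0=E_0+a$, which the paper's proof never checks explicitly (it checks only cancellativity and conicality before invoking Fuchs), so your argument actually fills in the one nontrivial hypothesis that the paper leaves to the reader; in a noncommutative setting cancellativity alone does not even guarantee embeddability into a group, so the Ore/normality verification is where the real content sits. Your uniqueness argument via the universal property of the group of fractions (surjectivity from $H=H^{+}-H^{+}$ for a directed $H$, injectivity from the fraction representation, order-compatibility because both orders are determined by the common cone) matches the ``unique up to isomorphism'' sense intended in the paper. One small stylistic caveat: like the paper, you should state explicitly that uniqueness is up to isomorphism of po-groups, since the group of fractions is only determined up to canonical isomorphism; you do say this in your final sentence, so the proof stands as written.
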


\begin{proof}
By Theorem \ref{th:infinit}, $E_{0}=\mbox{\rm Infinit}(E)$. Furthermore,
$E_{0}+ E_{0}$ exists and $E_{0}+ E_{0}=E_{0}.$ Hence, for
any $x,y\in E_{0}$, $x+y\in E_0$, and $(E_{0};+,0)$ is a semigroup.
For any $x,y\in E_{0},$ the equation $x+y=0,$ implies that $x=y=0.$
For any $x,y,z\in E_{0},$ the equation $x+y=x+z$ implies that $y=z$,
and equation $y+x=z+x$ implies that $y=z$. Then $(E_{0};+,0)$  is a
cancellative semigroup satisfying the conditions of Birkhoff, \cite[Thm II.4]{Fuc}, which guarantees that $E_{0}$ is the positive
cone of a unique (up to isomorphism) po-group $G$. Without loss of generality, we can assume that $G$ is generated by the positive cone $E_0,$ so that $G$ is directed, see \cite[Prop II.5]{Fuc}.
\end{proof}

\begin{proposition}\label{pr:eq}
Let $(E;+,0,1)$ be an $n$-perfect PEA  and $H$ be a partially
ordered group with strong unit $u$. Assume that $E=\Gamma(H,u)$.
Then the following statement holds.

$(\ast)$ For $x, y\in E\backslash E_{0}$, $a,b,c,d,e,f,g,h\in
E_{0}$, if $x\backslash a=y\backslash b$ and   $x\backslash
c=y\backslash d$, then $b/a=d/c$ and $a/b=c/d$ hold in $G.$ If  $e/x
=f/y$ and $g/x =h/y$,  then  $e\backslash f=g\backslash h$ and
$f\backslash e=h\backslash g$ hold in $H$.
\end{proposition}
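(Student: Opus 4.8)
The plan is to carry out every computation inside the po-group $H$, where the partial operations $\backslash$ and $/$ of $E=\Gamma(H,u)$ are simply one-sided group differences: for $w\leqslant z$ in $E$ one has $z\backslash w=z+(-w)$ and $w/z=(-w)+z$ in $H$. I would first record that, since $E$ is $n$-perfect, condition (ii) of Definition \ref{de:npefect} together with Theorem \ref{th:infinit} gives $E_{0}\leqslant E_{1}\leqslant\cdots\leqslant E_{n}$; hence every element of $E_{0}$ lies below every element of $E\setminus E_{0}$, so all the expressions $x\backslash a,\ y\backslash b,\ e/x,\ f/y,\dots$ occurring in $(\ast)$ are genuinely defined.

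For the first implication I would translate the two hypotheses into the group identities $x+(-a)=y+(-b)$ and $x+(-c)=y+(-d)$ in $H$. Left-adding $-y$ and then right-adding $a$ (resp. $c$) yields $(-y)+x=(-b)+a$ and $(-y)+x=(-d)+c$, whence $(-b)+a=(-d)+c$; symmetrically, left-adding $-x$ and right-adding $b$ (resp. $d$) gives $(-a)+b=(-c)+d$. Since in a group the extended operations read $b/a=(-b)+a$, $d/c=(-d)+c$, $a/b=(-a)+b$ and $c/d=(-c)+d$, these two identities are exactly the asserted $b/a=d/c$ and $a/b=c/d$, holding a priori in $H$.

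The one genuine point is to upgrade these equalities from $H$ to the group $G$ of Proposition \ref{pr:pogid}, and this is where I expect the only real obstacle. Here I would use that the inclusion $E_{0}\hookrightarrow H$ extends, by the universal property of the group of fractions of the directed cancellative monoid $E_{0}$ underlying the construction in Proposition \ref{pr:pogid}, to a group homomorphism $\psi\colon G\to H$ that is the identity on $E_{0}=G^{+}$; its image is the subgroup $\langle E_{0}\rangle$ of $H$ generated by $E_{0}$. I claim $\psi$ is injective, equivalently $\langle E_{0}\rangle\cap H^{+}=E_{0}$. Indeed, because $E_{0}$ is both upwards and downwards directed (Proposition \ref{pr:1perf}), every element of $\langle E_{0}\rangle$ can be written as $p+(-q)$ with $p,q\in E_{0}$; if such an element is $\geqslant 0$ in $H$ then $0\leqslant p+(-q)\leqslant p$ (using $-q\leqslant 0$ and that conjugation preserves the order in a po-group, so $p+(-q)+(-p)\leqslant 0$), and as $E_{0}$ is a downwards closed ideal (Corollary \ref{co:ideal}) we conclude $p+(-q)\in E_{0}$. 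By the uniqueness clause of Proposition \ref{pr:pogid}, $\psi$ is then an isomorphism of $G$ onto $\langle E_{0}\rangle$ fixing $E_{0}$, and transporting $(-b)+a=(-d)+c$ and $(-a)+b=(-c)+d$ back along $\psi$ gives $b/a=d/c$ and $a/b=c/d$ in $G$.

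For the second implication no passage to $G$ is required. Writing the hypotheses as $(-e)+x=(-f)+y$ and $(-g)+x=(-h)+y$ in $H$ and left-adding $e$ (resp. $g$) gives $x=e+(-f)+y=g+(-h)+y$; right-cancelling $y$ yields $e+(-f)=g+(-h)$, that is $e\backslash f=g\backslash h$ in $H$. Taking inverses of both sides and using $-(e+(-f))=f+(-e)$ gives $f+(-e)=h+(-g)$, i.e. $f\backslash e=h\backslash g$ in $H$, as claimed. Thus the whole statement reduces to routine group arithmetic and cancellation in $H$, the sole substantial step being the identification, in the third paragraph, of the positive cone of $\langle E_{0}\rangle$ with $E_{0}$, which secures the injectivity of $\psi$ and hence the validity of the first two equalities in $G$.
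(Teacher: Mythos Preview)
Your proof is correct and follows the same approach as the paper: translate the PEA operations $\backslash$ and $/$ into one-sided group subtractions in $H$, then cancel to obtain the desired identities. The paper's own argument is terser---it simply asserts at the outset that ``$G$ is a subgroup of $H$ with $G^{+}\subseteq H^{+}$'' and performs exactly the arithmetic of your second and fourth paragraphs (dismissing the second implication with ``the proof of the rest is similar''); your third paragraph supplies a justification for the embedding $G\hookrightarrow H$ that the paper leaves implicit, though note that the $p-q$ form for elements of $\langle E_{0}\rangle$ in the non-abelian case really rests on the normality of $E_{0}$ (conjugation-closure) rather than on directedness alone.
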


\begin{proof} We assume that $(E_{0},\ldots,E_{n})$ is an
$n$-decomposition of $E$ and $G$ is a unique
po-group determined by $E$ such that $G^{+}=E_{0}.$  Since $E$ is an
interval PEA, we assume there exists a positive element $u$ of a
po-group $(H;+,0)$ such that $\Gamma(H,u)=E$. Thus,
$G$ is a subgroup of $H$ with $G^{+}\subseteq H^{+}.$

If $x\backslash a=y\backslash b$ and $x\backslash c=y\backslash d$,
then we have that $x=y+(-b)+a=y+(-d)+c$, $y=x\backslash a+
b=x\backslash c+ d$ and so $-y+x=(-b)+a=(-d)+c,$ $-x+y=-a+b=-c+d,$
which implies $y/x=b/a=d/c,$ $x/y=a/b=c/d$. The proof of the rest  is similar.
\end{proof}

\begin{proposition}\label{le:symmetric}
Let $(E;+,0,1)$ be an $n$-perfect PEA with an $n$-decomposition
$(E_{0},\ldots,E_{n})$. If there exists an element $c\in E_{1}$ such that $nc=1$, then, for any $x\in E,$ $x+ c$ exists if and only if $c+ x$ exists.
\end{proposition}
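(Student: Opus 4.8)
The plan is to reduce the asserted equivalence to the single identity $c^{-}=c^{\sim}$ and then to invoke the standard existence criterion for the partial addition of a PEA. Recall that in any PEA the sum $a+b$ is defined if and only if $a\leqslant b^{-}$, equivalently if and only if $b\leqslant a^{\sim}$. Applying this to our two sums, $x+c$ is defined if and only if $x\leqslant c^{-}$, whereas $c+x$ is defined if and only if $x\leqslant c^{\sim}$. Consequently, once we know that the two orthosupplements of $c$ coincide, both defining conditions collapse to the single inequality $x\leqslant c^{-}=c^{\sim}$, and the equivalence holds for every $x\in E$. Thus the entire proof concentrates on establishing $c^{-}=c^{\sim}$.

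One of the two orthosupplements can be read off immediately from the hypothesis. Since $nc=1$ and, by definition, $nc=(n-1)c+c$, we have $(n-1)c+c=1$; because an equation $a+b=1$ forces $a=b^{-}$ and $b=a^{\sim}$, this gives $c^{-}=(n-1)c$. It remains to prove the reversed relation $c+(n-1)c=1$, for it yields $c^{\sim}=(n-1)c=c^{-}$ and completes the reduction.

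The crux, and the step I expect to be the main obstacle, is exactly this reversal: to move the single copy of $c$ from the right of $(n-1)c$ to its left, i.e.\ to show that the right-associated sum $c+(n-1)c$ exists and equals $1$. I would prove by induction on $j$ that $c+(jc)=(jc)+c=(j+1)c$ for all $0\leqslant j\leqslant n-1$. The base case $j=0$ is $c+0=c$ by (GP5). For the step, write $(j+1)c=(jc)+c$ and apply the associativity axiom (PE1) to the triple $(c,\,jc,\,c)$: the pair $c+(jc)=(j+1)c$ exists by the inductive hypothesis, and $(j+1)c+c=(j+2)c$ exists because $nc$ is defined and hence every multiple $ic$ with $i\leqslant n$ is defined. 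The axiom then forces $c+((j+1)c)$ to exist and to equal $(j+1)c+c=(j+2)c$. Taking $j=n-1$ gives $c+(n-1)c=nc=1$. The only care needed is to keep each intermediate multiple $ic$ within the range $i\leqslant n$, where its existence is guaranteed.

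Finally, combining $c^{-}=(n-1)c$ with the freshly obtained $c^{\sim}=(n-1)c$ yields $c^{-}=c^{\sim}$, and the existence criterion of the first paragraph finishes the argument: for every $x\in E$, the sum $x+c$ exists if and only if $x\leqslant c^{-}=c^{\sim}$ if and only if $c+x$ exists.
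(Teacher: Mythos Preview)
Your argument is correct and follows the same route as the paper: reduce the equivalence to $c^{-}=c^{\sim}$ via the existence criterion for $+$, and obtain that identity from $nc=1$. The paper's proof simply asserts $c^{-}=c^{\sim}$ in one line, whereas you supply the missing justification by the inductive commutation $c+(jc)=(jc)+c=(j+1)c$; this is a legitimate and careful way to fill the gap the paper leaves implicit.
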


\begin{proof}
Since $nc=1$,  we have that $c^{\sim}=c^{-}$. Then $x+ c$
exists if and only if $x\leqslant c^{-}$ if and only if
$x\leqslant c^{\sim}$ if and only if $c+ x$ exists.
\end{proof}

The following notions were defined for GMV-algebras in \cite{Dv08}, and cyclic elements were defined also in \cite{225}.

Let $n>0$ be an integer. An element $a$ of a PEA $E$ is said
to be {\it cyclic of order} $n>0$ if $na$ exists in $E$ and $na =
1.$ If $a$ is a cyclic element of order $n$, then $a^- = a^\sim$, indeed, $a^-
= (n-1) a = a^\sim$.

We say that a group $G$ is {\it torsion-free} if $ng\ne 0$ for any $g\ne 0$ and every nonzero integer $n.$  For example, every $\ell$-group is torsion-free, see \cite[Cor 2.1.3]{Gla}. We recall that if $G$ is torsion-free, so is $\mathbb Z \overrightarrow{\times} G.$

We recall that a group $G$ enjoys {\it unique  extraction of
roots} if, for all positive integers $n$ and $g,h \in G$, $g^n =
f^n$ implies $g=h$.  We recall that every linearly ordered group, or
a representable $\ell$-group, in particular every Abelian
$\ell$-group enjoys unique  extraction of roots, see \cite[Lem.
2.1.4]{Gla}.

We say that a PEA $E$ enjoys
{\it unique  extraction of roots of $1$} if $a,b \in E$ and $n a, n
b$ exist in $E$, and $n a=1= n b$, then $a= b.$  Then every $\Gamma(\mathbb Z \lex G,(n,0)) $ enjoys unique  extraction of roots of $1$ for any $n\ge 1$
and any torsion-free directed po-group $G$.  Indeed, let $k(i,g) = (n,0)=k(j,h)$. Then $ki=n=kj$
which yields $i=j >0$, and $kg=0=kh$ implies $g=0=h$.

\begin{definition}\label{de:strongper}
{\rm  Let $E$ be an $n$-perfect PEA satisfying (RDP)$_1$. We say that $E$ is a {\it
strong} $n$-perfect PEA  if

\begin{itemize}
\item[(i)] there exists a torsion-free unital po-group $(H,u)$ such that
$E=\Gamma(H,u),$

\vspace{-2mm}
\item[(ii)] there exists an element $c\in E_{1}$ such that (a) $nc=u,$ and (b) $c\in C(H).$

%\vspace{-2mm}
%\item[(iii)] $E$ satisfies the condition {\rm(e)}.
\end{itemize}
}
\end{definition}

The element $c$ from (ii) is said to be a {\it strong cyclic element of order
$n$.}

\begin{lemma}\label{le:unique}
Any strong cyclic element $c$ of order $n$   is a unique element $d\in E=\Gamma(H,u)$ such that $nd=u$  implies $c=d$ whenever $H$ is torsion-free.
\end{lemma}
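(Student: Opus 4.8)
The plan is to prove uniqueness by exploiting that $H$ is torsion-free together with the fact that a strong cyclic element is a commutator. The statement asserts that if $c$ is a strong cyclic element of order $n$ and $d \in E = \Gamma(H,u)$ satisfies $nd = u$, then $c = d$. Since $c$ is strong cyclic, we have $nc = u$ as well, so both $c$ and $d$ are elements of $H$ whose $n$-fold sum equals $u$. The natural idea is to consider the difference and show it must be $0$.

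First I would record that $nc = u = nd$ in the group $H$. Because $c$ is a strong cyclic element, by Definition \ref{de:strongper} we have $c \in C(H)$, i.e.\ $c$ commutes with every element of $H$. This commutativity is the crucial hypothesis: it lets me manipulate the sum $nd$ freely relative to $c$. The plan is to show $nc = nd$ forces $n(d-c) = 0$ in $H$, and then invoke torsion-freeness to conclude $d - c = 0$, hence $c = d$.

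The main step, and the place where commutativity of $c$ is needed, is rewriting $nd - nc$ as $n(d-c)$. In a general (noncommutative) group $nd$ and $nc$ are sums $d + d + \cdots + d$ and $c + c + \cdots + c$, and $nd = nc$ does not immediately give $n(d-c)=0$ because the elements need not commute. However, since $c \in C(H)$ is central, I can move all copies of $-c$ past the copies of $d$: starting from $nd = nc$, I would subtract, using that $c$ commutes with $d$ (and with $d - c$), to telescope $\underbrace{(d-c)+(d-c)+\cdots+(d-c)}_{n} = nd - nc = 0$. Concretely, writing everything additively, $n(d-c) = nd - nc = u - u = 0$, where the regrouping is legitimate precisely because $c$ is central so that the $-c$ terms can be redistributed among the $d$ terms. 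The hard part of the write-up will be justifying this telescoping carefully in the noncommutative setting, making explicit at each commutation step that $c$'s centrality is what licenses moving it.

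Finally, once $n(d-c) = 0$ is established in $H$, torsion-freeness of $H$ (which by hypothesis of Definition \ref{de:strongper} is a torsion-free po-group) immediately yields $d - c = 0$, i.e.\ $d = c$, completing the proof. I expect the centrality argument in the telescoping to be the only genuine obstacle; the appeal to torsion-freeness is routine once $n(d-c)=0$ is in hand.
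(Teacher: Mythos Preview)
Your proposal is correct and follows essentially the same approach as the paper: use centrality of $c$ to commute it past $d$ so that $n(c-d)=nc-nd=0$, then invoke torsion-freeness of $H$ to conclude $c=d$. The paper's proof is a one-line version of exactly this argument.
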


\begin{proof}
Indeed, since $c \in C(H)$ and $d\in H,$ we have $c+d=d+c$ in the group $H.$ Then $n(c-d)=nc-nd=0$ so that $c=d.$
\end{proof}

\begin{theorem}\label{th:nperpog}
Let $E$ be a  PEA   and let $n\ge 1$ be an integer. Then $E$ is a strong $n$-perfect PEA if and only if
there exists a torsion-free directed  po-group $G$ such that $\mathbb Z\lex G$ satisfies {\rm(RDP)}$_1,$ and
$E$ is isomorphic to
$\Gamma(\mathbb{Z}\overrightarrow{\times}G,(n,0)).$

If it is a case, $G$ is unique and satisfies {\rm(RDP)}$_1.$
\end{theorem}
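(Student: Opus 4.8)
The plan is to prove the two implications separately, with the reverse direction being routine and the forward direction carrying the real content. For the reverse direction, assume $G$ is torsion-free and directed with $\mathbb Z\lex G$ satisfying (RDP)$_1$ and $E\cong\Gamma(\mathbb Z\lex G,(n,0))$. Then Example \ref{ex:nperf} shows $E$ is $n$-perfect, the group-to-algebra half of \cite[Thm 5.7]{DvVe01b} gives that $E$ inherits (RDP)$_1$, and $\mathbb Z\lex G$ is torsion-free because $G$ is. Finally $c=(1,0)$ satisfies $nc=(n,0)=u$ and lies in $C(\mathbb Z\lex G)$, since the group operation is componentwise and $(1,0)+(b,h)=(1+b,h)=(b,h)+(1,0)$; thus $c$ is a strong cyclic element and $E$ is strong $n$-perfect by Definition \ref{de:strongper}.

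For the forward direction, suppose $E$ is strong $n$-perfect, so $E=\Gamma(H,u)$ for a torsion-free unital po-group $(H,u)$ and there is a central strong cyclic element $c\in E_1$ with $nc=u$. I would take $G$ to be the unique directed po-group with $G^{+}=E_0$ provided by Proposition \ref{pr:pogid}; as in the proof of Proposition \ref{pr:eq}, this $G$ is a subgroup of $H$, hence torsion-free. Since (RDP)$_1$ forces (RDP)$_0$, Proposition \ref{pr:uddirect} gives condition (e), and Theorem \ref{th:infinit} gives the slice ordering $E_0\leqslant E_1\leqslant\cdots\leqslant E_n$. The key map is $\Phi:\mathbb Z\lex G\rightarrow H$, $\Phi(k,g)=kc+g$, which is a well-defined group homomorphism precisely because $c$ is central, and it sends $(n,0)$ to $u$.

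The heart of the argument is to show $\Phi$ is an isomorphism of po-groups. First I would establish the decomposition $x=ic+g$ with $g\in G$ for every $x\in E_i$: by condition (e) one has $x\sim_{E_0}ic$, as both lie in the same slice (cf.\ Proposition \ref{pr:equcladd}), so $x-ic\in E_0-E_0\subseteq G$. Since $u$ is a strong unit and $E$ has (RDP)$_0$, the interval $[0,u]=E$ generates $H$ as a group, so the image of $\Phi$, containing $E$, is all of $H$; this gives surjectivity. Injectivity follows from slice disjointness: if $kc+g=0$ with $g=a-b$, $a,b\in E_0$, then $a+kc=b$ would place the element $b\in E_0$ into slice $k$, which is impossible unless $k=0$, forcing $g=0$. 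For the order, the decisive fact is that $c$ dominates $G$: every $g=a-b\in G$ satisfies $-c\leqslant g\leqslant c$ because $E_0\leqslant E_1\ni c$; hence $kc+g\geqslant 0$ in $H$ iff $k\geqslant 1$ or $k=0$ and $g\geqslant 0$, which is exactly lexicographic positivity of $(k,g)$. I expect this order-compatibility, together with surjectivity, to be the main obstacle, since it is where the infinitesimal character of $E_0=\Rad(E)$ relative to $c$ must be used in full.

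Restricting the isomorphism $\Phi$ to the unit interval yields $E=\Gamma(H,u)\cong\Gamma(\mathbb Z\lex G,(n,0))$. Because $E$ satisfies (RDP)$_1$ and $\mathbb Z\lex G$ is generated by this interval (as $(n,0)$ is a strong unit), the uniqueness in \cite[Thm 5.7]{DvVe01b} identifies $\mathbb Z\lex G$ with the canonical representing po-group, so $\mathbb Z\lex G$ satisfies (RDP)$_1$; restricting the refinements of any identity in $(\mathbb Z\lex G)^{+}$ to the slice $\{0\}\times G^{+}$ then transfers (RDP)$_1$ to $G$ itself. Uniqueness of $G$ is immediate: $G^{+}=E_0=\mbox{\rm Infinit}(E)=\Rad(E)$ is determined by $E$, and Proposition \ref{pr:pogid} makes the directed po-group with this positive cone unique.
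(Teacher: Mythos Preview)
Your reverse direction and the overall strategy for the forward direction are sound, and the core computations (centrality of $c$, the bound $-c\leqslant g\leqslant c$, the slice decomposition $x=ic+g$ via Proposition~\ref{pr:equcladd}) are exactly the right ingredients. Your route is genuinely different from the paper's: the paper defines the PEA map $\varphi:E\to\Gamma(\mathbb Z\lex G,(n,0))$, $x\mapsto(i,-ic+x)$, verifies directly that it is a PEA isomorphism, and only \emph{afterwards} invokes the uniqueness clause of \cite[Thm 5.7]{DvVe01b} to identify $(H,u)$ with $(\mathbb Z\lex G,(n,0))$. You instead build the po-group map $\Phi:\mathbb Z\lex G\to H$ first and try to prove it is a po-group isomorphism before restricting. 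The advantage of the paper's order is that it never needs global information about $H$; your approach is more conceptual but exposes you to the following issue.

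The claim ``since $u$ is a strong unit and $E$ has (RDP)$_0$, the interval $[0,u]=E$ generates $H$'' is not correct as stated: Definition~\ref{de:strongper} only asks that \emph{some} torsion-free unital $(H,u)$ with $E=\Gamma(H,u)$ exist, and such an $H$ need not be generated by $E$. For a concrete counterexample take $H=\mathbb Z^2$ with positive cone $\{(a,b):a>0\}\cup\{(0,0)\}$ and $u=(1,0)$; then $\Gamma(H,u)=\{(0,0),(1,0)\}$ is a strong $1$-perfect PEA with (RDP)$_1$, yet it generates only $\mathbb Z\times\{0\}$. So $\Phi$ need not be surjective onto $H$. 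The fix is painless: everything you proved shows that $\Phi$ is an injective order-embedding whose image contains $E$, hence its restriction to $[0,(n,0)]$ is a PEA isomorphism onto $E=[0,u]$, which is all the theorem requires; then invoke \cite[Thm 5.7]{DvVe01b} exactly as you do to get (RDP)$_1$ for $\mathbb Z\lex G$. (Equivalently, you may at the outset replace $H$ by the canonical representing group from \cite[Thm 5.7]{DvVe01b}, which \emph{is} generated by $E$ and has (RDP)$_1$; this is in effect what the paper tacitly does when it writes ``$(H,u)$ satisfying (RDP)$_1$'' at the start of its forward proof.) A small related point: your injectivity argument ``$a+kc=b$ puts $b$ in slice $k$'' is only literally valid for $0\leqslant k\leqslant n$; for general $k\in\mathbb Z$ you should combine your bound $-c\leqslant g\leqslant c$ with torsion-freeness of $H$ and the fact that $c\notin G$.
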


\begin{proof}  If there exists a torsion-free directed po-group $G$  such that  $E$ is isomorphic to
$\Gamma(\mathbb{Z}\overrightarrow{\times}G,(n,0)),$ then $E$ is an $n$-perfect PEA with a unique strong cyclic element $(1,0)$ of order $n.$ Hence, $E$ is a strong $n$-perfect PEA.

Conversely, assume  that $E$ is a strong $n$-perfect PEA with an $n$-decomposition $(E_{0},\ldots,E_{n})$ and $E=\Gamma(H,u)$ for a torsion-free unital  po-group $(H,u)$ satisfying (RDP)$_1.$ By \cite[Thm 5.7]{DvVe01b}, $(H,u)$ is a unique (up to isomorphism of unital po-groups) unital po-group with (RDP)$_1.$  By Lemma \ref{le:unique}, there exists a unique strong cyclic element $c\in E_{1}$ such that $nc=u$ and $c+g=g+c$ for any $g\in H$. Thus,
$E_{i}= (ic)/\sim_{E_{0}}$ for $i\in \{0,1,\ldots,n\}.$
Furthermore, by Proposition \ref{pr:pogid}, there exists a directed po-group $G$ such that $G$ is a subgroup of $H$ and
$E_{0}=G^{+}\subseteq H^{+}.$

We define a mapping $\varphi:E\rightarrow \Gamma(\mathbb{Z}\overrightarrow{\times}G,(n,0))$ as follows, if $x\in E_{i},$ then
$\varphi(x)=(i, (ic)/x),$ $i\in\{0,1,\ldots,n\}$.
Since $E$ is an interval PEA, by Proposition \ref{pr:eq},  the  condition  $(\ast)$ holds, which implies that $\phi$ is defined well, and $\varphi(0)=(0,0)$, $\varphi(u)=(n,0).$

Assume  $x+ y$ exists in $E$ for $x,y\in E.$ Then there exist
unique $i,j\in \{0,1,\ldots,n\}$ such that $x\in E_{i},$ $y\in E_{j},$ and so $x+ y\in E_{i+j}$. By the definition of
$\varphi$, we have that  $\varphi(x+ y)=(i+j,((i+j)c)/(x+ y)).$ Since for any $g\in H,$ $c+g=g+c,$ we have that $-c+g=g-c,$
which implies $((i+j)c)/(x+ y)$
$=(ic+jc)/(x+y)=-(ic+jc)+x+y=-jc-ic+x+y=
-ic+x-jc+y=(ic)/x+(jc)/y,$ and
so $(i+j,((i+j)c)/(x+ y)) =(i+j,(ic)/x+(jc)/y)$
$=(i,(ic)/x)+(j,(jc)/y)$, which implies $\varphi(x+ y)=\varphi(x)+\varphi(y)$. Thus, $\varphi$ is a morphism
between pseudo-effect algebras. Assume $\varphi(x)=(i,(ic)/x),$
$\varphi(y)=(j,(jc)/y),$ and $\varphi(x)\leqslant \varphi(y).$ There are the following two  cases. (i) If $i=j,$ then $(ic)/x\leqslant (jc)/y,$
and so $x\leqslant y.$ (ii) If $i<j$,  then $x\in E_{i},$ $y\in E_{j},$ which implies that $x<y$ by Theorem \ref{th:infinit} (ii).
Thus, $\varphi$ is a monomorphism. For any $g\in G^{+}$, $\varphi^{-1}((0,g))=g\in E_{0},$ $\varphi^{-1}((n,-g))=g^{-}\in E_{n}$. Assume that $(i, g)\in\Gamma(\mathbb{Z}\overrightarrow{\times}G,(n,0))$ and $i\in\{1,\ldots,n-1\}.$  Then $ic+ g=\varphi^{-1}((i,g))$. Thus,
$\varphi$ is surjective.

Hence, $\varphi$ is an isomorphism from the  strong $n$-perfect  pseudo-effect algebra $E$ onto the PEA $\Gamma(\mathbb{Z}\overrightarrow{\times}G,(n,0)).$ Since $(H,u)$ is a unique unital po-group with (RDP)$_1$ such that $E=\Gamma(H,u)$ and $E$ is isomorphic with $\Gamma(\mathbb Z\lex G,(n,0)),$ we have by \cite[Thm 5.7]{DvVe01b} that $(H,u)$ and $(\mathbb Z\lex G,(n,0))$ are isomorphic unital po-groups with (RDP)$_1$ and whence, $\mathbb Z\lex G$ is torsion-free. We show that also  $G$ is torsion-free. Indeed, assume that, for some integer $n\ne 0$ and some element $g \in G,$ we have $ng=0.$ But $ng$ belongs also to the torsion-free po-group $H$, whence, $g=0.$

Finally, since $\mathbb Z\lex G$ satisfies (RDP)$_1$, then clearly so does $G.$
\end{proof}

It is worthy to recall that we do not know whether if a directed po-group $G$ has (RDP)$_1$, does have (RDP)$_1$ also $\mathbb Z\lex G$ ? This is know only for Abelian po-group, see  \cite[Cor 2.12]{Good86}.

\begin{corollary}\label{co:uniqc}
Let  $E$ be a strong $n$-perfect PEA. Then:
\begin{itemize}
\item[{\rm(i)}] There exists a unique strong cyclic element of order $n$ in $E$.

\vspace{-2mm}\item[{\rm(ii)}] There exists a  unique $n$-decomposition $(E_{0},\ldots,E_{n})$
of $E$.

\vspace{-2mm}\item[{\rm(iii)}]  The state $s:E\rightarrow [0,1]$ such that
$s(E_{i})=\frac{i}{n}$ for $i\in\{0,\ldots,n\}$ is extremal.

\vspace{-2mm}\item[{\rm(iv)}] $E$ is a symmetric PEA.
\end{itemize}

\end{corollary}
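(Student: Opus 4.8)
The plan is to base everything on Theorem \ref{th:nperpog}. Since $E$ is a strong $n$-perfect PEA, that theorem supplies a torsion-free directed po-group $G$ with $\mathbb Z\lex G$ satisfying (RDP)$_1$ and an isomorphism $\varphi\colon E\to\Gamma(\mathbb Z\lex G,(n,0))$, where $E=\Gamma(H,u)$ for a torsion-free unital po-group $(H,u)$. Because $E$ satisfies (RDP)$_1$ hence (RDP)$_0$, Proposition \ref{pr:uddirect} shows that $E$ enjoys condition (e); thus Proposition \ref{pr:equcladd} and Theorem \ref{th:nperfect} apply, $E_0=\mathrm{Infinit}(E)=\Rad(E)=\Rad_n(E)$ is the unique maximal ideal by Lemma \ref{le:radeq}, and by Proposition \ref{pr:equcladd} the $\sim_{E_0}$-classes are exactly $E_0,\dots,E_n$. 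All four parts will be read off from this set-up.

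Parts (i) and (iv) are short. For (i), Definition \ref{de:strongper}(ii) guarantees a strong cyclic element $c\in E_1$ with $nc=u$ and $c\in C(H)$; Lemma \ref{le:unique} then forces any $d$ with $nd=u$ to equal $c$, since $c$ commutes with $d$ and $n(c-d)=0$ in the torsion-free group $H$. Hence $c$ is the unique strong cyclic element of order $n$. For (iv), the isomorphism $\varphi$ identifies $E$ with $\Gamma(\mathbb Z\lex G,(n,0))$, which is symmetric by Example \ref{ex:symPEA} (the second coordinate $0$ is a commutator); as the unary operations $a\mapsto a^-$ and $a\mapsto a^\sim$ are preserved by PEA-isomorphisms, $a^-=a^\sim$ for every $a\in E$, so $E$ is symmetric by Remark \ref{def:t-norm}.

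The crux is (ii). By the bijection of Theorem \ref{th:uniq} it suffices to show that $E$ admits a unique $(n+1)$-valued discrete state. Let $t$ be any such state. Its kernel $\Ker(t)$ is a proper normal ideal, and every $x\in\mathrm{Infinit}(E)$ has $m\,t(x)\le 1$ for all $m$, whence $t(x)=0$; thus $E_0=\mathrm{Infinit}(E)\subseteq\Ker(t)\subsetneq E$, and maximality of $E_0$ yields $\Ker(t)=E_0$. Since $t$ vanishes on $E_0$ and is additive, it is constant on each $\sim_{E_0}$-class: if $x\setminus a=y\setminus b$ with $a,b\in E_0$, then $t(x)=t(x\setminus a)=t(y\setminus b)=t(y)$. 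As the classes are $E_0,\dots,E_n$ and $|t(E)|=n+1$, the values $t(E_0),\dots,t(E_n)$ are distinct; combined with monotonicity of $t$ and the chain $E_0\leqslant E_1\leqslant\cdots\leqslant E_n$ from Theorem \ref{th:infinit}, they are strictly increasing, and being drawn from $\{0,\tfrac1n,\dots,1\}$ they must equal $t(E_i)=\tfrac in$. Hence $t=s$, and the $n$-decomposition is unique.

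Finally (iii). Suppose $s=\alpha s_1+(1-\alpha)s_2$ with states $s_1,s_2$ and $\alpha\in(0,1)$. On $E_0=\Ker(s)$ we have $0=\alpha s_1(x)+(1-\alpha)s_2(x)$ with all terms nonnegative, forcing $s_1$ and $s_2$ to vanish on $E_0$; and from $nc=1$ we get $s_1(c)=s_2(c)=\tfrac1n$. Each $x\in E_i$ satisfies $x\sim_{E_0}ic$ (Proposition \ref{pr:equcladd}), so by the class-constancy argument used above $s_j(x)=s_j(ic)=i\,s_j(c)=\tfrac in=s(x)$ for $j=1,2$. Therefore $s_1=s_2=s$ and $s$ is extremal. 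The main obstacle is part (ii): everything there hinges on first pinning $\Ker(t)=E_0$ via the maximality of $E_0$, and then upgrading the mere distinctness of the $n+1$ class-values, through the order chain $E_0\leqslant\cdots\leqslant E_n$, into the exact values $i/n$.
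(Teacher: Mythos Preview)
Your proof is correct, and it rests on the same representation theorem (Theorem~\ref{th:nperpog}) as the paper, so the overall strategy is the same; parts (i) and (iv) match the paper's argument essentially verbatim.

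The organizational difference lies in (ii) and (iii). The paper transports the problem to the concrete model $\Gamma(\mathbb{Z}\lex G,(n,0))$ and shows in (iii) that $s$ is the \emph{unique} state on $E$ (not merely extremal): for any state $s_1$ one has $E_0=\mathrm{Infinit}(E)\subseteq\Ker(s_1)\subseteq E_0$, then $s_1(1,0)=1/n$, and a directedness-and-squeeze argument gives $s_1(i,g)=i/n$. Uniqueness of the state immediately yields both extremality and, via the bijection of Theorem~\ref{th:uniq}, the uniqueness of the $n$-decomposition, so (ii) is a one-line consequence. You instead treat (ii) and (iii) separately and work intrinsically with the cyclic element $c$ and the $\sim_{E_0}$-classes: your (ii) uses the extra hypothesis $|t(E)|=n+1$ to force distinctness of the class-values, and your (iii) proves only extremality. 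This is perfectly valid for the stated corollary, but note that your argument in (iii) actually establishes the paper's stronger conclusion if you observe that any state vanishes on $E_0=\mathrm{Infinit}(E)$ (via $m\,t(x)\le 1$), so the convex-combination hypothesis is not needed; once you have that, your (ii) becomes superfluous. The paper's route is thus slightly more economical, while yours makes the role of condition~(e) and the Riesz-congruence classes more explicit.
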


\begin{proof}
By Theorem \ref{th:nperpog}, there exists a torsion-free directed po-group
$G$ such that $E$ is isomorphic to
$\Gamma(\mathbb{Z}\overrightarrow{\times}G,(n,0)).$

(i) The element $(1,0)$ is a unique strong cyclic element of order $n$
of $\Gamma(\mathbb{Z}\overrightarrow{\times}G,(n,0)),$ which implies
that there exists a unique strong cyclic element of order $n$ in
$E,$ see see Lemma \ref{le:unique}.

(ii) The pseudo-effect algebra
$\Gamma(\mathbb{Z}\overrightarrow{\times}G,(n,0))$ admits a unique
$n$-decomposition $(E_{0},\ldots,E_{n}),$ where $E_{0}=\{(0,g)\mid
g\in G^{+}\},$ $E_{n}=\{(n,-g)\mid g\in G^{+}\},$ $E_{i}=\{(i,g)\mid
g\in G\},$ for $i\in \{0,\ldots,n-1\}.$

(iii) It easy to see that a function
$s:\Gamma(\mathbb{Z}\overrightarrow{\times}G,(n,0))\rightarrow
[0,1]$ such that $s(i,g)=\frac{i}{n}$ for $i\in\{0,\ldots,n\}$ is a unique state on
$\Gamma(\mathbb{Z}\overrightarrow{\times}G,(n,0))$. Indeed, let  $s_1$ be a state on $\Gamma(\mathbb{Z}\overrightarrow{\times}G,(n,0)).$ It is clear that $E_0=\mbox{Infinit}(E) \subseteq \Ker(s_1).$  On the other hand $\Ker(s_1)\subseteq E_0$ because, $E_0$ is a maximal ideal, which yields $\Ker(s_1)=E_0.$ Moreover, $1=s_1(n(1,0))= ns_1(1,0)$ which gives $s_1(1,0) = 1/n.$ Let $g \geqslant 0,$ then $(1,g)=(1,0)+(0,g)$ which yields $s_1(1,g)=1/n.$  Let $g\in G$ be arbitrary. Since $G$ is  directed, every element $g= g_1-g_2$ for some $g_1,g_2\geqslant 0.$  Hence, $(1,g) \leqslant (1,g_1)$ which entails $s_1(1,g) \leqslant \frac{1}{n},$ and similarly, $s_1(i,g)\leqslant \frac{i}{n}$ for $i=1,\ldots,n-1.$  Therefore, $1=s_1(1,g)+s_1(n-1,-g)\leqslant \frac{1}{n} + \frac{n-1}{n}=1$ which implies $s_1(1,g)=\frac{1}{n}$ and $s_1(E_i) = \frac{i}{n}$ for any $i=0,1,\ldots,n.$

Hence, $E$ admits a
unique state $s$ such that $s(E_{i})=\frac{i}{n}$ for
$i\in\{0,\ldots,n\}$, and so it is also extremal.

(iv) The PEA
$\Gamma(\mathbb{Z}\overrightarrow{\times}G,(n,0))$ is a symmetric
PEA, and so $E$ is also symmetric.
\end{proof}

\begin{theorem}\label{th:hom}
Let  $E$ and $F$ be two strong $n$-perfect PEAs, and
$(E_{0},\ldots,E_{n})$ and $(F_{0},\ldots,F_{n})$ be
$n$-decompositions of $E$ and $F$, respectively. If $f:E\rightarrow
F$ is a homomorphism between  $E$ and $F$ and if $G$ and $H$ are
two directed  po-groups which are determined by $E$
and $F,$ respectively, by the property $G^{+}=E_{0}$ and $H^{+}=F_{0}$, then
\begin{itemize}
\item[{\rm(i)}] $f(E_{i})\subseteq F_{i}$, for $i\in \{0,1,\ldots,n\},$

\item[{\rm(ii)}] there exists a unique homomorphism $\widehat{f}:G\rightarrow H$
such that for any $g\in G^{+}$, $\widehat{f}(g)=h$ iff
$f(g)=h.$
\end{itemize}
\end{theorem}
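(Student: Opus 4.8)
The plan is to deduce (i) from the uniqueness of the state on a strong $n$-perfect PEA and then to obtain (ii) by lifting $f$ to the enveloping po-groups and restricting it to the infinitesimal part. For (i), let $s_E,s_F$ be the states from Corollary \ref{co:uniqc}(iii), normalised so that $s_E(E_i)=\frac{i}{n}$ and $s_F(F_i)=\frac{i}{n}$; by that corollary each of $E$ and $F$ admits exactly one state, and by Theorem \ref{th:nstate} the decompositions are recovered as $E_i=s_E^{-1}(\{\frac{i}{n}\})$ and $F_i=s_F^{-1}(\{\frac{i}{n}\})$. Since $f$ is a PEA-homomorphism, $f(0)=0$, $f(1)=1$ and $f(a+b)=f(a)+f(b)$ whenever $a+b$ is defined, so the composite $s_F\circ f\colon E\to[0,1]$ is again a state on $E$. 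By uniqueness of the state on $E$ we get $s_F\circ f=s_E$. Hence, for $x\in E_i$, $s_F(f(x))=s_E(x)=\frac{i}{n}$, which gives $f(x)\in F_i$; thus $f(E_i)\subseteq F_i$ for every $i\in\{0,1,\dots,n\}$.

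For (ii), the case $i=0$ of (i) says $f(E_0)\subseteq F_0$, so $f$ restricts to a map $G^{+}\to H^{+}$ preserving $0$ and the addition (which is total on the positive cones). By Theorem \ref{th:nperpog} we identify $E=\Gamma(\mathbb{Z}\lex G,(n,0))$ and $F=\Gamma(\mathbb{Z}\lex H,(n,0))$, where both $\mathbb{Z}\lex G$ and $\mathbb{Z}\lex H$ are unital po-groups satisfying (RDP)$_1$; under this identification $E_0=\{(0,g)\mid g\in G^{+}\}$, $F_0=\{(0,h)\mid h\in H^{+}\}$, and $g\mapsto(0,g)$ embeds $G$ as the subgroup $\{0\}\times G$ (and likewise for $H$). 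As $f$ is a morphism of PEAs with (RDP)$_1$, the categorical equivalence \cite[Thm 5.7]{DvVe01b} lifts it to a unique unital po-group homomorphism $\bar f\colon\mathbb{Z}\lex G\to\mathbb{Z}\lex H$ whose restriction to $E$ is $f$.

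It then remains to restrict $\bar f$ to the infinitesimal subgroup. For $g\in G^{+}$ we have $(0,g)\in E_0$, hence $\bar f(0,g)=f(0,g)\in F_0\subseteq\{0\}\times H$ by (i); since $G$ is generated by $G^{+}$ (Proposition \ref{pr:pogid}) and $\bar f$ is a group homomorphism, it follows that $\bar f(\{0\}\times G)\subseteq\{0\}\times H$. We may therefore define $\widehat f\colon G\to H$ by the rule $\bar f(0,g)=(0,\widehat f(g))$; since $\bar f$ is a group homomorphism and $g\mapsto(0,g)$ is a group isomorphism onto $\{0\}\times G$, the map $\widehat f$ is a group homomorphism, and for $g\in G^{+}$ the equality $\bar f(0,g)=f(0,g)$ is precisely the asserted equivalence $\widehat f(g)=h\Leftrightarrow f(g)=h$. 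Uniqueness is automatic, because $G$ is generated by $G^{+}$, so any homomorphism agreeing with $f$ on $G^{+}$ is already determined on all of $G$.

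The step I expect to be the main obstacle is exactly this passage from the monoid homomorphism $f|_{G^{+}}\colon G^{+}\to H^{+}$ to a group homomorphism on all of $G$, since $G$ and $H$ need not be Abelian; the argument above sidesteps the non-Abelian well-definedness problem (which would otherwise arise if one tried to set $\widehat f(a-b):=f(a)-f(b)$ for $a,b\in G^{+}$ by hand, using directedness to write every element as such a difference) by importing the already-constructed group homomorphism $\bar f$ from the categorical equivalence and merely reading off its effect on the second coordinate.
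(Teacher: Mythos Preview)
Your proof is correct and takes a genuinely different route from the paper in both parts.

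For (i), the paper argues elementarily: it first uses $E_0=\mbox{Infinit}(E)$ to get $f(E_0)\subseteq F_0$ (since homomorphisms preserve infinitesimals), then uses $E_n=E_0^{-}$ for the top slice, and for $1\leqslant i\leqslant n-1$ it exploits the strong cyclic element $(1,0)$, showing $f(1,0)\in F_1$ from $nf(1,0)=(n,0)$ and then bootstrapping to arbitrary $(1,g)$ via the decomposition $(1,g)=(0,g_1)+(1,-g_2)$; finally it uses $E_i=iE_1$. Your argument replaces all of this by one stroke: the composite $s_F\circ f$ is a state on $E$, and uniqueness of the state on a strong $n$-perfect PEA forces $s_F\circ f=s_E$. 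This is cleaner and makes the structural reason transparent. One small caveat: the \emph{statement} of Corollary~\ref{co:uniqc}(iii) only says the state is extremal; the uniqueness you invoke is proved in the body of that corollary, so you may want to cite the proof rather than the statement.

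For (ii), the paper constructs $\widehat f$ by hand: it sets $f_1:G^+\to H^+$ from $f$ on $E_0$, then defines $\widehat f(g)=f_1(g_1)-f_1(g_2)$ for $g=g_1-g_2$ and checks well-definedness and additivity directly, negotiating the non-commutativity by comparing the two representations $g_1-g_2=-h_1+h_2$. You instead invoke the categorical equivalence \cite[Thm~5.7]{DvVe01b} to lift $f$ to a unital po-group homomorphism $\bar f:\mathbb Z\lex G\to\mathbb Z\lex H$ and then simply read off $\widehat f$ on the second coordinate, using (i) to see that $\bar f$ preserves $\{0\}\times G$. This is legitimate since both $\mathbb Z\lex G$ and $\mathbb Z\lex H$ satisfy (RDP)$_1$ by Theorem~\ref{th:nperpog}, and it sidesteps the somewhat delicate well-definedness check in the non-Abelian setting that you rightly flag. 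The paper's approach has the advantage of being self-contained and showing explicitly that nothing beyond directedness of $G$ is needed for the extension; your approach trades that transparency for brevity and robustness.
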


\begin{proof} Since  $E$ and $F$ are two strong $n$-perfect PEAs, by Theorem \ref{th:nperpog}, there exist unique directed torsion-free  po-groups  $G$ and $H$ with (RDP)$_1$
such that $E$ and $F$ are isomorphic to
$\Gamma(\mathbb{Z}\overrightarrow{\times}G,(n,0))$ and
$\Gamma(\mathbb{Z}\overrightarrow{\times}H,(n,0))$. Thus, we can
assume that $E=\Gamma(\mathbb{Z}\overrightarrow{\times}G,(n,0))$ and
$F=\Gamma(\mathbb{Z}\overrightarrow{\times}H,(n,0))$, and
$E_{0}=\{(0,g)\mid g\in G^{+}\}$, $E_{n}=\{(0,-g)\mid g\in G^{+}\}$,
$F_{0}=\{(0,h)\mid h\in H^{+}\},$ $F_{n}=\{(0,-h)\mid h\in H^{+}\}$,
and for any $i\in\{1,\ldots,n-1\},$ $E_{i}=\{(i,g)\mid g\in G\}$,
$F_{i}=\{(i,h)\mid h\in H\}.$

(i) For any $(0,g)\in E_{0}=\mbox{\rm Infinit}(E)$ and any integer $k\ge 1$, $k(0,g)=(0,kg)\in E_0,$ we have $f(0,kg)=kf(0,g)$, we conclude $f(0,g)\in F_0=\mbox{\rm Infinit}(F).$  Thus, we have that $f(E_{0})\subseteq
F_{0}.$ Further, by $E_{n}=E_{0}^{-}$  and $F_{n}=F_{0}^{-}$, we
have that $f(E_{n})\subseteq F_{n}.$

Assume $n>1.$
For  $(1,0)\in E_{1},$  by $n(1,0)=(n,0),$ we have that $nf(1,0)=(n,0),$ which implies that $f(1,0)\in F_{1}.$ For any
$(1,g),$ there exists $g_{1},g_{2}\in G^{+}$ such that $g=g_{1}-g_{2}$ and so $(1,g)=(0,g_{1})+(1,-g_{2}).$ Now,
$f(1,-g_{2})\leqslant f(1,0)$ which entails $f(1,-g_{2})\in F_0\cup F_{1}.$
But $ f(1,0)= f(1,-g_2)+ f(0,g_2)\in F_1$ and $f(0,g_2) \in F_0,$ so that $f(1,-g_2)\in F_1.$   Consequently, $f(1,g) = f(0,g_1)+f(1,-g_2) \in F_1$ for any $g \in G.$

In the same way, we can show that $f(i,g)\in F_j$ for some $j=0,1,\ldots,i<n.$  In any rate, we state $f(1,g) \in F_1.$ If not, then $f(1,g)\in F_0$ and $f(n-1,g)\in F_j$ for some $j=0,1,\ldots,i.$ But $f(n,0)\in F_n$ and $f(n,0)=f(1,g)+f(n-1,-g) \in F_0+F_j = F_j \subseteq E\setminus F_n,$ which is absurd.

Since for any $i\in \{1,\ldots,n-1\},$ $E_{i}=iE_{1}$, and so we have that $f(E_{i})\subseteq F_{i}$.

(ii) We define $f_{1}:G^{+}\rightarrow H^{+}$ as follows:  for $g\in  G^{+}$,   $f_{1}(g)=h$ iff   $f(0,g)=(0,h).$ Obviously, $f_{1}$
is defined well and $f(0,g)=(0,f_{1}(g))$ for $g\in G^{+}$.
Furthermore, for any $g_{1},g_{2}\in G^{+},$ by
$f(0,g_{1}+g_{2})=f(0,g_{1})+ f(0,g_{2}) $ and $f(0,g)=(0,f_{1}(g))$, we have that
$f_{1}(g_{1}+g_{2})=f_{1}(g_{1})+f_{1}(g_{2}).$

We now define $\widehat{f}:G\rightarrow H$ as follows: for any
$g\in G,$ we $\widehat{f}(g)=f_1(g_{1})-f_1(g_{2})$ whenever $g=g_1-g_2,$ where $g_1,g_2 \geqslant 0.$  We assert that $\widehat f$ is a well-defined mapping.  Indeed, if $g=-h_1+h_2$ for some $h_1,h_2 \geqslant 0,$ then
$g=g_{1}-g_{2}=-h_{1}+h_{2},$
and  $h_{1}+g_{1}=h_{2}+g_{2},$ which
implies that $f_1(h_{1}+g_{1})=f_1(h_{2}+g_{2}),$ and so $f_1(g_{1})-f_1(g_{2})=-f_1(h_{1})+f_1(h_{2}).$ Thus, $\widehat{f}$ is
defined well.

For $g,h\in G,$ we want to verify that $\widehat{f}(g+h)=\widehat{f}(g)+\widehat{f}(h).$ We assume that $g=-g_{1}+g_{2},$ $h=h_{1}-h_{2},$ and $g+h=k_{1}-k_{2},$ then
$g_{2}+h_{1}-h_{2}=g_{1}+k_{1}-k_{2}$, which entails that
$g_{2}+h_{1}-h_{2}=g_{1}+k_{1}-k_{2},$ and so,
$f_1(g_{2})+f_1(h_{1})-f_1(h_{2})=f_1(g_{1})+f_1(k_{1})-f_1(k_{2}),$ hence,
$\widehat{f}(g+h)=\widehat{f}(g)+\widehat{f}(h).$ Thus,
$\widehat{f}$ is a group homomorphism.  Moreover, if $g\geqslant 0,$ then $\widehat f(g)\geqslant 0.$ Furthermore, by the
definition of $\widehat{f},$ we have that
$\widehat{f}(G^{+})\subseteq H^{+},$ and    $g\in G^{+}$, $\widehat{f}(g)=h$.

Now, if $k:G\rightarrow H$ is a homomorphism such that for $g\in G^{+}$,
$\widehat{f}(g)=h$ iff $f(0,g)=(0,h)$. Then
$\widehat{f}\mid_{G^{+}}=k_{G^{+}},$ since $G$ is directed, we have
that  $\widehat{f}=k.$
\end{proof}

Let $\mathcal{G}$ be the category whose objects are  torsion-free directed  po-groups $G$ such that $\mathbb Z\lex G$ satisfies (RDP)$_1$  and morphisms are  po-group homomorphisms. Let $\mathcal{SPPEA}_{n}$ be the category whose objects are strong
$n$-perfect PEAs and morphisms are homomorphisms of PEAs.

We define a functor
$\mathcal{E}_n:\mathcal{G}\rightarrow\mathcal{SPPEA}_{n}$ as follows: for $G\in \mathcal{G},$ let
$\mathcal{E}_n(G):=\Gamma(\mathbb{Z}\overrightarrow{\times}G,(n,0))$
and if $h$ is a group homomorphism with domain $G,$ we set
$$\mathcal{E}_n(h)(x)=(i,h((ic)/x)),$$
where $c$ is a unique strong cyclic
element of order $n$ in $E.$

\begin{theorem}\label{th:faifull}
The functor
$\mathcal{E}_n$ is a faithful and full functor from the category $\mathcal{G}$ of directed po-groups into the category
$\mathcal{SPPEA}_{n}$ of n-perfect PEAs.
\end{theorem}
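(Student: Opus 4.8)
The plan is to prove faithfulness and fullness separately, after fixing the concrete picture of $\mathcal{E}_n$ supplied by Theorem \ref{th:nperpog}. I would identify $\mathcal{E}_n(G)=\Gamma(\mathbb{Z}\lex G,(n,0))$ with its canonical $n$-decomposition $E_0=\{(0,g)\mid g\in G^+\}$, $E_i=\{(i,g)\mid g\in G\}$ for $1\le i\le n-1$, and $E_n=\{(n,-g)\mid g\in G^+\}$, the unique strong cyclic element being $c=(1,0)$. Under this identification the element $(ic)/x$ computed in $\mathbb{Z}\lex G$ is just $(0,g)$ for $x=(i,g)$, so that the map $\mathcal{E}_n(h)$ acts by the transparent rule $(i,g)\mapsto(i,h(g))$; I will use this description throughout.

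First I would settle faithfulness. Suppose $h_1,h_2\colon G\to H$ are po-group homomorphisms with $\mathcal{E}_n(h_1)=\mathcal{E}_n(h_2)$. Evaluating at $(0,g)$ with $g\in G^+$ gives $(0,h_1(g))=(0,h_2(g))$, so $h_1$ and $h_2$ agree on $G^+$. Since $G$ is directed, every element of $G$ is a difference $g_1-g_2$ of elements of $G^+$, whence $h_1=h_2$ on all of $G$. Thus $\mathcal{E}_n$ is faithful.

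Next I would treat fullness. Given a PEA homomorphism $f\colon\mathcal{E}_n(G)\to\mathcal{E}_n(H)$, Theorem \ref{th:hom} yields a unique po-group homomorphism $\widehat f\colon G\to H$ with $\widehat f(g)=h$ iff $f(0,g)=(0,h)$ for $g\in G^+$, and $\widehat f$ is a morphism of $\mathcal{G}$. It remains to verify $\mathcal{E}_n(\widehat f)=f$, that is, $f(i,g)=(i,\widehat f(g))$ for every $(i,g)\in E$. The case $i=0$ is the definition of $\widehat f$. The pivotal intermediate fact is $f(c)=(1,0)$: by Theorem \ref{th:hom}(i) we have $f(E_1)\subseteq F_1$, so $f(c)=(1,h_0)$ for some $h_0\in H$, and $n\,f(c)=f(nc)=f((n,0))=(n,0)$ forces $nh_0=0$, hence $h_0=0$ by torsion-freeness of $H$; consequently $f(ic)=(i,0)$. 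For $1\le i\le n-1$ I write $g=g_1-g_2$ with $g_1,g_2\in G^+$ (directedness) and decompose $(i,g)=(0,g_1)+(i,-g_2)$, where $(i,-g_2)=(ic)\backslash(0,g_2)$ because $(0,g_2)\leqslant ic$. Since $f$ preserves $+$ and the partial difference $\backslash$, I obtain $f(i,-g_2)=f(ic)\backslash f(0,g_2)=(i,0)\backslash(0,\widehat f(g_2))=(i,-\widehat f(g_2))$, and therefore $f(i,g)=(0,\widehat f(g_1))+(i,-\widehat f(g_2))=(i,\widehat f(g))$. The case $i=n$ follows from the $i=0$ case together with $f(x^-)=f(x)^-$. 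This gives $f=\mathcal{E}_n(\widehat f)$, proving fullness.

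The hard part will be the middle slices $1\le i\le n-1$, where the second coordinate ranges over all of $G$, so $f(i,g)$ cannot be recovered directly from the behaviour of $f$ on the cone $E_0$. The key to overcoming this is to first pin down $f(c)$, and hence $f(ic)$, via the torsion-freeness of $H$, and then to transport an arbitrary $(i,g)$ back to $E_0$ through the decomposition $(i,g)=(0,g_1)+(ic)\backslash(0,g_2)$, exploiting that a PEA homomorphism commutes with both $+$ and $\backslash$.
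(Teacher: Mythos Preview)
Your proof is correct and, for faithfulness, essentially identical to the paper's.

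For fullness the two arguments diverge slightly. The paper reconstructs the po-group homomorphism $h$ by hand from the restriction of $f$ to $E_0$ and extends it to all of $G$ using directedness---effectively redoing the construction already carried out in Theorem~\ref{th:hom}(ii)---and then stops, never checking that $\mathcal{E}_n(h)=f$ on the slices $E_1,\ldots,E_n$. You instead invoke Theorem~\ref{th:hom} to obtain $\widehat f$ immediately and then supply exactly the verification the paper omits: pinning down $f(c)=(1,0)$ via $nf(c)=(n,0)$ and torsion-freeness of $H$, and then transporting an arbitrary $(i,g)$ back to $E_0$ through $(i,g)=(0,g_1)+\bigl((ic)\backslash(0,g_2)\bigr)$. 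Your approach is cleaner (it reuses the earlier theorem rather than repeating it) and strictly more complete, since the equality $\mathcal{E}_n(\widehat f)=f$ on the middle slices genuinely requires the torsion-free argument you give and is not automatic.
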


\begin{proof}
Let $h_{1}$ and $h_{2}$ be two morphisms from $G_{1}$ into $G_{2}$ such that $\mathcal{E}(h_{1})=\mathcal{E}(h_{2}).$ Since both $G_{1}$ and $G_{2}$ are directed, it suffice to prove that $h_{1}(g)=h_{2}(g)$ for all $g\in G_{1}^{+}.$ By $\mathcal{E}(h_{1})=\mathcal{E}(h_{2}),$ then
$(0,h_{1}(g))=(0,h_{2}(g))$ for all $g\in G_{1}^{+},$ and hence $h_{1}=h_{2}.$

Let $f:\Gamma(\mathbb{Z}\overrightarrow{\times}G_{1},
(n,0))\rightarrow \Gamma(\mathbb{Z}\overrightarrow{\times}G_{2},(n,0))$ be a PEA homomorphism. Then for any $x\in G_{1}^{+}$,
there exists a unique $y\in G_{2}^{+}$ such that $f(0,x)=(0,y).$
Define a mapping $h:G_{1}^{+}\rightarrow G_{2}^{+}$ by $h(x)=y$
iff $f(0,x)=(0,y).$ Note for any $x_{1}, x_{2}\in G_{1}^{+},$
$h(x_{1}+ x_{2})=h(x_{1})+h(x_{2}).$ Since $G_{1}$ is directed, for
any $x\in G_{1}$, there exists $x_{1}, x_{2}, g_{1}, g_{2}\in
G_{1}^{+}$ such that $x=x_{1}-x_{2}$ and $x=-g_{1}+g_{2},$ then
$g_{1}+x_{1}=g_{2}+x_{2}$, and so
$h(g_{1})+h(x_{1})=h(g_{2})+h(x_{2}),$ which implies that $h(x_{1})-h(x_{2})=-h(g_{1})+h(g_{2})$. This shows that the assignment $h(x)=h(x_{1})-h(x_{2})$ is a well-defined extension of $h$ to the whole directed  po-group $G_{1},$ and $h$ is a po-group homomorphism.
\end{proof}

We say that  a {\it universal} group for a PEA $E$ is a pair $(G,\gamma)$ consisting of a directed po-group $G$ and a $G$-valued measure
$\gamma:E\rightarrow G$ (i.e., $\gamma(a+ b)=\gamma(a)+
\gamma(b)$ whenever $a+ b$ exists in $E$) such that  the following conditions hold:
(i) $\gamma(E)$ generates $G,$ and (ii) if $H$ is a group and
$\phi: E\rightarrow H$ is an $H$-valued
measure, then there exists a (unique) group homomorphism
$\phi^{\ast}:G\rightarrow H$ such that
$\phi=\phi^{\ast}\circ\gamma.$

\begin{theorem}\label{th:uni}
Let $E$ be a strong n-perfect PEA. Then the directed  po-group  $\mathbb{Z}\overrightarrow{\times}G$ from Theorem {\rm \ref{th:nperpog}} together with the isomorphism $\gamma: E\to \Gamma(\mathbb Z\lex G,(n,0))\subset \mathbb Z\lex G$ is  a universal group of $E$.
\end{theorem}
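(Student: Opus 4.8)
The plan is to verify directly the two defining conditions of a universal group for the pair $(\mathbb Z\lex G,\gamma)$. By Theorem \ref{th:nperpog} we may assume that $E=\Gamma(\mathbb Z\lex G,(n,0))$, that $\mathbb Z\lex G$ is torsion-free, directed and satisfies (RDP)$_1$, and that $\gamma$ is the inclusion $E\hookrightarrow\mathbb Z\lex G$; being a PEA-isomorphism followed by an inclusion, $\gamma$ preserves $+$ and so is a $(\mathbb Z\lex G)$-valued measure. For the generation condition, note that $\gamma(E)=\Gamma(\mathbb Z\lex G,(n,0))$ contains $(1,0)$ as well as every $(0,g)$ with $g\in G^{+}$; since $G$ is directed, the elements $(0,g)$, $g\in G^{+}$, already generate $\{0\}\times G$, and adjoining $(1,0)$ produces all of $\mathbb Z\lex G$. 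Hence $\gamma(E)$ generates $\mathbb Z\lex G$.

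The substantive part is the universal property. Let $H$ be a group and $\phi\colon E\to H$ an $H$-valued measure, and write $K=\mathbb Z\lex G$, $u=(n,0)$. As $u$ is a strong unit, every $w\in K^{+}$ satisfies $w\leqslant ku$ for some $k\geqslant 1$, and repeated use of (RDP)$_0$ furnishes a decomposition $w=w_1+\cdots+w_k$ with each $w_i\in[0,u]=E$. I would define $\phi^{\ast}(w):=\phi(w_1)+\cdots+\phi(w_k)$ and, granting independence of the chosen decomposition, observe that $\phi^{\ast}$ is then a monoid homomorphism $K^{+}\to H$ (concatenate decompositions) which agrees with $\phi$ on $E$ (a one-term decomposition, using that $\phi$ is a measure). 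Since $K$ is directed, $K^{+}$ is a cancellative monoid generating $K$, and the standard extension of a positive-cone homomorphism to the whole directed po-group (the companion to the Birkhoff construction used in Proposition \ref{pr:pogid}) yields a unique group homomorphism $\phi^{\ast}\colon K\to H$ prolonging it. By construction $\phi=\phi^{\ast}\circ\gamma$, and uniqueness of $\phi^{\ast}$ is forced because $\gamma(E)$ generates $K$.

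The main obstacle is precisely the well-definedness of $\phi^{\ast}$ on $K^{+}$: that $\phi(w_1)+\cdots+\phi(w_k)$ does not depend on how $w$ is split into elements of $E$. Here I would compare two decompositions $w=w_1+\cdots+w_k=w_1'+\cdots+w_k'$ (padded to equal length by zeros) and invoke the refinement supplied by (RDP)$_1$: there is a doubly-indexed family $\{c_{ij}\}\subseteq E$ with $w_i=\sum_j c_{ij}$ and $w_j'=\sum_i c_{ij}$, where the commutation clause of (RDP)$_1$ guarantees that the relevant cross terms commute, so the doubly-indexed sum may be reassembled in either order inside the possibly non-abelian group $H$. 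Applying $\phi$ then equates $\sum_i\phi(w_i)$ with $\sum_j\phi(w_j')$. It is this noncommutative refinement bookkeeping---not any property peculiar to $\mathbb Z\lex G$---that carries the weight of the proof, and the torsion-freeness and (RDP)$_1$ delivered by Theorem \ref{th:nperpog} are exactly what make the argument run.
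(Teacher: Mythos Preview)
Your proposal is correct but takes a genuinely different route from the paper's proof. The paper constructs $\phi^{\ast}$ by an explicit formula in the lexicographic coordinates: it first observes, using the symmetry of $E$ (cf.\ Corollary~\ref{co:uniqc}(iv)), that $\phi(1,0)-\phi(0,g)=-\phi(0,g)+\phi(1,0)$ for every $g\in G^{+}$, i.e.\ $\phi(1,0)$ commutes in $H$ with every $\phi(0,g)$; then it sets $\phi^{\ast}(m,g-h):=m\phi(1,0)+\phi(0,g)-\phi(0,h)$ and verifies well-definedness and the homomorphism identity by direct calculation. Your approach instead runs the general universal-group argument for interval PEAs with (RDP)$_1$: decompose elements of $K^{+}$ into summands from $E$ via the strong unit and (RDP)$_0$, and use (RDP)$_1$ refinements to prove independence of the decomposition. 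This is essentially the construction underlying \cite[Thm~5.7]{DvVe01b}, specialized to $K=\mathbb{Z}\lex G$. Your route is more conceptual and applies verbatim to any $\Gamma(K,u)$ with (RDP)$_1$; the paper's is more elementary for this particular $K$, trading the delicate noncommutative refinement bookkeeping (to pass commutation of the cross terms $c_{ij}$ from $K$ to $H$ one must know their sums already lie in $E$, which needs an extra argument you have not spelled out) for the single, easily checked commutation relation coming from the strong cyclic element $(1,0)$.
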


\begin{proof}
Let $E$ be a strong $n$-perfect PEA. By Theorem \ref{th:nperpog}, there is a unique torsion-free directed po-group $G$ such that $E$ is isomorphic with $\Gamma(\mathbb Z\lex G,(n,0)).$
Set $\mathcal{G}=\mathbb{Z}\overrightarrow{\times}G,$ and
$\gamma:E\rightarrow\mathbb{Z}\overrightarrow{\times}G$ be the embedding mapping, then:

(i) $\gamma(E)$ generates the group $\mathcal{G}$ and because $(1,0)$ is
a strong unit, $\mathcal{G}$ is directed.

(ii) Assume $\phi: E\rightarrow K$ is a $K$-valued measure. Then $\phi(0,0)=0_{H}.$ Notice that $E$ is symmetric, and so for any $g\in G^{+},$
$\phi(1,-g)=\phi((1,0)\backslash(0,g))=\phi((0,g)/(1,0)),$ which implies that $\phi(1,0)-\phi((0,g)=-\phi((0,g)+\phi(1,0).$ Define a mapping $\phi^{\ast}:\mathcal{G}\rightarrow H,$ as follows, for any $g,h\in G^{+}$,

(a) $\phi^{\ast}(0,g)=\phi(0,g)$,

(b) $\phi^{\ast}(0,-g)=-\phi(0,g)$,

(c) $\phi^{\ast}(0,g-h)=\phi(0,g)-\phi(0,h),$

(d) $\phi^{\ast}(0,-g+h)=-\phi(0,g)+\phi(0,h),$

(e) $\phi^{\ast}(1,g-h)=\phi(1,0)+\phi^{\ast}(0,g-h),$

(f) $\phi^{\ast}(m,g-h)=m\phi(1,0)+\phi^{\ast}(0,g-h).$

For $g\in G$, if there exist $g_{1},g_{2},h_{1},h_{2}\in G^{+},$
such that $(0,g)=(0,g_{1}-g_{2})=(0,h_{1}-h_{2}),$ then there exist $k_{1},k_{2}\in G^{+},$ such that $g=-k_{1}+k_{2},$ since $G$ is a directed po-group. Thus, we have that
$k_{1}+g_{1}=k_{2}+g_{2},$ $k_{1}+h_{1}=k_{2}+h_{2},$ which implies
that $\phi(0,k_{1})+\phi(0,g_{1})=\phi(0,k_{2})+\phi(0,g_{2}),$
$\phi(0,k_{1})+\phi(0,h_{1})=\phi(0,k_{2})+\phi(0,h_{2}),$ thus,
$-\phi(0,k_{1})+\phi(0,k_{2})=\phi(0,g_{1})-\phi(0,g_{2})=\phi(0,h_{1})-
\phi(0,h_{2}).$
Consequently, $\phi^{\ast}$ is defined well.

For any $g_{1},g_{2}, h_{1},h_{2}\in G^{+},$  there exists $k_{1},k_{2}\in G^{+}$ such that $ -g_{2}+ h_{1}=k_{1}-k_{2},$ and
so  $(0,g_{1}-g_{2})+(0,h_{1}-h_{2})=(0,g_{1}+k_{1}-k_{2}-h_{2}).$
Hence, $\phi^{\ast}((0,g_{1}-g_{2})+(0,h_{1}-h_{2}))$
$=\phi^{\ast}(0,g_{1}+k_{1}-k_{2}-h_{2})$
$=\phi(0,g_{1}+k_{1})-\phi(0,h_{2}+k_{2})$
$=\phi(0,g_{1})+\phi(0,k_{1})-\phi(0,k_{2})-\phi(0,h_{2})$
$=\phi(0,g_{1}+k_{1})-\phi(0,h_{2}+k_{2})$
$=\phi(0,g_{1})+\phi(0,k_{1})-\phi(0,k_{2})-\phi(0,h_{2})$
$=\phi(0,g_{1})+\phi^{\ast}(0,k_{1}-k_{2})-\phi(0,h_{2})$
$=\phi(0,g_{1})+\phi^{\ast}(0,-g_{2}+h_{1})-\phi(0,h_{2})$
$=\phi(0,g_{1})-\phi(0,g_{2})+\phi(0,h_{1})-\phi(0,h_{2})$
$=\phi(0,g_{1})-\phi(0,g_{2})+\phi(0,h_{1})-\phi(0,h_{2})$
$=\phi^{\ast}(0,g_{1}-g_{2})+\phi^{\ast}(0,h_{1}-h_{2}).$

Since for any $g\in G^{+}$, we have that
$\phi(1,0)-\phi(0,g)=-\phi(0,g)+\phi(1,0),$ which implies that
$\phi(0,g)+\phi(1,0)-\phi(0,g)=\phi(1,0),$ and so $\phi(0,g)+\phi(1,0)=\phi(1,0)+\phi(0,g).$ Thus, for any $g\in G,$
we have that
$\phi(1,0)+\phi^{\ast}(0,g)=\phi^{\ast}(0,g)+\phi(1,0).$

For $g_{1},g_{2}, h_{1},h_{2}\in G^{+},$
$(i,g_{1}-g_{2})+(j,h_{1}-h_{2})=(i+j,g_{1}-g_{2}+h_{1}-h_{2}),$ and so, $\phi^{\ast}((i,g_{1}-g_{2})+(j,h_{1}-h_{2}))$
$=\phi^{\ast}(i+j,g_{1}-g_{2}+h_{1}-h_{2})$
$=(i+j)\phi(1,0)+\phi^{\ast}(0,g_{1}-g_{2}+h_{1}-h_{2})$
$=i\phi(1,0)+j\phi(1,0)+\phi^{\ast}(0,g_{1}-g_{2})+\phi^{\ast}(0,h_{1}-h_{2})$
$=i\phi(1,0)+\phi^{\ast}(0,g_{1}-g_{2})+j\phi(1,0)+\phi^{\ast}(0,h_{1}-h_{2})$
$=\phi^{\ast}(i,g_{1}-g_{2})+\phi^{\ast}(j,h_{1}-h_{2}).$

Thus, $\phi^{\ast}$ is a group homomorphism with $\phi=\phi^{\ast}\circ\gamma.$
\end{proof}

Recall that a functor $\mathcal{E}$ from  a category $ \mathcal A$ into a category
$\mathcal B$ is said to be {\it left-adjoint} provided that for every $\mathcal B$-object
$B$ there exists an $\mathcal{E}$-universal arrow with domain $B,$  see \cite{AHG00}.

\begin{theorem}\label{th:weper}
The functor $\mathcal{E}_n$ has a left-adjoint.
\end{theorem}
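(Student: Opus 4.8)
The plan is to verify the universal-arrow criterion recalled just above from \cite{AHG00}: the functor $\mathcal{E}_n\colon\mathcal{G}\to\mathcal{SPPEA}_n$ admits a left adjoint precisely when every object $E$ of $\mathcal{SPPEA}_n$ carries an $\mathcal{E}_n$-universal arrow with domain $E$, that is, a pair $(G,\eta)$ with $G\in\mathcal{G}$ and a PEA-homomorphism $\eta\colon E\to\mathcal{E}_n(G)$ such that every PEA-homomorphism $f\colon E\to\mathcal{E}_n(G')$ with $G'\in\mathcal{G}$ factors as $f=\mathcal{E}_n(\bar f)\circ\eta$ for a unique po-group homomorphism $\bar f\colon G\to G'$. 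Hence it suffices to produce such an arrow for each strong $n$-perfect PEA $E$, and the whole argument will rest on the two theorems already proved, namely Theorem \ref{th:nperpog} and Theorem \ref{th:faifull}.

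First I would fix a strong $n$-perfect PEA $E$ and invoke Theorem \ref{th:nperpog} to obtain a torsion-free directed po-group $G$ with $\mathbb Z\lex G$ satisfying (RDP)$_1$---so that $G$ is an object of $\mathcal{G}$---together with an isomorphism of pseudo-effect algebras $\eta\colon E\to\Gamma(\mathbb Z\lex G,(n,0))=\mathcal{E}_n(G)$. I claim that $(G,\eta)$ is the desired $\mathcal{E}_n$-universal arrow with domain $E$. Note that $G$ is even unique, by the uniqueness clause of Theorem \ref{th:nperpog}, although only its existence is needed here.

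To check the universal property, take an arbitrary $G'\in\mathcal{G}$ and an arbitrary PEA-homomorphism $f\colon E\to\mathcal{E}_n(G')$. Since $\eta$ is an isomorphism, $f\circ\eta^{-1}\colon\mathcal{E}_n(G)\to\mathcal{E}_n(G')$ is a morphism between two objects in the image of $\mathcal{E}_n$; by the fullness of $\mathcal{E}_n$ (Theorem \ref{th:faifull}) there is a po-group homomorphism $\bar f\colon G\to G'$ with $\mathcal{E}_n(\bar f)=f\circ\eta^{-1}$, and therefore $\mathcal{E}_n(\bar f)\circ\eta=f$. For uniqueness, if $\bar f_1\colon G\to G'$ also satisfies $\mathcal{E}_n(\bar f_1)\circ\eta=f$, then $\mathcal{E}_n(\bar f_1)=f\circ\eta^{-1}=\mathcal{E}_n(\bar f)$, and the faithfulness of $\mathcal{E}_n$ (again Theorem \ref{th:faifull}) forces $\bar f_1=\bar f$. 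Thus $(G,\eta)$ is universal, and as $E$ ranges over all objects of $\mathcal{SPPEA}_n$, the criterion of \cite{AHG00} lets me conclude that $\mathcal{E}_n$ has a left adjoint.

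The argument is mostly bookkeeping, since all the content was already established: Theorem \ref{th:nperpog} supplies the object $G$ and the isomorphism $\eta$, while Theorem \ref{th:faifull} supplies the existence of $\bar f$ (fullness) and its uniqueness (faithfulness). The only point needing care is to transport $f$ along $\eta$ before applying fullness, so that one genuinely works with a morphism between images of $\mathcal{E}_n$; this is exactly where it matters that each universal arrow $\eta$ is an isomorphism. Indeed, this last observation shows more: $\mathcal{E}_n$ is full, faithful and essentially surjective, hence an equivalence of categories, whose left adjoint sends each strong $n$-perfect PEA to the group $G$ of Theorem \ref{th:nperpog} and acts on morphisms by the assignment $f\mapsto\widehat f$ furnished by Theorem \ref{th:hom}.
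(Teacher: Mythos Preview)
Your argument is correct but follows a genuinely different route from the paper's. The paper does not invoke Theorem \ref{th:faifull} here; instead it passes through Theorem \ref{th:uni}, which identifies $(\mathbb{Z}\lex G,\gamma)$ as a \emph{universal group} for $E$. Given a PEA-morphism $f'\colon E\to\mathcal{E}_n(G_1)$, the paper views $f'$ as a $(\mathbb{Z}\lex G_1)$-valued measure and uses the universal property of the universal group to produce a group homomorphism $f_1\colon\mathbb{Z}\lex G\to\mathbb{Z}\lex G_1$ with $f'=f_1\circ\gamma$; it then extracts $f\colon G\to G_1$ from $f_1$ by hand and appeals to Theorem \ref{th:hom} for uniqueness.

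Your approach bypasses the universal-group machinery entirely: since $\eta$ is an isomorphism, fullness of $\mathcal{E}_n$ gives existence of $\bar f$ and faithfulness gives uniqueness, in one stroke. This is cleaner and, as you observe, already contains the proof of Theorem \ref{th:weper2} (categorical equivalence), which the paper establishes separately. What the paper's route buys is a more explicit, group-theoretic construction of the adjoint via Theorem \ref{th:uni}, independent of the abstract full-and-faithful statement; your route buys economy and makes transparent that the existence of a left adjoint is an immediate formal consequence of $\mathcal{E}_n$ being an equivalence.
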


\begin{proof} By Theorem \ref{th:nperpog}, for any  strong $n$-perfect PEA $E,$  there
exists a unique torsion-free directed  po-group $G$ such that $\mathbb Z \lex G$ has (RDP)$_1,$   and by Theorem \ref{th:uni}, $(\mathbb{Z}\overrightarrow{\times}G,\gamma)$  is a universal group for $E$.

Now, for a strong $n$-perfect PEA $F=\mathcal E_n(G_1),$ where $G_1$ is a torsion-free directed po-group such that $\mathbb Z\lex G$ has (RDP)$_1,$ assume that $f^{\prime}:E\rightarrow \mathcal{E}_n({G_{1}})$ is a homomorphism between PEAs.  There exists a unique group homomorphism
$f_{1}:\mathbb{Z}\overrightarrow{\times}G\rightarrow\mathbb{Z}\overrightarrow{\times}G_{1}$
such that $f^{\prime}=f_{1}\circ\gamma$. Now, we define $f:G\rightarrow G_{1}$ as $f(g)=h$ iff $f_{1}(0,g)=(0,h)$ for any
$g\in G^{+},$ since $G$ is directed, it is routine to verify that
$f$ is a   homomorphism between $G$ and $G_{1}.$ By Theorem \ref{th:hom}, $f$ is a unique homomorphism between $G$ and $G_{1} $
such that $f(g)=h$ iff $f_{1}(0,g)=(0,h)$ for any $g\in G^{+},$
which implies that it is  also a unique homomorphism such that
$f^{\prime}=\mathcal{E}(f)\circ\gamma.$
\end{proof}

We define a functor
$\mathcal{P}_n:\mathcal{SPPEA}_{n}\rightarrow\mathcal{G}$ as follows: if $G\in \mathcal{G},$ let
$$
\mathcal{P}_n(\Gamma(\mathbb{Z}\overrightarrow{\times}G,(n,0))):=G,
$$
where $(\mathbb{Z}\overrightarrow{\times}G,\gamma)$ is a universal group of the strong $n$-perfect PEA
$\Gamma(\mathbb{Z}\overrightarrow{\times}G,(n,0)).$

\begin{theorem}\label{th:weper1}
The functor $\mathcal{P}_n$ is a left-adjoint of the functor $\mathcal{E}_n$.
\end{theorem}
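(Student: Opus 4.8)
The plan is to prove that $\mathcal{P}_n$ is left adjoint to $\mathcal{E}_n$ by exhibiting, for each strong $n$-perfect PEA $E$, a universal arrow $\eta_E\colon E\to \mathcal{E}_n(\mathcal{P}_n(E))$ in the sense recalled just before Theorem \ref{th:weper}; once such universal arrows are produced for every object $E$, the adjunction (together with the naturality of its unit) follows by the standard correspondence between universal arrows and adjoint functors. First I would invoke Theorem \ref{th:nperpog}: for a strong $n$-perfect PEA $E$ there is a unique torsion-free directed po-group $G$ with $\mathbb{Z}\lex G$ satisfying {\rm(RDP)}$_1$ and an isomorphism $E\cong \Gamma(\mathbb{Z}\lex G,(n,0))=\mathcal{E}_n(G)$. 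By the defining formula of $\mathcal{P}_n$ we have $\mathcal{P}_n(E)=G$, so I take $\eta_E\colon E\to \mathcal{E}_n(G)$ to be this isomorphism, which is precisely the corestriction of the universal measure $\gamma$ of Theorem \ref{th:uni} to its image $\Gamma(\mathbb{Z}\lex G,(n,0))$; on morphisms $\mathcal{P}_n$ acts by $f\mapsto \widehat f$ as furnished by Theorem \ref{th:hom}, which is what makes $\mathcal{P}_n$ a functor.

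Next I would check that $\eta_E$ is universal. Let $G'$ be an object of $\mathcal{G}$ and let $f\colon E\to \mathcal{E}_n(G')$ be a homomorphism of PEAs. Since $\eta_E$ is an isomorphism, $f\circ\eta_E^{-1}\colon \mathcal{E}_n(G)\to \mathcal{E}_n(G')$ is a PEA homomorphism between two objects in the image of $\mathcal{E}_n$; by the fullness of $\mathcal{E}_n$ (Theorem \ref{th:faifull}) there is a po-group homomorphism $\bar f\colon G\to G'$ with $\mathcal{E}_n(\bar f)=f\circ\eta_E^{-1}$, hence $\mathcal{E}_n(\bar f)\circ\eta_E=f$. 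Uniqueness of $\bar f$ is immediate from the faithfulness of $\mathcal{E}_n$: if some $\bar f'$ also satisfies $\mathcal{E}_n(\bar f')\circ\eta_E=f$, then $\mathcal{E}_n(\bar f')=\mathcal{E}_n(\bar f)$ and so $\bar f'=\bar f$. Thus $(\mathcal{P}_n(E),\eta_E)$ is an $\mathcal{E}_n$-universal arrow with domain $E$, which is exactly the condition required for $\mathcal{P}_n\dashv\mathcal{E}_n$.

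The essential work has already been carried out in the earlier results, so the remaining points are bookkeeping rather than genuine obstacles: one must confirm that the unit $\eta$ is natural in $E$ (which is automatic from the universal-arrow description, or can be read off from the naturality of the assignment $f\mapsto\widehat f$ of Theorem \ref{th:hom}) and that the induced bijection $\mathrm{Hom}_{\mathcal{G}}(\mathcal{P}_n(E),G')\cong\mathrm{Hom}_{\mathcal{SPPEA}_n}(E,\mathcal{E}_n(G'))$ is natural in $G'$ as well. The only place that requires real care is the identification of the unit with the corestriction of the universal measure $\gamma$ of Theorem \ref{th:uni}, i.e. the matching of the universal-group formulation (measures into arbitrary directed po-groups) with the present categorical formulation (PEA homomorphisms into objects $\mathcal{E}_n(G')$); this is already supplied by Theorem \ref{th:weper}, whose proof factors any $f\colon E\to\mathcal{E}_n(G')$ uniquely through $\gamma$. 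Alternatively, and perhaps most transparently, one may observe that Theorems \ref{th:nperpog}, \ref{th:hom} and \ref{th:faifull} together show that $\mathcal{E}_n$ is full, faithful and essentially surjective, hence an equivalence of categories with quasi-inverse $\mathcal{P}_n$, and an equivalence is in particular an adjunction in either direction, so that $\mathcal{P}_n$ is indeed a left adjoint of $\mathcal{E}_n$.
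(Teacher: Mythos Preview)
Your proof is correct and arrives at the same conclusion by the same organizing idea as the paper---exhibiting, for each $E\in\mathcal{SPPEA}_n$, an $\mathcal{E}_n$-universal arrow with domain $E$---so in spirit it matches the paper's one-line proof, which simply defers to Theorem~\ref{th:weper} and the definition of $\mathcal{P}_n$.

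There is a minor methodological difference worth noting. In the paper's route (via Theorem~\ref{th:weper}), the universality of $\eta_E=\gamma$ is extracted from the universal-group property of $(\mathbb{Z}\lex G,\gamma)$ established in Theorem~\ref{th:uni}: a PEA homomorphism $f'\colon E\to\mathcal{E}_n(G_1)$ is first lifted to a group homomorphism $f_1\colon\mathbb{Z}\lex G\to\mathbb{Z}\lex G_1$ and then stripped down to $f\colon G\to G_1$. Your argument instead verifies universality directly from the fullness and faithfulness of $\mathcal{E}_n$ (Theorem~\ref{th:faifull}), using that $\eta_E$ is an isomorphism. This is arguably cleaner and avoids the detour through the universal group; it also makes transparent your closing observation that $\mathcal{E}_n$ is an equivalence (anticipating Theorem~\ref{th:weper2}), whence $\mathcal{P}_n$ is automatically both a left and a right adjoint. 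Either route is perfectly adequate here; the paper's choice simply reuses the machinery already built in Theorems~\ref{th:uni} and~\ref{th:weper}.
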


\begin{proof}
It follows from Theorem \ref{th:weper} and the definition of  $\mathcal{P}_n.$
\end{proof}

Recall that a functor $\mathcal F$ from a category $\mathcal A$ into a category $\mathcal B$ is called a {\it categorical  equivalence} provided that it is full, faithful, and
isomorphism-dense in the sense that for any $\mathcal B$-object $B$ there exists some $\mathcal A$-object $A$ such that $\mathcal F(A)$ is isomorphic to $B,$ see \cite{AHG00}.

\begin{theorem}\label{th:weper2}
The functor $\mathcal{E}_n$ is a categorical equivalence of the category $\mathcal{G}$ of directed torsion-free  po-groups $G$ such that $\mathbb Z \lex G$ has {\rm (RDP)}$_1$ and the category
$\mathcal{SPPEA}_{n}$  of $n$-strong perfect pseudo-effect algebras.
\end{theorem}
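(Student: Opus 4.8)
The plan is to assemble the categorical equivalence directly from the two structural results already established, since by definition a functor is a categorical equivalence precisely when it is full, faithful, and isomorphism-dense. Two of these three properties have been proved outright. First I would invoke Theorem \ref{th:faifull}, which asserts that $\mathcal{E}_n$ is both faithful and full as a functor from $\mathcal{G}$ into $\mathcal{SPPEA}_{n}$. This disposes of the first two requirements with no further work, so the only remaining task is to verify isomorphism-density.

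Next I would establish that $\mathcal{E}_n$ is isomorphism-dense, i.e.\ that every object of $\mathcal{SPPEA}_{n}$ is isomorphic to $\mathcal{E}_n(G)$ for some object $G$ of $\mathcal{G}$. This is exactly the content of the representation Theorem \ref{th:nperpog}: given any strong $n$-perfect PEA $E$, that theorem produces a torsion-free directed po-group $G$ with $\mathbb{Z}\lex G$ satisfying {\rm (RDP)}$_1$ such that $E$ is isomorphic to $\Gamma(\mathbb{Z}\overrightarrow{\times}G,(n,0))$. The point to emphasize is that the group $G$ furnished by Theorem \ref{th:nperpog} is by construction torsion-free, directed, and has $\mathbb{Z}\lex G$ with {\rm (RDP)}$_1$, hence it is genuinely an object of the category $\mathcal{G}$. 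Since $\mathcal{E}_n(G)$ is defined to be $\Gamma(\mathbb{Z}\overrightarrow{\times}G,(n,0))$, the isomorphism $E\cong \mathcal{E}_n(G)$ is precisely what density demands.

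Combining these observations completes the argument: $\mathcal{E}_n$ is full and faithful by Theorem \ref{th:faifull} and isomorphism-dense by Theorem \ref{th:nperpog}, so it is a categorical equivalence between $\mathcal{G}$ and $\mathcal{SPPEA}_{n}$. I do not anticipate any serious obstacle here, because all of the genuine difficulty—constructing the functor, proving it is well defined, full, and faithful, and erecting the representation $E\cong \Gamma(\mathbb{Z}\overrightarrow{\times}G,(n,0))$ together with the uniqueness of $G$—has been carried out in the preceding theorems. The mild point requiring care is simply to confirm that the $G$ supplied by the representation theorem lands in $\mathcal{G}$ rather than in some larger class of po-groups, but this is immediate from the hypotheses recorded in Theorem \ref{th:nperpog}, so the proof reduces to citing the two earlier results and reading off the definition of categorical equivalence.
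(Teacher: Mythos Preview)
Your proposal is correct and follows essentially the same approach as the paper: both reduce the proof to isomorphism-density (with fullness and faithfulness coming from Theorem \ref{th:faifull}) and obtain the required $G$ from the representation of a strong $n$-perfect PEA as $\Gamma(\mathbb{Z}\lex G,(n,0))$. The only cosmetic difference is that the paper phrases the density step via the universal group $(\mathbb{Z}\lex G,\gamma)$ of Theorem \ref{th:uni}, whereas you cite Theorem \ref{th:nperpog} directly; since the isomorphism $\gamma$ in Theorem \ref{th:uni} is exactly the one produced in Theorem \ref{th:nperpog}, the two arguments are the same.
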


\begin{proof} It suffices to prove that for a strong $n$-perfect pseudo-effect algebra $E$, there is a torsion-free directed po-group  $G$ such that $\mathbb Z\lex G$ has (RDP)$_1$ and
such that $\mathcal{E}_n(G)$ is isomorphic to $E$. To show that, we take a universal group
$(\mathbb{Z}\overrightarrow{\times}G,\gamma)$. Then $\mathcal{E}_n(G)$ and $E$ are isomorphic.
\end{proof}

{\bf Acknowledgement:} The authors thank  for the support by SAIA,
n.o. (Slovak Academic Information Agency) and the Ministry of
Education, Science, Research and Sport of the Slovak Republic. This
work is also supported by National Science Foundation of China
(Grant No. 60873119),  and the Fundamental Research Funds for the
Central Universities (Grant No. GK200902047).

A.D. thanks  for the support by Center of Excellence SAS -~Quantum
Technologies~-,  ERDF OP R\&D Project
meta-QUTE ITMS 26240120022, the grant VEGA No. 2/0059/12 SAV and by
CZ.1.07/2.3.00/20.0051  and  MSM 6198959214.


\begin{thebibliography}{99}

\bibitem{AHG00} J. Ad\'{a}mek, H. Herrlich, G. E. Strecker,
{\it ``Abstract and Concrete Categories: The Joy of Facts",} Originally published by:
John Wiley and Sons, New York, 1990.
Republished in:
Reprints in Theory and Applications of Categories, No. 17 (2006) pp. 1--507.
http://www.tac.mta.ca/tac/reprints/articles/17/tr17.pdf



\vspace{-2mm}\bibitem{BDL93} L.P. Belluce, A. Di Nola,  A. Letieri,
Local MV-algebras,   {\it Rendiconti del Circolo Matematico di
Palermo} {\bf 42} (1993), 347--361.

\vspace{-2mm}\bibitem{BeFo97}   M. K. Bennett and D. J. Foulis,
Interval and scale effect algebras, {\it Advances in Applied Mathematics} {\bf19} (1997), 200--215.

\vspace{-2mm}\bibitem{209} D. Buhagiar,  E. Chetcuti,  A. Dvure\v censkij,
Loomis-Sikorski representation of  monotone $\sigma$-complete effect
algebras, {\it Fuzzy Sets and Systems} {\bf 157} (2006), 683--690.


\vspace{-2mm}\bibitem{DDJ} A. Di Nola,  A. Dvure\v{c}enskij, J. Jakub\'{i}k,
Good and bad inifinitesimals and states on pseudo MV-effect
algebras, {\it Order} {\bf 21} (2004), 293--314.

\vspace{-2mm}\bibitem{DDT} A. Di Nola,  A. Dvure\v{c}enskij, C. Tsinakis, On perfect GMV-algebras,
{\it Communications in Algebra} {\bf 36} (2008), 1221--1249.

\vspace{-2mm}\bibitem{DL94} A. Di Nola,  A. Lettieri, Perfect  MV-algebras are categorical
equivalent to Abelian $\ell$-groups, {\it Studia Logica} {\bf 53} (1994),
417--432 .

\vspace{-2mm}\bibitem{93}  A. Dvure\v censkij,    {\it  ``Gleason's Theorem and Its
Applications"}, Kluwer Academic Publisher,
Dordrecht/Boston/London, 1993, 325+xv pp.

\vspace{-2mm}\bibitem{Dv02}   A. Dvure\v{c}enskij,
Pseudo MV-algebras are intervals in $\ell$-group, {\it Journal of the
Australian Mathematical Society} {\bf 72} (2002), 427--445.


\vspace{-2mm}\bibitem{Dv03}   A. Dvure\v{c}enskij,
Ideals of pseudo-effect algebras and their applications, {\it Tatra
Mountains Mathematical Publications} {\bf27} (2003), 45--65.

\vspace{-2mm}\bibitem{Dv04}   A. Dvure\v{c}enskij,  States and radicals of pseudo-effect algebras,
{\it Atti del Seminario Matematico e Fisico dell'Universit\`{a} di
Modena} {\bf 52} (2004), 85--103.

\vspace{-2mm}\bibitem{Dv05} A. Di Nola,  A. Dvure\v censkij, M. Hy\v cko, C. Manara,
Entropy on  effect algebras with the Riesz decomposition property II: MV-algebras, {\it  Kybernetika} {\bf 41}  (2005), 161--176.

\vspace{-2mm}\bibitem{Dv07} A. Dvure\v{c}enskij,
Perfect effect algebras are categorically equivalent with Abelian
interpolation po-groups, {\it  Journal of the Australian
Mathematical Society} {\bf 82}  (2007), 183-207.

\vspace{-2mm}\bibitem{Dv08}   A. Dvure\v{c}enskij,  On $n$-perfect GMV-algebras,
{\it Journal of Algebra} {\bf 319} (2008), 4921--4946.

\vspace{-2mm}\bibitem{225} A. Dvure\v censkij, Cyclic elements and subalgebras of
GMV-algebras,  {\it Soft Computing} {\bf 14} (2010), 257--264.

\vspace{-2mm}\bibitem{DvPu00}  A. Dvure\v{c}enskij and  S. Pulmannov\'{a},
{\it ``New Trends in Quantum Structures",} Kluwer Academic  Publishers/Ister
Science, Dordrecht/Bratislava, 2000.

\vspace{-2mm}\bibitem{DvVe01a} A. Dvure\v{c}enskij and T. Vetterlein, Pseudoeffect
algebras. I. Basic properties, {\it International Journal of
Theoretical Physics} {\bf 40} (2001), 685--701.

\vspace{-2mm}\bibitem{DvVe01b} A. Dvure\v censkij, T. Vetterlein,
Pseudoeffect algebras. II. Group representation, {\it International Journal of Theoretical Physics} {\bf 40} (2001), 703--726.

\vspace{-2mm}\bibitem{DvVe02}   A. Dvure\v{c}enskij and T. Vetterlein,
Algebras in the positive cone of po-groups, {\it Order} {\bf 19} (2002),
127--146.

\vspace{-2mm}\bibitem{FoBe94}   D. J. Foulis and M. K. Bennett,
Effect algebras and unsharp quantum logics, {\it Foundations of
Physics} {\bf 24} (1994), 1325--1346.

\vspace{-2mm}\bibitem{Fuc} L. Fuchs, {\it ``Partially Ordered Algebraic Systems",} Pergamon Press, Oxford-New York, 1963.

\vspace{-2mm}\bibitem{GeIo01}   G. Georgescu and A. Iorgulescu,
Pseudo-MV algebras, {\it Multiple-Valued Logic: An International
Journal} {\bf 6} (2001), 95--135.


\vspace{-2mm}\bibitem{Gla} A.M.W. Glass, {\it ``Partially Ordered Groups",} World Scientific, Singapore, New-Jersey, London, Hong Kong, 1999.

\vspace{-2mm}\bibitem{Good86}  K. R. Goodearl,
{\it ``Partially Ordered Abelian Groups with Interpolation",}
Mathematical Surveys and Monographs No 20, American Mathematical
Society, Providence, Rhode Island, 1986.



\vspace{-2mm}\bibitem{Kalmach83} G. Kalmbach, {\it ``Orthomodular Lattices",}
Academic Press, London, 1983.

\vspace{-2mm}\bibitem{KoCh94}   F. K\^{o}pka and F. Chovanec, D-posets, {\it Mathematica
Slovaca} {\bf 44} (1994), 21--34.

\vspace{-2mm}\bibitem{Hajek03}   P. H\'{a}jek,
Observations on non-commutative fuzzy logic, {\it Soft Computing} {\bf 8}
(2003), 38--43.

\vspace{-2mm}\bibitem{Rac02} J. Rach\r{u}nek, A non-commutative  generalization of
MV-algebras,  {\it Czechoslovak Mathematical Journal} {\bf 52} (2002),
255--273.


\vspace{-2mm}\bibitem{Rie08} Z. Rie\v{c}anov\'{a}, Effect algebraic extensions of generalized
effect algebras and two-valued states,  {\it Fuzzy Sets and
Systems} {\bf159} (2008), 1116--1122.

\vspace{-2mm}\bibitem{RieMar05} Z. Rie\v{c}anov\'{a} and I. Marinov\'{a}, Generalized
homogeneous, prelattice and MV-effect algebras, {\it Kybernetika}
{\bf 41} (2005), 129--142.

\vspace{-2mm}\bibitem{XieLi10a}  Xie Yongjian and  Li Yongming, Riesz ideals in generalized
pseudo-effect algebra and their unitizations, {\it Soft Computing}
{\bf 14} (2010), 387--398.


\vspace{-2mm}\bibitem{XieLi11} Xie Yongjian,  Li Yongming, Guo Jiansheng, Ren Fang and Li Dechao, Weak commutative pseudo-effect algebras,
{\it International Journal of Theoretical Physics} {\bf 50} (2011),
1186--1197.

\vspace{-2mm}\bibitem{li08} Li Yongming, Structures of scale
generalized effect algebras and scale effect algebras, {\it Acta
Mathematica Sinica} {\bf 51} (2008), 863--876. (In Chinese).



\end{thebibliography}
\end{document}